\DeclarePairedDelimiter{\ceil}{\lceil}{\rceil}
\DeclarePairedDelimiter{\floor}{\lfloor}{\rfloor}
\newcommand{\R}{{\mathbb{R}}}
\newcommand{\N}{{\mathbb{N}}}
\newcommand{\<}{\langle}
\renewcommand{\>}{\rangle}
\newcommand{\abs}[1]{\left\vert#1\right\vert}
\newcommand{\xii}{{\abs{\xi}}}
\newcommand{\supp}{{\mathrm{\,supp\,}}}
\def\<#1\>{\left\langle#1\right\rangle }
\renewcommand{\doteq}{{\,\mathrm{:=}\,}}
\newcommand{\eps}{\varepsilon}
\newcommand{\lin}{{\mathrm{lin}}}
\renewcommand{\hom}{{\mathrm{hom}}}
\newcommand{\sca}{{\mathrm{sc}}}
\theoremstyle{plain}
\newtheorem{theorem}{Theorem}
\newtheorem*{theorem*}{Theorem}
\newtheorem{proposition}{Proposition}[section]
\newtheorem{lemma}{Lemma}[section]
\newtheorem{corollary}{Corollary}[section]
\theoremstyle{definition}
\newtheorem{Not}{Notation}
\theoremstyle{remark}
\newtheorem{remark}{Remark}[section]
\title[The semilinear fractional diffusive equation]{The critical exponent(s) for the semilinear fractional diffusive equation}
\author{Marcello D'Abbicco, Marcelo Rempel Ebert, Tiago Henrique Picon}
\begin{document}

\baselineskip14pt

\begin{abstract}
In this paper we show that there exist two different critical exponents for global small data solutions to the semilinear fractional diffusive equation
\[ \begin{cases}
\partial_t^{1+\alpha} u -\Delta u = |u|^p,& t\geq 0,\ x\in\R^n,\\
u(0,x)=u_0(x), & x\in\R^n,\\
u_t(0,x)=u_1(x) & x\in\R^n,
\end{cases} \]
where~$\alpha\in(0,1)$, and~$\partial_t^{1+\alpha}u$ is the Caputo fractional derivative in time. The second critical exponent appears if the second data is assumed to be zero. This peculiarity is related to the fact that the order of the equation is fractional, and so the role played by the second data~$u_1$ becomes ``unnatural'' as~$\alpha$ decreases to zero. To prove our result, we first derive $L^r-L^q$ linear estimates, $1\leq r\leq q\leq\infty$, for the solution to the linear Cauchy problem, where~$|u|^p$ is replaced by~$f(t,x)$, and then we apply a contraction argument.
\end{abstract}

\keywords{Semilinear partial differential equations, Caputo derivative, Mittag-Leffler functions, fractional derivatives, critical exponents, small data global solutions}

\subjclass[2010]{Primary 35R11; Secondary 35A01, 35B33}

\maketitle

\section{Introduction}\label{sec:intro}

We consider the ``Cauchy type'' problem for the semilinear fractional diffusive equation
\begin{equation}\label{eq:CP}
\begin{cases}
\partial_t^{1+\alpha} u -\Delta u = |u|^p,\qquad t>0,\ x\in\R^n,\\
u(0,x)=u_0(x),\\
u_t(0,x)=u_1(x),
\end{cases}
\end{equation}
where~$\alpha\in(0,1)$ and~$p>1$. Here~$\partial_t^{1+\alpha}u$ is the Caputo fractional derivative of order $1+\alpha$ with respect to~$t$ of~$u(t,x)$, defined by
\begin{equation}\label{eq:caputo}
\partial_t^{j+\alpha} u (t,x) \doteq J^{1-\alpha} (\partial_t^{j+1} u) (t,x)
\end{equation}
for any~$j\in\N$ and~$\alpha\in(0,1)$, where
\begin{equation}\label{eq:RLint}
J^{\beta}f(t) \doteq \frac1{\Gamma(\beta)}\,\int_0^t (t-s)^{\beta-1}\,f(s)\,ds,\qquad t>0,
\end{equation}
is the Riemann-Liouville fractional integral operator~\cite{R}, defined for~$\Re(\beta)>0$. Here~$\Gamma$ is the Euler Gamma function. %Here and in the following, we assume that~$J^\beta f(0)=0$ (for any~$f$ smooth at~$t=0$).
In this paper, we write Cauchy problem instead of ``Cauchy type'' problem, for the sake of brevity.

The semilinear fractional diffusive equation given in~\eqref{eq:CP} interpolates the semilinear heat equation formally obtained at $\alpha=0$ and the semilinear wave equation obtained at~$\alpha=1$. However, as one may expect, the role played by the second data~$u_1$ quickly becomes ``unnatural'' as~$\alpha$ decreases to zero.

The fundamental role played by the second data in influencing the critical exponent for global small data solutions to~\eqref{eq:CP} is a very peculiar effect, which is related to the fact that the order of the equation is fractional. One of the main motivation of our paper is to show and discuss this peculiarity.

By global small data solutions, we mean that for sufficiently small data with respect to some norm, the (unique) solution to~\eqref{eq:CP} is global in time. By critical exponent we mean the power~$\bar p$ such that small data global solutions exist in the supercritical range~$p>\bar p$ (possibly with a bound from above on~$p$), and no global solution exists in the subcritical range~$p\in(1,\bar p)$, under suitable sign assumption on the data.

Assuming small data in~$L^1\cap L^p$, we will prove global existence of the solution to~\eqref{eq:CP}, if~$p\geq\bar p$, where
\begin{equation}\label{eq:pcrit}
\bar p \doteq 1+\frac{2}{n-2(1+\alpha)^{-1}}.
\end{equation}
On the other hand, if the second data~$u_1$ is assumed to be zero and~$u_0$ is small in~$L^1\cap L^p$, then global existence of the solution to~\eqref{eq:CP} holds if~$p>\tilde p$, where
\begin{equation}\label{eq:pcrit0}
\tilde p \doteq 1+\frac{2}{n-2+2(1+\alpha)^{-1}}.
\end{equation}
These two critical exponents are justified by scaling arguments and the nonexistence counterpart result is proved in~\cite{DA+} (see Section~\ref{sec:scaling}).

Even if our interest in~\eqref{eq:CP} is mainly motivated by the mathematical effect on the critical exponent appearing for an equation with fractional order, fractional integrals and derivatives are not just a pure mathematical tool, chosen to study new effects which do not appear in equations with integer order. Fractional integrals and derivatives appear in several models in different areas of science as Biology, Engineering, Mathematical Physics, Medicine with current and unsaturated field survey. The probably most charming aspect of fractional differentiation for the real-world applications is that systems containing fractional derivatives ``keep memory of the past'', and this is a highly desirable property.

We refer to~\cite{KST} for an introduction on the theory of fractional derivatives and to \cite{AMS, FDLS, KL, R2, MT, MMOC} to illustrate some applications.

If $u_{1}\equiv0$ then the solution to~\eqref{eq:CP} may be found solving an integrodifferential equation that is a particular type of Volterra integral equations~\cite{B} (see Section~\ref{sec:caputo}). This problem, that represents the heat conductor model with memory~\cite{GP, M}, was originally studied by Y. Fujita~\cite{F} for $n=1$ (see also~\cite{SW}). Fujita's method produces an handle representation of solution via multiplier operators and it have been used to study~\eqref{eq:CP} in several directions, see \cite{AF, AV, DAEPproc, HC}.

\subsection{A brief story of critical exponents for heat and wave equations}

In his pioneering paper~\cite{Fujita}, H. Fujita consider the Cauchy problem for the semilinear heat equation
\begin{equation}\label{eq:Fujita}
\begin{cases}
\partial_t u -\Delta u = |u|^p,\qquad t\geq0,\ x\in\R^n,\\
u(0,x)=u_0(x),
\end{cases}
\end{equation}
and proved that the power exponent $\tilde p=1+2/n$ is \emph{critical.} In particular, he derived global existence of small data classical solutions in the supercritical range~$p>\tilde p$, and a finite time blow up behavior of solutions in the subcritical range~$p\in(1,\tilde p)$. A blow up result for $p=\tilde p$ has been proved in \cite{Hayakawa, Kobayashietal}. In presence of nonlinear memory terms, that is, when the power nonlinearity~$|u|^p$ is replaced by~$J^\beta(|u|^p)$ in~\eqref{eq:Fujita} (here~$J^\beta$ is as in~\eqref{eq:RLint}), the critical exponent has been obtained in~\cite{CDW} (see later, Section~\ref{sec:scaling}).

On the other hand, the nonexistence of global generalized solutions to the Cauchy problem for the semilinear wave equation
\begin{equation}\label{eq:CPwave}
\begin{cases}
\partial_t^{2} u -\Delta u = |u|^p,\qquad t\geq0,\ x\in\R^n,\\
u(0,x)=u_0(x),\\
u_t(0,x)=u_1(x),
\end{cases}
\end{equation}
has been proved for $1<p<\bar p$, where~$\bar p=1+2/(n-1)$ (and for any~$p>1$ if~$n=1$) by Kato~\cite{K}. However, the critical exponent for~\eqref{eq:CPwave} is known to be~$p_0(n)$, the positive root of the quadratic equation
\begin{equation}\label{eq:strauss}
(n-1)p^2-(n+1)p-2=0,
\end{equation}
as conjectured by Strauss~\cite{Strauss}, after that John proved it in space dimension~$n=3$~\cite{J}. Several authors studied the problem in different space dimension, finding blow-up in finite time for a suitable choice of initial data in the subcritical range~\cite{G, JZ, Sc, Si, YZ06}), and global existence of small data solutions in the supercritical range~\cite{EKP, GLS, G2, T, Z}.

The critical exponent of the Cauchy problem for the semilinear wave equation becomes Fujita exponent~$1+2/n$ if a damping term~$u_t$ is added to the equation in~\eqref{eq:CPwave} (see~\cite{IMN04, N04, TY01, Z01}). This effect is a consequence of the diffusion phenomenon: the asymptotic profile as~$t\to\infty$ of the solution to the damped wave is described by the solution to a heat equation. The situation remains the same if the damping term~$b(t)u_t$ is added to the equation in~\eqref{eq:CPwave}, for a quite large class of coefficients~$b(t)$ (see~\cite{DA15, DALR13, LNZ12, W14}). However, an interesting transition model has been found and studied in~\cite{DAL15, DALR15}: if~$b(t)=2/(1+t)$, the critical exponent is given by~$1+2/n$ if~$n=1,2$ and by~$p_0(n+2)$ if~$n\geq3$, is odd (here~$p_0$ is as in~\eqref{eq:strauss}).

\subsection{Results}

Having in mind our plan to apply a contraction argument to prove the global existence of small data solutions to~\eqref{eq:CP}, we first derive sharp $L^r-L^q$ estimates, with~$1\leq r\leq q\leq\infty$, for the solution to the linear problem:
\begin{equation}\label{eq:CPlinear}
\begin{cases}
\partial_t^{1+\alpha} u -\Delta u = f(t,x),\qquad t>0,\ x\in\R^n,\\
u(0,x)=u_0(x),\\
u_t(0,x)=u_1(x).
\end{cases}
\end{equation}
Therefore, our first result is the following.
\begin{theorem}\label{thm:linear}
Let~$n\geq1$ and~$q\in[1,\infty]$. Assume that~$u_0\in L^{r_0}$, $u_1\in L^{r_1}$, and that~$f(t,\cdot)\in L^{r_2}$, with~$r_j\in[1,q]$, satisfying
\begin{equation}\label{eq:r}
\frac{n}2\left(\frac1{r_j}-\frac1q\right) < 1,
\end{equation}
for~$j=0,1,2$. Assume that
\begin{equation}\label{eq:fgood}
\|f(t,\cdot)\|_{L^{r_2}}\leq K\,(1+t)^{-\eta}, \qquad \forall t\geq0,
\end{equation}
for some~$K>0$ and~$\eta\in\R$. Then the solution to~\eqref{eq:CPlinear} verifies the following estimate:
\begin{align*}
\|u(t,\cdot)\|_{L^q}
    & \leq C\, t^{-\frac{n(1+\alpha)}2\left(\frac1{r_0}-\frac1q\right)}\, \|u_0\|_{L^{r_0}}+ C\,t^{1-\frac{n(1+\alpha)}2\left(\frac1{r_1}-\frac1q\right)}\, \|u_1\|_{L^{r_1}}\\
    & \qquad + \begin{cases}
    CK\,(1+t)^{\alpha-\frac{n(1+\alpha)}2\left(\frac1{r_2}-\frac1q\right)} & \text{if~$\eta>1$,} \\
    CK\,(1+t)^{\alpha-\frac{n(1+\alpha)}2\left(\frac1{r_2}-\frac1q\right)}\,\log(1+t) & \text{if~$\eta=1$,} \\
    CK\,(1+t)^{1-\eta+\alpha-\frac{n(1+\alpha)}2\left(\frac1{r_2}-\frac1q\right)} & \text{if~$\eta<1$,}
    \end{cases}
\end{align*}
for any~$t>0$, where~$C$ does not depend on the data.
\end{theorem}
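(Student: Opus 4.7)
The plan is to derive an explicit Fourier representation of the solution to~\eqref{eq:CPlinear} and then to bound the resulting Fourier multipliers in $L^{r_j}\to L^q$. Taking the partial Fourier transform in $x$ reduces the problem to the fractional ODE $\partial_t^{1+\alpha}\hat u + |\xi|^2 \hat u = \hat f$ in $t$, parametrized by $\xi$, with data $\hat u_0(\xi),\hat u_1(\xi)$. Solving this via Laplace transform in $t$ (or directly from Mittag--Leffler theory) yields
\[
\hat u(t,\xi) = E_{1+\alpha,1}(-|\xi|^2 t^{1+\alpha})\,\hat u_0(\xi) + t\,E_{1+\alpha,2}(-|\xi|^2 t^{1+\alpha})\,\hat u_1(\xi) + \int_0^t (t-s)^\alpha E_{1+\alpha,1+\alpha}(-|\xi|^2 (t-s)^{1+\alpha})\,\hat f(s,\xi)\,ds,
\]
so the solution decomposes as $K_0(t,D)u_0 + K_1(t,D)u_1 + \int_0^t K_f(t-s,D)f(s,\cdot)\,ds$ for suitable Fourier multipliers built from Mittag--Leffler functions.

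The core analytical step is to prove the scaling-consistent $L^r\to L^q$ multiplier estimates
\[
\|K_0(t,D)g\|_{L^q}\lesssim t^{-\frac{n(1+\alpha)}{2}\left(\frac1r-\frac1q\right)}\|g\|_{L^r},\qquad \|K_1(t,D)g\|_{L^q}\lesssim t^{1-\frac{n(1+\alpha)}{2}\left(\frac1r-\frac1q\right)}\|g\|_{L^r},
\]
\[
\|K_f(\tau,D)g\|_{L^q}\lesssim \tau^{\alpha-\frac{n(1+\alpha)}{2}\left(\frac1r-\frac1q\right)}\|g\|_{L^r},
\]
under hypothesis~\eqref{eq:r}. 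The natural scaling $|\xi|^2 t^{1+\alpha}\mapsto 1$ suggests splitting each symbol dyadically at $|\xi|\sim t^{-(1+\alpha)/2}$. On the low-frequency piece the Mittag--Leffler symbol is smooth and essentially constant, so estimating the $L^1\cap L^\infty$ norm of the kernel and applying Young's convolution inequality gives the claimed bound. On the high-frequency piece one exploits the algebraic decay $E_{1+\alpha,\beta}(-r)=O(1/r)$ as $r\to+\infty$ (the principal difference from the parabolic case $\alpha=0$, where decay is exponential), handling the multiplier by Mikhlin--H\"ormander or Hardy--Littlewood--Sobolev type arguments; condition~\eqref{eq:r} is precisely what keeps the relevant integrals convergent.

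Once the kernel bounds are established, the $u_0$ and $u_1$ contributions in the theorem follow immediately. For the source term, Duhamel combined with~\eqref{eq:fgood} reduces the estimation to
\[
\int_0^t (t-s)^{\alpha-\frac{n(1+\alpha)}{2}\left(\frac1{r_2}-\frac1q\right)}(1+s)^{-\eta}\,ds.
\]
I would split this integral at $s=t/2$. On $[0,t/2]$ the $(t-s)$-factor is essentially $t^{\alpha-\frac{n(1+\alpha)}{2}(1/r_2-1/q)}$, so the $s$-integral reduces to $\int_0^{t/2}(1+s)^{-\eta}\,ds$ and produces the three cases with tails $1$ if $\eta>1$, $\log(1+t)$ if $\eta=1$, and $(1+t)^{1-\eta}$ if $\eta<1$. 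On $[t/2,t]$ the factor $(1+s)^{-\eta}$ is essentially $(1+t)^{-\eta}$ and the remaining integral is bounded by $t^{1+\alpha-\frac{n(1+\alpha)}{2}(1/r_2-1/q)}$, which is dominated by the contribution of the first case; for small $t$ all bounds are $O(1)$, justifying the replacement of $t$ by $1+t$ in the final estimate.

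The main obstacle will be the kernel estimates themselves. Unlike in the parabolic case, the Mittag--Leffler multipliers fail to decay exponentially, so classical heat-kernel techniques do not transfer directly; one must combine the full asymptotic expansions of $E_{1+\alpha,\beta}$ with a careful low--high frequency cutoff and track the scaling exactly. The strict inequality in~\eqref{eq:r} turns out to be precisely the sharp condition: it both guarantees $L^{r_j}\to L^q$ boundedness of the high-frequency piece and makes the Duhamel integrand integrable near $s=t$. In particular the endpoint $r=1,q=\infty$ (for $n\geq 2$) is excluded and would require a separate argument.
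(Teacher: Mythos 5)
Your overall architecture matches the paper's: Fourier representation via Mittag--Leffler functions, $L^r\to L^q$ bounds for the three kernels, Young's inequality combined with the scaling property, and a Duhamel integral split at $s=t/2$ (this is exactly the paper's Lemma~\ref{lem:integral}, applied with $a=\frac{n(1+\alpha)}2\left(\frac1{r_2}-\frac1q\right)-\alpha$ and $b=\eta$; your case analysis and the role of \eqref{eq:r} in making $a<1$ are correct). The gap is in the kernel estimates, which you defer to ``Mikhlin--H\"ormander or Hardy--Littlewood--Sobolev type arguments'' on the high-frequency piece. Neither tool yields the claimed bounds over the full range $1\leq r\leq q\leq\infty$ permitted by \eqref{eq:r}: Mikhlin--H\"ormander gives only $L^p\to L^p$ for $1<p<\infty$ and says nothing about $r<q$ or the endpoints $r=1$, $q=\infty$, both of which the theorem covers (and which are essential in the nonlinear application, where one takes $r_2=1$); Hardy--Littlewood--Sobolev applies only when $\frac1r-\frac1q$ equals the exact homogeneity $\frac{2}{n}$ of the leading high-frequency behavior $\sim|\xi|^{-2}t^{-(1+\alpha)}$, and again excludes the endpoints. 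Moreover, the asymptotic $E_{1+\alpha,\beta}(-r)=O(1/r)$ alone cannot close the argument: for $\beta=2$ the expansion of $E_{1+\alpha,2}$ contains a genuine Riesz potential term $|\xi|^{-2}/\Gamma(2-1/\rho)$, whose inverse Fourier transform is not in any $L^m$ globally; only after the high-frequency localization does it behave like a Bessel potential and land in the $L^m$ with $1+\frac1q=\frac1r+\frac1m$ that Young's inequality requires.

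What the paper supplies at this point, and what your sketch is missing, is the Popov--Sedletskii representation (Theorem~\ref{thm:ML}): it writes $E_{1+\alpha,\beta}(-t^{1+\alpha}|\xi|^2)$ as an exponentially damped oscillatory term, a finite sum of negative powers of $|\xi|$, and a superposition $\int_0^\infty w(s)\,e^{-st^{1+\alpha}|\xi|^{2\rho}}\,ds$ of sub-Gaussian symbols. Each piece is then estimated pointwise in $x$ by repeated integration by parts (Lemmas~\ref{lem:H}--\ref{lem:Htilde3}), giving $|x|^{-(n+1)}$-type decay and hence membership in the required $L^m$; one must also check that the weight $w(s)$ is integrable against the $s$-dependence of those $L^m$ norms, which is where the separate treatments of $\beta=1,1/\rho,2$ and the small-$s$/large-$s$ splitting enter. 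Without this (or an equivalent explicit kernel analysis), your high-frequency step does not establish the stated $L^{r_j}\to L^q$ estimates, so the proof as proposed is incomplete at its central point.
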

\begin{remark}
In particular, problem~\eqref{eq:CPlinear} is well-posed in~$L^q$ if~$f(t,\cdot)\in L^q$, and~\eqref{eq:fgood} holds for some~$\eta$ and for~$r_2=q$, since
\begin{equation}\label{eq:wp}
\|u(t,\cdot)\|_{L^q}\leq C\bigl(\|u_0\|_{L^q}+t\|u_1\|_{L^q}\bigr)+C_T\,K,
\end{equation}
for any~$t\in[0,T]$, for some~$C_T>0$.
\end{remark}
Using~\eqref{eq:wp} for~$T=1$, and applying Theorem~\ref{thm:linear} with~$r_j=1$ if~$q<1+2/(n-2)$, or~$r_j$ defined by
\[ \frac{n}2\left(\frac1{r_j}-\frac1q\right)= 1 - \frac\delta{1+\alpha}, \]
otherwise, for~$j=0,1,2$, for any~$t\geq1$, and for some~$\delta>0$, one derives the following, immediate corollary of Theorem~\ref{thm:linear}, by standard contraction and prolongation arguments for linear problems.
\begin{corollary}\label{cor:linear}
Assume that~$u_0,u_1\in L^1\cap L^p$, for some~$p\in[1,\infty]$, and that~\eqref{eq:fgood} holds for any~$r_2\in[1,p]$, for some~$\eta>1$. Then there exists a unique solution~$u\in\mathcal C([0,\infty),L^1\cap L^p)$ to \eqref{eq:CPlinear}, and, for any~$\delta>0$, it satisfies the following estimate
\begin{equation}
\label{eq:decayulin}
\|u(t,\cdot)\|_{L^q} \leq C\,(1+t)^{-\beta_q}\,\bigl(\|u_0\|_{L^1}+\|u_0\|_{L^q}+t\,\|u_1\|_{L^1}+t\|u_1\|_{L^q}+(1+t)^\alpha\,K\bigr), \quad \forall \, q\in[1,p], \ \forall t\geq0,
\end{equation}
where
\begin{equation}\label{eq:betaq}
\beta_q \doteq \min \left\{ \frac{n}2(1+\alpha)\left(1-\frac1q\right), 1+\alpha-\delta \right\}.
\end{equation}
\end{corollary}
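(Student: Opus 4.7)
The plan is to follow the outline given immediately after the statement: combine the short-time well-posedness bound \eqref{eq:wp} with the long-time decay from Theorem~\ref{thm:linear}, after a judicious choice of the exponents $r_j$. Existence of $u\in\mathcal{C}([0,T],L^1\cap L^p)$ on each bounded $[0,T]$ is obtained from the construction underlying Theorem~\ref{thm:linear}, applied with $q=1$ and with $q=p$ (both permitted since $u_0,u_1\in L^1\cap L^p$ and \eqref{eq:fgood} is available for every $r_2\in[1,p]$); uniqueness follows from \eqref{eq:wp} by linearity, and prolongation yields a global solution.

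The short-time regime $t\in[0,1]$ is dispatched directly by \eqref{eq:wp} with $T=1$: on this interval the weight $(1+t)^{-\beta_q}$ and the factor $(1+t)^\alpha$ appearing in \eqref{eq:decayulin} are bounded above and below, so \eqref{eq:wp} absorbs into \eqref{eq:decayulin} up to a multiplicative constant.

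The core work is the long-time regime $t\geq 1$, where I would split into two cases. If $q<1+2/(n-2)$ (automatic when $n\leq 2$), then $\frac{n}{2}(1-1/q)<1$, so the choice $r_j=1$ for $j=0,1,2$ satisfies \eqref{eq:r}, and since $\eta>1$ Theorem~\ref{thm:linear} yields decay at rate $\gamma\doteq\frac{n(1+\alpha)}{2}(1-1/q)\geq\beta_q$ on all three contributions; since $t\simeq 1+t$ for $t\geq 1$ this is exactly \eqref{eq:decayulin}. If instead $q\geq 1+2/(n-2)$, I would pick $r_j$ by solving $\frac{n}{2}(\frac{1}{r_j}-\frac{1}{q})=1-\delta/(1+\alpha)$; the regime hypothesis together with $\delta>0$ ensures $r_j\in[1,q]\subset[1,p]$, the constraint \eqref{eq:r} is built in, and the decay rate delivered by Theorem~\ref{thm:linear} is exactly $1+\alpha-\delta=\beta_q$. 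Since $r_j\in[1,q]$, log-convexity of $L^r$ norms bounds $\|u_0\|_{L^{r_j}}$ by $\|u_0\|_{L^1}+\|u_0\|_{L^q}$ and similarly for $u_1$, while the source contribution is controlled directly by \eqref{eq:fgood} with the chosen $r_2$.

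There is no real obstacle here, only careful bookkeeping: the delicate points are checking that the selected $r_j$ actually lies in $[1,q]$ in each regime (which is precisely where the regime assumption and $\delta>0$ are used) and matching the three decay factors produced by Theorem~\ref{thm:linear} against the three groups of terms in \eqref{eq:decayulin}.
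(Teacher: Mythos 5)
Your proposal is correct and follows essentially the same route the paper sketches in the paragraph preceding the corollary: \eqref{eq:wp} with $T=1$ for small times, and Theorem~\ref{thm:linear} for $t\geq1$ with $r_j=1$ when $\frac{n}{2}(1-\frac1q)<1$ and otherwise $r_j$ chosen so that $\frac{n}{2}(\frac1{r_j}-\frac1q)=1-\frac{\delta}{1+\alpha}$, combined with interpolation of the $L^{r_j}$ norms between $L^1$ and $L^q$. The bookkeeping you describe (the decay rate matching $\beta_q$ in each regime and the membership $r_j\in[1,q]$) is exactly what the authors leave implicit.
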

\begin{remark}
Taking a sufficiently small~$\delta>0$, one may choose
\begin{equation}\label{eq:betaqgood}
\beta_q=\frac{n}2(1+\alpha)\left(1-\frac1q\right)
\end{equation}
in~\eqref{eq:betaq}, provided that~$q\neq\infty$ if~$n=2$ and~$q<1+2/(n-2)$ if~$n\geq3$.
\end{remark}
\begin{remark}\label{rem:decayhom}
The decay rate for~$\|u(t,\cdot)\|_{L^q}$ in~\eqref{eq:decayulin} is given by~$(1+t)^{1-\beta_q}$, provided that~$u_1$ is nontrivial. If~$u_1$ identically vanishes, and~$f$ is nontrivial in~\eqref{eq:fgood} with~$\eta>1$, then the decay rate is given by~$(1+t)^{\alpha-\beta_q}$. This latter is worse than the decay rate for~\eqref{eq:CPlinear}, in the case in which both~$u_1$ and~$f$ identically vanish. This phenomenon is related once again to the fractional order of integration.
\end{remark}
Theorem~\ref{thm:linear} is the key tool to prove the following small data global existence results. For the sake of simplicity, we will assume~$p<1+2/(n-2)$, so that one may assume~$\beta_q$ as in~\eqref{eq:betaqgood} for any~$q\in[1,p]$ (see later, Section~\ref{sec:Linf}).
\begin{theorem}\label{thm:10}
Let~$n\geq2$ and $p\geq\bar p$, in~\eqref{eq:CP}, with~$\bar p$ as in~\eqref{eq:pcrit}. Moreover, let~$p<1+2/(n-2)$ if~$n\geq3$. Then there exists~$\eps>0$ such that for any~$u_0,u_1\in L^1\cap L^p$, satisfying
\begin{align}
\label{eq:u0eps}
\|u_0\|_{L^1\cap L^p}
    & \doteq \|u_0\|_{L^1}+\|u_0\|_{L^p}\leq \eps,\\
\label{eq:u1eps}
\|u_1\|_{L^1\cap L^p}
    & \doteq \|u_1\|_{L^1}+\|u_1\|_{L^p}\leq \eps,
\end{align}
there exists a unique global solution
\begin{equation}\label{eq:solution0}
u\in\mathcal C([0,\infty),L^1\cap L^p)
\end{equation}
to~\eqref{eq:CP}. Moreover, the solution satisfies the decay estimate
\begin{equation}
\label{eq:decayu1}
\|u(t,\cdot)\|_{L^q} \leq C\,(1+t)^{1-\frac{n}2(1+\alpha)\left(1-\frac1q\right)}\,\bigl(\|u_0\|_{L^1\cap L^p}+\|u_1\|_{L^1\cap L^p}\bigr), \qquad \forall \, q\in[1,p], \ \forall t\geq0,
\end{equation}
where~$C>0$ does not depend on the data.
\end{theorem}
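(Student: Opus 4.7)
The plan is a contraction argument in a weighted-in-time Banach space whose norm is designed to recover the decay in \eqref{eq:decayu1}. Set $\beta_q \doteq \tfrac{n(1+\alpha)}{2}(1-\tfrac{1}{q})$, as in \eqref{eq:betaqgood} (permitted here because $p<1+2/(n-2)$), and define
\[
\|u\|_X \doteq \sup_{t\geq 0}\Bigl[(1+t)^{-1}\|u(t,\cdot)\|_{L^1} + (1+t)^{\beta_p-1}\|u(t,\cdot)\|_{L^p}\Bigr].
\]
Interpolation between $L^1$ and $L^p$, combined with the identity $\theta\beta_p=\beta_q$ (where $\theta$ is the interpolation parameter for the pair $(L^1,L^p)$ mapping to $L^q$), yields $\|u(t,\cdot)\|_{L^q}\leq \|u\|_X (1+t)^{1-\beta_q}$ for every $q\in[1,p]$. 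Let $X$ be the Banach space of $u\in\mathcal C([0,\infty);L^1\cap L^p)$ with $\|u\|_X<\infty$, and $B_M\subset X$ the closed ball of radius $M$.

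Define $\Phi(u)$ as the solution of \eqref{eq:CPlinear} with data $u_0,u_1$ and source $f=|u|^p$. For $u\in B_M$ the source satisfies $\||u(s,\cdot)|^p\|_{L^1} = \|u(s,\cdot)\|_{L^p}^p \leq M^p (1+s)^{-\eta}$ with $\eta\doteq p(\beta_p-1)$, so \eqref{eq:fgood} holds with $r_2=1$ and $K=M^p$. A direct algebraic manipulation of \eqref{eq:pcrit} shows that $p=\bar p$ is equivalent to $\eta=\alpha$; consequently, $p\geq\bar p$ translates into $\eta\geq\alpha$. For $p$ close to $\bar p$ one has only $\eta\in[\alpha,1)$, so the cleaner $\eta>1$ framework of Corollary~\ref{cor:linear} is not available for the source term, and I will apply Theorem~\ref{thm:linear} directly with $r_0=r_1=r_2=1$ (condition \eqref{eq:r} is ensured by $p<1+2/(n-2)$ and $q\in[1,p]$).

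The data-only part of $\Phi(u)$ contributes $C\eps(1+t)^{1-\beta_q}$ to $\|\Phi(u)(t,\cdot)\|_{L^q}$ by Corollary~\ref{cor:linear}. For the source part I split into the three cases of Theorem~\ref{thm:linear}: if $\eta>1$ the bound is $CM^p(1+t)^{\alpha-\beta_q}\leq CM^p(1+t)^{1-\beta_q}$ since $\alpha<1$; if $\eta=1$ the logarithmic factor is absorbed into $(1+t)^\delta$ for any $\delta\in(0,1-\alpha)$; if $\alpha\leq\eta<1$ the bound $CM^p(1+t)^{1-\eta+\alpha-\beta_q}$ is $\leq CM^p(1+t)^{1-\beta_q}$ precisely because $\eta\geq\alpha$. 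In all cases $\|\Phi(u)\|_X\leq C\eps+CM^p$, and choosing $M$ small and $\eps\leq M/(2C)$ gives $\Phi(B_M)\subset B_M$.

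For the contraction step, the inequality $||u|^p-|v|^p|\leq p(|u|+|v|)^{p-1}|u-v|$ together with H\"older's inequality with exponents $p/(p-1)$ and $p$ yields
\[
\||u|^p-|v|^p\|_{L^1}\leq C(\|u\|_{L^p}^{p-1}+\|v\|_{L^p}^{p-1})\|u-v\|_{L^p},
\]
which has the same time profile as $\||u|^p\|_{L^1}$ but with one factor of $M$ replaced by $\|u-v\|_X$. Repeating the three-case analysis gives $\|\Phi(u)-\Phi(v)\|_X\leq 2CM^{p-1}\|u-v\|_X$, a strict contraction for $M$ small. Banach's fixed-point theorem then produces a unique $u\in B_M$ solving \eqref{eq:CP}; the decay \eqref{eq:decayu1} is read off from $\|u\|_X\leq C\eps$, and uniqueness in the class \eqref{eq:solution0} follows by the standard argument that any other such solution lies in some $B_{M'}$. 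The \emph{main obstacle} is the delicate identification $\eta=\alpha$ at $p=\bar p$, which forces one to exploit the $\eta<1$ branch of Theorem~\ref{thm:linear}; a naive approach using only the $\eta>1$ case would yield only the strictly larger threshold $\tfrac{n(1+\alpha)+2}{n(1+\alpha)-2}$.
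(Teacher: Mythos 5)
Your proposal is correct and follows essentially the same route as the paper: the same weighted space $X$ with weights $(1+t)^{-1}$ and $(1+t)^{\beta_p-1}$, interpolation to get the $L^q$ decay, and an application of Theorem~\ref{thm:linear} with $r_2=1$ to the source $|u|^p$, where the identification $\eta=\frac{n}{2}(1+\alpha)(p-1)-p\geq\alpha\iff p\geq\bar p$ is exactly the paper's key step for absorbing the $\eta<1$ branch into the rate $(1+t)^{1-\beta_q}$. The only cosmetic differences are that you invoke Corollary~\ref{cor:linear} for the data part where the paper redoes the $r_0=p$ versus $r_0=1$ switch by hand, and that you spell out the contraction estimate that the paper omits as analogous.
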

\begin{remark}\label{rem:decay1}
The decay rate in~\eqref{eq:decayu1} is the same as the decay rate of the linear problem, in~\eqref{eq:decayulin}, provided that~$u_1$ is non-trivial (see Remark~\ref{rem:decayhom}).
\end{remark}
If the second data~$u_1$ is zero, then the statement of Theorem~\ref{thm:10} may be improved.
\begin{theorem}\label{thm:00}
Let~$n\geq1$, $u_1=0$ and~$p>\tilde p$ in~\eqref{eq:CP}, with~$\tilde p$ as in~\eqref{eq:pcrit0}. Moreover, let~$p<1+2/(n-2)$ if~$n\geq3$. Then there exists~$\eps>0$ such that for any~$u_0\in L^1\cap L^p$, satisfying~\eqref{eq:u0eps}, there exists a unique global solution as in~\eqref{eq:solution0}, to~\eqref{eq:CP}. Moreover, the solution satisfies the following estimates:
\begin{equation}
\label{eq:decayu0}
\|u(t,\cdot)\|_{L^q} \leq C\,(1+t)^{\alpha-\frac{n}2(1+\alpha)\left(1-\frac1q\right)}\,\|u_0\|_{L^1\cap L^p}, \qquad \forall \, q\in[1,p], \ \forall t\geq0,
\end{equation}
where~$C>0$ does not depend on the data.
\end{theorem}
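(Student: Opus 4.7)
I plan to prove Theorem~\ref{thm:00} by a contraction argument, working in the Banach space
\[ X \doteq \bigl\{ u\in\mathcal C([0,\infty),L^1\cap L^p) : \|u\|_X<\infty\bigr\}, \qquad \|u\|_X \doteq \sup_{t\geq 0}\,\sup_{q\in[1,p]}\, (1+t)^{\frac{n}{2}(1+\alpha)(1-\frac1q)-\alpha}\, \|u(t,\cdot)\|_{L^q}, \]
whose weight encodes the target decay in~\eqref{eq:decayu0}. Given $u\in X$, let $\Phi(u)$ denote the solution to the linear problem~\eqref{eq:CPlinear} with data $u_0$, with $u_1=0$, and with forcing $f=|u|^p$; a fixed point of $\Phi$ in $X$ is then a solution to~\eqref{eq:CP}.

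The homogeneous piece $\Phi(0)$ is estimated via Corollary~\ref{cor:linear} with $u_1=0$ and $f=0$, yielding $\|\Phi(0)(t,\cdot)\|_{L^q}\leq C(1+t)^{-\beta_q}\|u_0\|_{L^1\cap L^p}$ with $\beta_q$ as in~\eqref{eq:betaqgood}, since the hypothesis $p<1+2/(n-2)$ rules out the borderline case. Because $\alpha>0$, this bound is dominated by the weight in $\|\cdot\|_X$, so $\|\Phi(0)\|_X\leq C\|u_0\|_{L^1\cap L^p}\leq C\eps$. For the nonlinear contribution, the crucial observation is
\[ \||u(t,\cdot)|^p\|_{L^1}=\|u(t,\cdot)\|_{L^p}^p\leq \|u\|_X^p\,(1+t)^{-\eta}, \qquad \eta \doteq p\Bigl(\tfrac{n}{2}(1+\alpha)\bigl(1-\tfrac1p\bigr)-\alpha\Bigr), \]
and a short algebraic manipulation identifies $\eta>1$ with $p>\tilde p$ exactly, which is the very content of the critical exponent~\eqref{eq:pcrit0}. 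Feeding this into Theorem~\ref{thm:linear} with $r_2=1$ (condition~\eqref{eq:r} reduces to $\frac{n}{2}(1-\frac1q)<1$ for all $q\in[1,p]$, which follows from $p<1+2/(n-2)$), the Duhamel contribution satisfies
\[ \|\Phi(u)(t,\cdot)-\Phi(0)(t,\cdot)\|_{L^q}\leq C\,(1+t)^{\alpha-\frac{n}{2}(1+\alpha)(1-\frac1q)}\,\|u\|_X^p, \]
which matches the weight in $\|\cdot\|_X$ and gives $\|\Phi(u)\|_X\leq C\eps+C\|u\|_X^p$.

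The contraction estimate follows the same route: using the pointwise bound $\bigl||u|^p-|v|^p\bigr|\leq Cp(|u|^{p-1}+|v|^{p-1})|u-v|$ together with H\"older's inequality in $L^p$, one obtains $\||u|^p-|v|^p\|_{L^1}\leq C(\|u\|_X^{p-1}+\|v\|_X^{p-1})\|u-v\|_X\,(1+t)^{-\eta}$, and re-applying Theorem~\ref{thm:linear} produces $\|\Phi(u)-\Phi(v)\|_X\leq C(\|u\|_X^{p-1}+\|v\|_X^{p-1})\|u-v\|_X$. For $\eps$ small enough, $\Phi$ is therefore a contraction on a small closed ball of $X$; Banach's fixed point theorem delivers the unique global solution in~\eqref{eq:solution0}, and the estimate~\eqref{eq:decayu0} is automatic from membership in that ball. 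The main obstacle I anticipate is the clean algebraic identification $\eta>1\iff p>\tilde p$ that pins down the critical exponent; a minor subtlety is that one must work with Theorem~\ref{thm:linear} at $r_2=1$ alone, rather than through Corollary~\ref{cor:linear} which requires~\eqref{eq:fgood} for every $r_2\in[1,p]$, because controlling $\||u|^p\|_{L^{r_2}}$ for $r_2>1$ would demand $u\in L^{pr_2}$ with $pr_2>p$, outside our working space.
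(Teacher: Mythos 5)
Your proposal is correct and follows essentially the same route as the paper: the same weighted space (your $\sup_{q\in[1,p]}$ norm is equivalent, by interpolation, to the paper's norm built on the $q=1$ and $q=p$ endpoints), the same key computation $\||u(t,\cdot)|^p\|_{L^1}\lesssim (1+t)^{-\eta}\|u\|_X^p$ with $\eta>1\iff p>\tilde p$, and the same application of Theorem~\ref{thm:linear} with $r_2=1$ to close the contraction. Your closing remark about why one must invoke Theorem~\ref{thm:linear} at $r_2=1$ rather than Corollary~\ref{cor:linear} is exactly the choice the paper makes as well.
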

\begin{remark}\label{rem:decay0}
The decay rate in~\eqref{eq:decayu0} is the same of the decay rate of the linear problem, in~\eqref{eq:decayulin}, when~$K>0$ and~$u_1\equiv0$. However, the decay rate in~\eqref{eq:decayu0} is worse than the decay rate of the homogeneous linear problem, which corresponds to take~$K=0$, when~$u_1\equiv0$ (see Remark~\ref{rem:decayhom}).
\end{remark}
\begin{remark}
The critical exponents~$\tilde p$ and~$\bar p$ verify
\[ 1+\frac2{n}<\tilde p < 1+\frac2{n-1}<\bar p < 1 +\frac2{n-2}. \]
We notice that~$\tilde p$ is increasing with respect to~$\alpha$, whereas~$\bar p$ is decreasing with respect to~$\alpha$.
\end{remark}
\begin{remark}
If~$\alpha\to0$, then~$\tilde p$ in~\eqref{eq:pcrit0} tends to Fujita exponent~$1+2/n$, the critical exponent for the semilinear heat equation. On the other hand, $\bar p$ tends to~$1+2/(n-2)$ as~$\alpha\to0$. This latter fact is less surprising than what may appear. Indeed, if~$u$ solves the linear Cauchy problem for the heat equation, then
\[ u_t(0,x)=\Delta u_0(x). \]
In particular, having in mind the properties of Riesz potential, assuming $u_t(0,\cdot)\in L^1\cap L^\infty$, only implies, that~$u_0\in L^r\cap L^\infty$, for any~$r>n/(n-2)$, in~\eqref{eq:Fujita} in space dimension~$n\geq3$. The limit of our exponent~$\bar p$ is then justified noticing that global solutions to~\eqref{eq:Fujita} exist when~$u_0\in L^r\cap L^\infty$ is small, for any~$r>n/(n-2)$, if~$p>1+2r/n=1+2/(n-2)$.
\end{remark}
\begin{remark}
As~$\alpha\to 1$, both the critical exponents~$\tilde p$ and~$\bar p$ tend to the exponent~$1+2/(n-1)$ found by Kato~\cite{K}. However, this latter is different from the critical exponent in~\eqref{eq:strauss} for the semilinear wave equation. The reason for this ``lack of continuity at~$\alpha=1$'' is that the influence of oscillations is neglected in the kernels for the fractional diffusive equation, for any~$\alpha\in(0,1)$, whereas it becomes relevant for the wave equation. The critical exponent~$1+2/(n-1)$ is the same found for the semilinear wave equation with structural damping~$(-\Delta)^{\frac12}u_t$ (see \cite{DA14proc, DARMMAS, DKR}). Indeed, the influence of oscillations is also neglected for this latter model, due to the presence of this special structural damping term, even if no diffusion phenomenon comes into play.
\end{remark}

\subsection{Discussion about the critical exponents in Theorems~\ref{thm:10} and~\ref{thm:00}}\label{sec:scaling}

Quite often, critical exponents for semilinear equations may be found by using scaling arguments. If~$\lambda\in(0,+\infty)$, then
\[ \partial_t^{1+\alpha} (f(\lambda t))= \lambda^{1+\alpha}\,(\partial_t^{1+\alpha} f)(\lambda t). \]
Therefore, given a solution~$u$ to the equation in~\eqref{eq:CP}, the function~$\lambda^{\frac{2}{p-1}}\,u(\lambda^{\frac2{1+\alpha}}t,\lambda u)$ is a solution to~\eqref{eq:CP} for any~$\lambda\in(0,+\infty)$. Due to
\[ \partial_t \bigl( u(\lambda^{\frac2{1+\alpha}}t,\lambda x)\bigr)\bigl|_{t=0}\bigr. = \lambda^{\frac2{1+\alpha}}\,u_1(\lambda x), \]
and
\[ \|\lambda^{\frac{2}{p-1}+\frac2{1+\alpha}}\,u_1(\lambda\cdot)\|_{L^q} = \lambda^{\frac{2}{p-1}+\frac2{1+\alpha}-\frac{n}q}\,\|u_1\|_{L^q}, \]
the scaling exponent for~\eqref{eq:CP} is
\[ q_\sca = \frac{n(p-1)}2\,\frac{1+\alpha}{p+\alpha}. \]
Indeed, as one hopefully expects, our critical exponent~$\bar p$ in~\eqref{eq:pcrit}, obtained for non-trivial~$u_1$, is the solution to~$q_\sca=1$.

When~$u_1$ is zero, we may try to apply the scaling arguments to~$u_0$, and see if the critical exponent~$\tilde p$ in~\eqref{eq:pcrit0} comes out, but this is not the case. Indeed, due to
\[ \|\lambda^{\frac{2}{p-1}}\,u_0(\lambda\cdot)\|_{L^q} = \lambda^{\frac{2}{p-1}-\frac{n}q}\,\|u_0\|_{L^q}, \]
the scaling exponent is~$q_\sca=n(p-1)/2$, the same of the heat equation. The motivation for this apparent inconsistency is that a loss of decay rate~$(1+t)^\alpha$ appears for the solution to~\eqref{eq:CP} with~$u_1\equiv0$, with respect to the homogeneous problem with~$u_1\equiv0$ (Theorem~\ref{thm:linear} with~$u_1\equiv0$ and~$f\equiv0$, see Remark~\ref{rem:decay0}). Indeed, for any~$q\in[1,\infty]$ if~$n=1$, and~$q<1+2/(n-2)$ if~$n\geq2$, the decay rate for the solution to~\eqref{eq:CP} in Theorem~\ref{thm:00} is given by
\[ \|u(t,\cdot)\|_{L^q} \lesssim (1+t)^{\alpha-\frac{n}2(1+\alpha)\left(1-\frac1q\right)}\,\|u_0\|_{L^1\cap L^p}.\]
The effect of having a critical exponent different from the solution to~$q_\sca=1$, as related to the presence of fractional integration, has been already observed for the heat equation with nonlinear memory~\cite{CDW}, namely, for
\begin{equation}\label{eq:heat}
\begin{cases}
\displaystyle
\partial_t u -\triangle u = \frac1{\Gamma(\alpha)}\,\int_0^t (t-s)^{-(1-\alpha)}\,|u(s,x)|^p\,ds \,,\\
u(0,x)=u_0(x)\,.
\end{cases}
\end{equation}
In this case, the critical exponent is
\begin{equation}\label{eq:pcritheat}
\max \left\{\hat p(n,\alpha)\,,\ \frac1{1-\alpha}\right\},\qquad \hat p(n,\alpha)\doteq 1+\frac{2(1+\alpha)}{n-2\alpha}.
\end{equation}
In particular, small data global solutions exist for~$p>\max\{\hat p(n,\alpha),1/(1-\alpha)\}$, and any solution blows up in finite time if~$1<p\leq\max\{\hat p(n,\alpha),1/(1-\alpha)\}$, provided that~$u_0\geq0$ is non-trivial. The same critical exponent remains valid for damped waves with nonlinear memory~\cite{DA14}.

We notice that~$\hat p(n,\alpha)>\tilde p(n,\alpha)$ for any~$\alpha\in(0,1)$. Indeed, one has~$\tilde p(n,\alpha)=\hat p (n(1+\alpha),\alpha)$. The relation between problem~\eqref{eq:CP} with~$u_1\equiv0$ and problem~\eqref{eq:heat}, and their critical exponents, becomes more clear in view of Remark~\ref{rem:CP0} in Section~\ref{sec:caputo}.

\subsection{Caputo and Riemann-Liouville fractional derivatives}\label{sec:caputo}

Caputo fractional derivative~\eqref{eq:caputo} may be written by means of the Riemann-Liouville fractional derivative, using the following relation (Theorem~$2.1$ in~\cite{KST}):
\[ \partial_t^{j+\alpha} g (t) = D^{j+\alpha} g_j(t), \qquad g_j(s)=g(s)- \sum_{k=0}^{j} \frac{g^{(k)}(0)}{k!}\,s^k, \]
where
\begin{equation}\label{eq:RLder}
D^{j+\alpha} h (t) = \partial_t^{j+1} (J^{1-\alpha} h)(t),
\end{equation}
for~$j\in\N$ and~$\alpha\in(0,1)$, is the Riemann-Liouville fractional derivative. Under suitable assumptions on~$g$, some Caputo fractional derivatives commute with ordinary derivatives, when applied to~$g$.
\begin{remark}\label{rem:0}
Let~$j\in\N\setminus\{0\}$ and assume~$g^{(k)}(0)=0$, for any~$k=1,\ldots,j$. Then
\begin{equation}\label{eq:CaputoRL}
\partial_t^{j+\alpha} g = \partial_t^j (\partial_t^\alpha g),
\end{equation}
for any~$\alpha\in(0,1)$. %, that is, $g$ is in the kernel of the commutator $[\partial_t^j,\partial_t^\alpha]$.
Indeed, due to
\[ g_j(t) = g(t)-g(0) = g_0(t), \]
we get
\[ \partial_t^{j+\alpha} g (t) = D^{j+\alpha} g_j (t) = D^{j+\alpha} g_0 (t) = \partial_t^j (D^\alpha g_0) (t) = \partial_t^j (\partial_t^\alpha g). \]
\end{remark}
We notice that
\begin{equation}\label{eq:CaputoRLtrick}
\partial_t^j (\partial_t^\alpha g)=D^{j-1+\alpha} (g').
\end{equation}
Between fractional integration and fractional differentiation it holds the following relation (see Lemma~$2.4$ in~\cite{KST}):
\begin{equation}
\label{eq:id+}
D^\alpha\, J^\alpha f = f,
\end{equation}
for any~$f\in L^p([0,T])$, for some~$p\in[1,\infty]$.
\begin{remark}\label{rem:CP0}
As a consequence of Remark~\ref{rem:0} and~\eqref{eq:CaputoRLtrick}, \eqref{eq:id+}, any solution to the following integro-differential problem:
\begin{equation}\label{eq:CP0}
\begin{cases}
u_t = J^{\alpha} \bigl( \Delta u + |u|^p\bigr), \qquad t>0,\ x\in\R^n,\\
u(0,x)=u_0(x),
\end{cases}
\end{equation}
also solves the Cauchy problem~\eqref{eq:CP} with~$u_1=0$. Indeed, applying~$D^\alpha$ to both sides of the equation in~\eqref{eq:CP0}, one obtains the equation in~\eqref{eq:CP}, and evaluating the equation in~\eqref{eq:CP0} at~$t=0$, one gets~$u_t(0,x)=0$. Problem~\eqref{eq:CP0} has been recently studied in~\cite{AV, DAEPproc}. In space dimension~$n=1$ it was first studied by Y. Fujita~\cite{F}.
\end{remark}

\subsection{Representation of the solution to the linear problem}

The following result for the Cauchy problem for Caputo fractional differential equations allows us to study~\eqref{eq:CPlinear} by using the Fourier transform with respect to~$x$.
\begin{theorem}\label{thm:KST}
[Theorem 4.3, Example 4.10 in~\cite{KST}]
Let~$\alpha\in(0,1)$, $b_{0},b_{1},\lambda\in\R$. Then the unique solution to
\[ \begin{cases}
\partial_t^{1+\alpha} g = \lambda g + f(t) & t>0,  \\
g(0)=b_0, \\
g'(0)=b_1,
\end{cases} \]
is given by
\begin{equation}\label{eq:CPsolution}
g(t)=b_0\,E_{1+\alpha,1}\bigl(\lambda\,t^{1+\alpha}\bigr)+b_1\,t\,E_{1+\alpha,2}\bigl(\lambda\,t^{1+\alpha}\bigr) + \int_0^t (t-s)^{\alpha}\,E_{1+\alpha,1+\alpha}\bigl(\lambda\,(t-s)^{1+\alpha}\bigr)\,f(s)\,ds,
\end{equation}
where~$E_{1+\alpha,\beta}$ are the Mittag-Leffler functions:
\[ E_{1+\alpha,\beta}(z) = \sum_{k=0}^\infty \frac{z^k}{\Gamma(k+\alpha k+\beta)}. \]
\end{theorem}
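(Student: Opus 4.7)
My plan is to prove the representation formula~\eqref{eq:CPsolution} by applying the Laplace transform in~$t$, which is the natural tool here: the equation is linear, has constant coefficients in~$t$, and the Caputo derivative has a transparent Laplace symbol in terms of the initial data. The key identity
\[ \mathcal L[\partial_t^{1+\alpha}g](s) = s^{1+\alpha}\,\widetilde g(s) - s^\alpha\,g(0) - s^{\alpha-1}\,g'(0) \]
follows directly from the factorization $\partial_t^{1+\alpha}g = J^{1-\alpha}g''$ in~\eqref{eq:caputo}, the formula $\mathcal L[J^\beta h](s) = s^{-\beta}\widetilde h(s)$ (which is just a convolution with $t^{\beta-1}/\Gamma(\beta)$), and the usual Laplace rule for a second derivative.

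Once the ODE is transformed, it reduces to the algebraic identity
\[ \widetilde g(s) = \frac{s^\alpha\,b_0}{s^{1+\alpha}-\lambda} + \frac{s^{\alpha-1}\,b_1}{s^{1+\alpha}-\lambda} + \frac{\widetilde f(s)}{s^{1+\alpha}-\lambda}. \]
To invert it, I would establish the Mittag--Leffler Laplace identity
\[ \mathcal L\bigl[t^{\beta-1}\,E_{1+\alpha,\beta}(\lambda\,t^{1+\alpha})\bigr](s) = \frac{s^{1+\alpha-\beta}}{s^{1+\alpha}-\lambda}, \]
valid for $\mathrm{Re}\,s$ sufficiently large. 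This is obtained termwise from the series defining $E_{1+\alpha,\beta}$, using $\mathcal L[t^{\beta+(1+\alpha)k-1}](s)=\Gamma(\beta+(1+\alpha)k)/s^{\beta+(1+\alpha)k}$, summing the resulting geometric series in $\lambda/s^{1+\alpha}$, and then extending to all $\lambda$ by analytic continuation. Specialising to $\beta=1,2,1+\alpha$ produces the three kernels appearing in~\eqref{eq:CPsolution}; the convolution theorem applied to $\widetilde f(s)/(s^{1+\alpha}-\lambda)$ yields the Duhamel integral.

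For uniqueness I would use linearity, reducing to $b_0=b_1=0$, $f\equiv 0$. Applying $J^{1+\alpha}$ to the homogeneous equation and using $J^{1+\alpha}\partial_t^{1+\alpha}g = J^2 g'' = g(t) - g(0) - t\,g'(0)$ (by~\eqref{eq:caputo} and the semigroup property of $J^\beta$) rewrites the problem as the Volterra integral equation $g(t) = \lambda\,J^{1+\alpha}g(t)$, whose only continuous solution is $g\equiv 0$ by a standard Banach fixed-point argument on $\mathcal C([0,T])$ with $T$ small, followed by iteration to cover $[0,\infty)$. The most delicate point I expect is the rigorous justification of the termwise Laplace inversion for $E_{1+\alpha,\beta}$: one must ensure uniform convergence of the Mittag--Leffler series on compact $t$-intervals and absolute convergence of the corresponding series of Laplace integrals for $\mathrm{Re}\,s$ sufficiently large, which rests on the entire-function nature of $E_{1+\alpha,\beta}$ and polynomial bounds for its coefficients. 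An alternative route that sidesteps the Laplace machinery entirely is to plug~\eqref{eq:CPsolution} into the equation and verify it term by term, using $\partial_t^{1+\alpha}t^{(1+\alpha)k} = \Gamma((1+\alpha)k+1)/\Gamma((1+\alpha)(k-1)+1)\cdot t^{(1+\alpha)(k-1)}$ for $k\geq 1$ together with $\partial_t^{1+\alpha}c = \partial_t^{1+\alpha}(c\,t) = 0$; this checks both the ODE and the initial conditions directly from the series expansions.
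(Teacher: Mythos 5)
The paper does not actually prove Theorem~\ref{thm:KST}: it is imported verbatim from~\cite{KST} (Theorem 4.3 and Example 4.10), so there is no in-paper argument to compare yours against. Judged on its own, your Laplace-transform derivation is correct and is the standard route to the formula: the symbol $\mathcal L[\partial_t^{1+\alpha}g](s)=s^{1+\alpha}\widetilde g(s)-s^{\alpha}b_0-s^{\alpha-1}b_1$ follows exactly as you say from~\eqref{eq:caputo}, the identity $\mathcal L[t^{\beta-1}E_{1+\alpha,\beta}(\lambda t^{1+\alpha})](s)=s^{1+\alpha-\beta}/(s^{1+\alpha}-\lambda)$ is valid for $\Re s$ large enough that $|\lambda|<|s|^{1+\alpha}$, and specialising $\beta=1,2,1+\alpha$ produces precisely the three kernels in~\eqref{eq:CPsolution}. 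Your uniqueness argument via the Volterra equation $g=\lambda J^{1+\alpha}g$ is also sound; in fact you can skip the small-$T$ contraction and iteration entirely, since $\|(\lambda J^{1+\alpha})^n g\|_{L^\infty(0,T)}\leq |\lambda|^n T^{n(1+\alpha)}\Gamma(n(1+\alpha)+1)^{-1}\|g\|_{L^\infty(0,T)}\to0$ for every fixed $T$. Two points deserve more care than your sketch gives them. First, the Laplace method tacitly assumes $g$ and $f$ are of exponential order, so to obtain the theorem in the stated generality you should either add such a growth hypothesis or lean on your alternative direct verification; this is presumably why~\cite{KST} phrases the result through the equivalent Volterra integral equation rather than purely through transforms. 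Second, in the direct verification the only genuinely delicate piece is the Duhamel term: computing $\partial_t^{1+\alpha}$ of $\int_0^t(t-s)^{\alpha}E_{1+\alpha,1+\alpha}(\lambda(t-s)^{1+\alpha})f(s)\,ds$ requires differentiating under the integral (where the boundary contributions vanish because the kernel and its first $t$-derivative vanish at $s=t$, thanks to $\alpha\in(0,1)$) and then invoking the semigroup property of $J^{\beta}$; your outline asserts the term-by-term check only for the two homogeneous pieces, so this step should be written out.
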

To manage the Mittag-Leffler functions in Theorem~\ref{thm:KST}, we will use the following representation.
\begin{theorem}\label{thm:ML}[Theorem~1.1.3 in~\cite{PS}]
Let~$\rho\in(1/3,1)$, $\beta\in\R$, and~$m\in\N$, with~$m\geq\rho\beta-1$. Then, for any~$z>0$ it holds:
\[ E_{1/\rho,\beta}(-z^{1/\rho})=2\rho\,z^{1-\beta}\,e^{z\cos(\pi\rho)}\,\cos(z\sin(\pi\rho)-\pi\rho(\beta-1))+\sum_{k=1}^m \frac{(-1)^{k-1}}{\Gamma(\beta-k/\rho)}\,z^{-k/\rho} +\Omega_m,\]
where
\[ \Omega_m(z)=\frac{(-1)^m\,z^{1-\beta}}\pi \Bigl( I_{1,m}\,\sin\bigl(\pi(\beta-(m+1)/\rho)\bigr) + I_{2,m}\,\sin\bigl(\pi(\beta-m/\rho)\bigr)\Bigr), \]
and
\[ I_{j,m}(z) = \int_0^\infty \frac{s^{(m+j)/\rho-\beta}}{s^{2/\rho}+2\cos(\pi/\rho)\,s^{1/\rho}+1}\,e^{-zs}\,ds. \]
\end{theorem}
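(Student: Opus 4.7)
The plan is to start from the standard contour integral representation of the Mittag-Leffler function
$$E_{1/\rho,\beta}(w)=\frac{1}{2\pi i}\int_{Ha}\frac{\zeta^{1/\rho-\beta}\,e^{\zeta}}{\zeta^{1/\rho}-w}\,d\zeta,$$
where $Ha$ is a Hankel contour coming from $-\infty$ below the negative real axis, encircling the origin and the relevant poles, and returning to $-\infty$ above. Setting $w=-z^{1/\rho}$ and rescaling $\zeta=z\tau$ isolates $z^{1-\beta}$ as the overall scale:
$$E_{1/\rho,\beta}(-z^{1/\rho})=\frac{z^{1-\beta}}{2\pi i}\int_{Ha}\frac{\tau^{1/\rho-\beta}\,e^{z\tau}}{\tau^{1/\rho}+1}\,d\tau.$$

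The denominator vanishes where $\tau^{1/\rho}=-1$, and the assumption $\rho\in(1/3,1)$ is precisely what guarantees that exactly two such zeros lie on the principal sheet $|\arg\tau|<\pi$, namely $\tau_\pm=e^{\pm i\pi\rho}$ (for $\rho\leq 1/3$ additional roots would intrude, for $\rho\geq 1$ both would disappear). I would first deform $Ha$ to pick up these two residues. A direct calculation, using $\frac{d}{d\tau}\tau^{1/\rho}|_{\tau_\pm}=(1/\rho)\,e^{\pm i\pi(1-\rho)}$, yields
$$\mathrm{Res}_{\tau=\tau_\pm}\frac{\tau^{1/\rho-\beta}\,e^{z\tau}}{\tau^{1/\rho}+1}=\rho\,e^{\pm i\pi\rho(1-\beta)}\,e^{z\cos(\pi\rho)}\,e^{\pm i z\sin(\pi\rho)},$$
and summing the two conjugate contributions collapses them into the factor $2\rho\,e^{z\cos(\pi\rho)}\cos\bigl(z\sin(\pi\rho)-\pi\rho(\beta-1)\bigr)$. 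Multiplied by $z^{1-\beta}$, this produces the first term of the claimed expansion.

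The residual contour integral runs along a Hankel path $Ha'$ that stays away from both poles. On such a path one expands the finite geometric series
$$\frac{1}{\tau^{1/\rho}+1}=\sum_{k=1}^{m}(-1)^{k-1}\,\tau^{-k/\rho}+\frac{(-1)^{m}\,\tau^{-(m+1)/\rho}}{1+\tau^{-1/\rho}},$$
and integrates term by term. Reinstating the scaling $u=z\tau$ in the $k$-th summand reduces it to Hankel's formula $\Gamma(s)^{-1}=(2\pi i)^{-1}\int_{Ha'}u^{-s}\,e^{u}\,du$ with $s=\beta-k/\rho$, producing $(-1)^{k-1}z^{-k/\rho}/\Gamma(\beta-k/\rho)$; this is the advertised polynomial part. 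For the remainder $\Omega_m$, I would collapse $Ha'$ onto the two banks of the cut along $(-\infty,0)$: parametrizing $\tau=se^{\pm i\pi}$ and subtracting, the multivalued factors $\tau^{(m+j)/\rho-\beta}$ turn their phase jumps into precisely the factors $\sin\bigl(\pi(\beta-(m+j)/\rho)\bigr)$, while $e^{z\tau}$ becomes $e^{-zs}$ and yields the integrals $I_{j,m}$; the lower bound $m\geq\rho\beta-1$ is exactly what ensures convergence of these integrals at $s=0$. The main technical obstacle is bookkeeping the branch of $\zeta^{1/\rho-\beta}$ across the successive deformations, but once the two conjugate poles are correctly identified on the principal sheet the remainder of the argument is essentially mechanical.
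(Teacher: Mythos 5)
First, a point of reference: the paper does not prove this statement at all — it is quoted verbatim as Theorem~1.1.3 of the cited work of Popov and Sedletskii — so there is no internal proof to compare against. Your overall strategy (Hankel-contour representation of $E_{1/\rho,\beta}$, extraction of the residues at the two poles $e^{\pm i\pi\rho}$ on the principal sheet, which is exactly where $\rho\in(1/3,1)$ enters, a finite geometric expansion of the denominator whose partial sums are converted to reciprocal Gamma values by Hankel's formula, and a remainder collapsed onto the two banks of the cut) is indeed the standard route to this identity, and your computation of the oscillatory leading term is correct.

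There is, however, a concrete error in the middle step: you expand the geometric series in the wrong variable. With your identity $\frac{1}{\tau^{1/\rho}+1}=\sum_{k=1}^{m}(-1)^{k-1}\tau^{-k/\rho}+\frac{(-1)^{m}\tau^{-(m+1)/\rho}}{1+\tau^{-1/\rho}}$, integrating the $k$-th summand against $z^{1-\beta}\tau^{1/\rho-\beta}e^{z\tau}$ and substituting $u=z\tau$ gives, via Hankel's formula, the term $(-1)^{k-1}\,z^{(k-1)/\rho}/\Gamma\bigl(\beta+(k-1)/\rho\bigr)$ — a \emph{growing} power of $z$ with the wrong Gamma argument, not the stated $(-1)^{k-1}\,z^{-k/\rho}/\Gamma(\beta-k/\rho)$. (As a sanity check: for $\beta=2$, $m=1$, your $k=1$ term is the constant $1/\Gamma(2)=1$, which is incompatible with $E_{1/\rho,2}(-z^{1/\rho})\to0$ as $z\to\infty$ while all other terms of the claimed formula decay.) Likewise your remainder carries the numerator $\tau^{(1-m)/\rho-\beta}$ rather than $\tau^{(m+1)/\rho-\beta}$, so it does not reduce to the stated $I_{j,m}$. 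The correct expansion is in \emph{positive} powers of $\tau^{1/\rho}$, i.e.\ of $(\zeta/z)^{1/\rho}$ before rescaling: $\frac{1}{1+\tau^{1/\rho}}=\sum_{k=0}^{m-1}(-1)^{k}\tau^{k/\rho}+\frac{(-1)^{m}\tau^{m/\rho}}{1+\tau^{1/\rho}}$. With this choice the $k$-th term yields exactly $(-1)^{k}z^{-(k+1)/\rho}/\Gamma\bigl(\beta-(k+1)/\rho\bigr)$, which reindexes to the stated sum, and the remainder, collapsed onto $\tau=se^{\pm i\pi}$ after multiplying by the conjugate of $1+s^{1/\rho}e^{i\pi/\rho}$, produces the denominator $s^{2/\rho}+2\cos(\pi/\rho)s^{1/\rho}+1$ together with the numerators $s^{(m+1)/\rho-\beta}$ and $s^{(m+2)/\rho-\beta}$ and the sine factors of $\Omega_m$. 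Everything else in your outline then goes through.
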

\begin{Not}
In Theorem~\ref{thm:ML} and in all the paper, we will use the notation~$\rho=1/(1+\alpha)$. In particular, $\rho\in(1/2,1)$, due to~$\alpha\in(0,1)$.
\end{Not}
\begin{remark}\label{rem:uniformI}
We notice that~$I_{j,m}(z)$ is uniformly bounded with respect to~$z\in(0,\infty)$, that is,
\[ \int_0^\infty \frac{s^{(m+j)/\rho-\beta}}{s^{2/\rho}+2\cos(\pi/\rho)\,s^{1/\rho}+1}\,ds<\infty, \]
if, and only if,
\begin{equation}\label{eq:uniformI}
-1 < m+j-1+\rho(1-\beta) < 1.
\end{equation}
In particular, condition~\eqref{eq:uniformI} holds for~$\beta=1$ and~$m+j=1$, and for~$\beta=1/\rho$ and~$m+j=2$. On the other hand, it does not hold for~$\beta=2$ and~$m+j=3$.
\end{remark}

%%%%%%%%%%%%%%%%%%%%%%%%%%%%%%%%

\section{Linear estimates}\label{sec:linear}

We first consider linear problem~\eqref{eq:CPlinear}. After performing the Fourier transform with respect to~$x$, $\hat u=\mathfrak{F}_x u(t,\xi)$, we obtain
\begin{equation}\label{eq:CPF}
\begin{cases}
\partial_t^{1+\alpha} \hat u + \xii^{2} \hat u = \hat f(t,\xi),\qquad t>0,\ x\in\R^n,\\
\hat u(0,\xi)=\hat u_0(\xi),\\
\hat u_t(0,\xi)=\hat u_1(\xi).
\end{cases}
\end{equation}
Thanks to Theorem~\ref{thm:KST}, the solution to~\eqref{eq:CPlinear} is given by
\begin{equation}\label{eq:solutiondef}
u(t,\cdot)=u^\hom(t,\cdot) + \int_0^t (t-s)^{\alpha}\,G_{1+\alpha,1+\alpha}(t-s,\cdot) \ast_{(x)} f((s,\cdot))\,ds,
\end{equation}
where the homogeneous part of the solution is given by (see also~\cite{VK06})
\begin{equation}\label{eq:uhom}
u^\hom(t,\cdot)=G_{1+\alpha,1}(t,\cdot) \ast_{(x)} u_0 +t\,G_{1+\alpha,2}(t,\cdot) \ast_{(x)} u_1,
\end{equation}
and
\begin{equation}\label{eq:Gdef}
G_{1+\alpha,\beta}(t,x) = \mathfrak{F}^{-1} \Bigl( E_{1+\alpha,\beta}\bigl(-t^{1+\alpha}\,\xii^{2}\bigr) \Bigr).
\end{equation}
We will estimate the $L^r-L^q$ mapping properties, $1\leq r\leq q\leq\infty$, of the operator~$G_{1+\alpha,\beta}(t,x)\ast_{(x)}$, applying Theorem~\ref{thm:ML} with~$\rho=1/(1+\alpha)$ and~$z=t\xii^{2\rho}$. We directly estimate the fundamental solution~$G_{1/\rho,\beta}(1,\cdot)$ in~$L^p$ norms, having in mind Young inequality. By virtue of the scaling property
\begin{equation}\label{eq:scalingt}
\bigl\|\mathfrak{F}^{-1}\bigl(m(t^{\frac1{\rho}}|\cdot|^2)\bigr)\bigr\|_{L^p} = t^{-\frac{n}{2\rho}\left(1-\frac1p\right)}\bigl\|\mathfrak{F}^{-1}\bigl(m(|\cdot|^2)\bigr)\bigr\|_{L^p},
\end{equation}
it will be sufficient to consider~$G_{1/\rho,\beta}(1,\cdot)$. We will distinguish the three cases, $\beta=1,1/\rho,2$.

\subsection{Estimate for~$G_{1/\rho,1}$}

Having in mind the representation in Theorem~\ref{thm:ML}, we define
\begin{align}
\label{eq:Kdef}
K_{1/\rho,d}
    & = \mathfrak{F}^{-1} \bigl( \xii^d\,e^{\xii^{2\rho}\cos(\pi\rho)}\,\cos(\xii^{2\rho}\sin(\pi\rho)) \bigr),\\
\label{eq:Hdef}
H_{1/\rho,d}(s,\cdot)
    & =\mathfrak{F}^{-1}\bigl(\xii^d\,e^{-s\xii^{2\rho}}\bigr),\quad \forall s>0,
\end{align}
where~$d\in\R$. We first consider~\eqref{eq:Hdef}. By scaling property~\eqref{eq:scalingt}, we get
\begin{equation}\label{eq:scalings}
\| H_{1/\rho,d}(s,\cdot) \|_{L^p} = s^{-\frac{n}{2\rho}\left(1-\frac1p\right)-\frac{d}{2\rho}} \| H_{1/\rho,d}(1,\cdot) \|_{L^p},
\end{equation}
for any~$p\in[1,\infty]$, so it is sufficient to study in which cases the right-hand side is finite.

\begin{lemma}\label{lem:H}
Let~$\rho>0$ and~$d>-n$. Then we may distinguish two cases.
\begin{itemize}
\item If~$d\geq0$, then~$H_{1/\rho,d}(1,\cdot) \in L^p$, for any~$p\in[1,\infty]$.
\item If~$d\in(-n,0)$, then~$H_{1/\rho,d}(1,\cdot) \in L^p$, for any~$p\in(1,\infty]$, such that
\begin{equation}\label{eq:Lpd}
n\left(1-\frac1p\right)>-d.
\end{equation}
\end{itemize}
\end{lemma}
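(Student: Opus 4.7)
The plan is to split the multiplier $m(\xi) = |\xi|^d e^{-|\xi|^{2\rho}}$ into low- and high-frequency pieces via a smooth cutoff $\chi \in C_c^\infty(\R^n)$ with $\chi \equiv 1$ near the origin. The high-frequency part $(1-\chi)\,m$ is smooth with super-polynomial decay (the singularity of $|\xi|^d$ at zero is removed by the cutoff, and the exponential factor dominates any polynomial), hence its inverse Fourier transform is a Schwartz function and lies in every $L^p$. The whole analysis therefore reduces to the low-frequency, compactly supported piece $\chi\,m$.

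The $L^\infty$ bound for the low-frequency piece is immediate: $\chi(\xi)|\xi|^d$ is integrable near the origin exactly when $d > -n$, so $\mathfrak{F}^{-1}(\chi m)$ is bounded. The quantitative core of the proof is the decay estimate
\[ \bigl|\mathfrak{F}^{-1}(\chi m)(x)\bigr| \lesssim |x|^{-(n+d)} \qquad \text{for } |x|\geq 1, \]
which encodes that the leading singularity of $\chi m$ at the origin is $|\xi|^d$. To derive it, one expands $e^{-|\xi|^{2\rho}} = 1 + (e^{-|\xi|^{2\rho}}-1)$: the piece $\chi(\xi)|\xi|^d$ produces the leading tail $c_{n,d}\,|x|^{-(n+d)}$ via the classical Riesz-type identity $\mathfrak{F}^{-1}(|\xi|^d) = c_{n,d}|x|^{-(n+d)}$ (up to a Schwartz contribution coming from $(1-\chi)|\xi|^d$), while the remainder $\chi(\xi)|\xi|^d(e^{-|\xi|^{2\rho}}-1)$ vanishes at the origin to order $|\xi|^{d+2\rho}$ and hence produces a strictly faster decay at infinity.

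Assembling these ingredients, one computes
\[ \|H_{1/\rho,d}(1,\cdot)\|_{L^p}^p \lesssim 1 + \int_{|x|\geq 1} |x|^{-p(n+d)}\,dx, \]
which is finite precisely when $p(n+d) > n$, equivalently $n(1-1/p) > -d$. For $d\in(-n,0)$ this is exactly~\eqref{eq:Lpd}, giving the second case of the lemma. For $d\geq 0$ the condition is automatic for every $p\in[1,\infty]$, including the borderline $d=0$, $p=1$: there the leading term $|\xi|^d=1$ is smooth and the first nontrivial term of the expansion is $-|\xi|^{2\rho}$, improving the tail to $|x|^{-(n+2\rho)}$ which is integrable; equivalently, $\mathfrak{F}^{-1}(e^{-|\xi|^{2\rho}})$ is (up to normalization) the density of a $2\rho$-stable distribution and is automatically in $L^1$.

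The subtlest step will be making the tail expansion rigorous when $\rho\in(1/2,1)$ is non-integer, since then $e^{-|\xi|^{2\rho}}$ itself is only $C^1$ at the origin and one cannot blindly differentiate the multiplier. A clean way around this uses subordination: write $e^{-|\xi|^{2\rho}}=\int_0^\infty e^{-s|\xi|^2}\,d\mu_\rho(s)$ with $\mu_\rho$ the law of the associated $\rho$-stable subordinator, so that
\[ H_{1/\rho,d}(1,x)=\int_0^\infty \mathfrak{F}^{-1}(|\xi|^d e^{-s|\xi|^2})(x)\,d\mu_\rho(s) \]
reduces everything to Gaussian-modulated kernels of known $L^p$ size, at which point Minkowski's integral inequality in $s$ together with the standard tail bounds for $\mu_\rho$ closes the argument and reproduces the dichotomy of the lemma.
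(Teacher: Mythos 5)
Your proposal is correct in substance but follows a genuinely different route from the paper. The paper treats the three regimes separately: for $d=0$ it quotes the Blumenthal--Getoor asymptotics of the $2\rho$-stable density, $|x|^{n+2\rho}\,\mathfrak{F}^{-1}(e^{-|\xi|^{2\rho}})(x)\to C_{n,\rho}$, to get $L^1$; for $d\in(-n,0)$ it writes $H_{1/\rho,d}=(-\Delta)^{d/2}H_{1/\rho,0}$ and invokes the Riesz potential mapping $L^{p^*}\to L^p$, which is exactly where the restriction $n(1-1/p)>-d$ enters; and for $d>0$, $p=1$ it performs integrations by parts with the splitting $\{|\xi|\le|x|^{-1}\}\cup\{|\xi|\ge|x|^{-1}\}$ to obtain pointwise tails of order $|x|^{-(n+\min\{d,1\})}$ (with a logarithm at $d=1$). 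You instead aim at the single unified tail bound $|H_{1/\rho,d}(1,x)|\lesssim|x|^{-(n+d)}$ for $|x|\ge1$ and then integrate, which makes the threshold $p(n+d)>n$ transparent for positive and negative $d$ alike; and your use of subordination, $e^{-|\xi|^{2\rho}}=\int_0^\infty e^{-s|\xi|^2}\,d\mu_\rho(s)$ with Minkowski's inequality in $s$, is an elegant substitute for the integration-by-parts machinery. The moment condition it requires, $\int_0^\infty s^{-\theta}\,d\mu_\rho(s)<\infty$ with $\theta=\frac d2+\frac n2(1-\frac1p)$, is nonnegative precisely under the hypotheses of the lemma, and is then finite by the super-exponential vanishing of the stable subordinator density at $s=0^+$; that input should be stated and cited.

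A few loose ends to tighten. First, $\mathfrak{F}^{-1}\bigl((1-\chi)|\xi|^d\bigr)$ is not a Schwartz function for $d\ge0$ (the symbol does not decay); what is true, and all you need, is that its restriction to $|x|\ge1$ decays rapidly, which itself requires an integration-by-parts argument. Likewise the identity $\mathfrak{F}^{-1}(|\xi|^d)=c_{n,d}|x|^{-(n+d)}$ degenerates when $d$ is a nonnegative even integer (harmless, since then $\chi|\xi|^d$ is smooth, but it should be said). Second, the subordination step reduces the lemma to its own $\rho=1$ case, i.e.\ to the $L^p$ bounds for $\mathfrak{F}^{-1}(|\xi|^d e^{-s|\xi|^2})$, which still carries the $|\xi|^d$ singularity at the origin; this base case must be settled by classical means (e.g.\ the Riesz-potential argument of the paper, or the known Gaussian-times-Riesz asymptotics) to avoid circularity. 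Third, subordination is only available for $\rho\in(0,1]$, so your proof as written does not cover the full range $\rho>0$ claimed in the statement; this is harmless for the paper, which only uses $\rho=1/(1+\alpha)\in(1/2,1)$, but it should be acknowledged.
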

\begin{remark}
The result in Lemma~\ref{lem:H} is probably well-known to the reader, possibly with slight modifications and/or in a more general formulation. However, the scheme we used to prove it, will appear again, later in this paper, so we provide the reader with a self-contained proof of Lemma~\ref{lem:H}.
\end{remark}
\begin{proof}
It is clear that~$H_{1/\rho,d}(1,\cdot) \in L^\infty$ by Riemann-Lebesgue theorem, since~$\xii^{d}\,e^{-\xii^{2\rho}}$ is in~$L^1$, for any~$d>-n$.

First, let~$d=0$. It holds (see~\cite{BG})
\[ \lim_{|x|\to\infty} |x|^{n+2\rho}\,\mathfrak{F}^{-1} (e^{-\xii^{2\rho}})(x) = C_{n,\rho}, \]
for any~$\rho>0$, so that~$H_{1/\rho,0}(1,\cdot) \in L^1$, as well. Therefore, $H_{1/\rho,0}(1,\cdot) \in L^p$, for any~$p\in[1,\infty]$.

Now, let~$d\in(-n,0)$, and~$p\in(1,\infty)$, verifying~\eqref{eq:Lpd}. Setting~$p^*\in(1,-n/d)$ as
\[ \frac1{p^*}-\frac1p = \frac{-d}n, \]
we get
\[ \| H_{1/\rho,d}(1,\cdot) \|_{L^p} = \| (-\Delta)^{\frac{d}2}\,H_{1/\rho,0}(1,\cdot) \|_{L^p} \lesssim \|H_{1/\rho,0}(1,\cdot) \|_{L^{p^*}}, \]
by Riesz potential mapping properties. Therefore, $H_{1/\rho,d}(1,\cdot) \in L^p$.

Finally, let~$d>0$ and~$p=1$. By using the property
\[ e^{ix\xi} = \sum_{j=1}^n \frac{(-ix_j)}{|x|^2}\partial_{\xi_j} e^{ix\xi}, \]
and integrating by parts, we may write
\[ H_{1/\rho,d}(1,x)=|x|^{-k}\,(2\pi)^{-n}\sum_{|\gamma|=k}\left(\frac{ix}{|x|}\right)^\gamma \int_{\R^n} e^{ix\xi}\,\partial_\xi^\gamma\left(\xii^{d}\,e^{-\xii^{2\rho}}\right)\,d\xi, \]
for any~$k\in\N$. We notice that, in general,
\begin{equation}\label{eq:expcompensate}
|\partial_\xi^\gamma\left(\xii^{d}\,e^{-\xii^{2\rho}}\right)|\lesssim \xii^{d-|\gamma|}\,(1+\xii^{2\rho})^{|\gamma|}\,e^{-\xii^{2\rho}}\lesssim \xii^{d-|\gamma|}\,e^{-c\xii^{2\rho}},
\end{equation}
for some~$c\in(0,1)$.

\bigskip

If~$d>1$, then, taking~$k=n+1$, we may trivially estimate
\[ |H_{1/\rho,d}(1,x)|\lesssim |x|^{-(n+1)}\,\int_{\R^n} \xii^{d-(n+1)}\,e^{-c\xii^{2\rho}}d\xi \lesssim |x|^{-(n+1)}. \]
If~$d\in(0,1]$, we proceed in a different way. Let~$\gamma\in\N^n$, with~$|\gamma|=n$. We may split each integral into two parts:
\begin{multline*}
\int_{\R^n} e^{ix\xi}\,\partial_\xi^\gamma\left(\xii^d\,e^{-\xii^{2\rho}}\right)\,d\xi= I_0(x)+I_1(x) \\
= \int_{\xii\leq |x|^{-1}}e^{ix\xi}\,\partial_\xi^\gamma\left(\xii^d\,e^{-\xii^{2\rho}}\right)\,d\xi + \int_{\xii\geq |x|^{-1}}e^{ix\xi}\,\partial_\xi^\gamma\left(\xii^d\,e^{-\xii^{2\rho}}\right)\,d\xi.
\end{multline*}
On the one hand, we trivially estimate
\[ |I_0(x)| \lesssim \int_{\xii\leq |x|^{-1}} \xii^{d-n}\,e^{-c\xii^{2\rho}}\,d\xi\lesssim \int_{\xii\leq |x|^{-1}} \xii^{d-n}\,d\xi \lesssim |x|^{-d}. \]
On the other hand, we perform one additional step of integration by parts in~$I_1$. If~$d\in(0,1)$, we obtain
\[ |I_1(x)| \lesssim |x|^{-1}\,\int_{\xii=|x|^{-1}} \xii^{d-n}\,d\sigma + |x|^{-1}\,\int_{\xii\geq |x|^{-1}}\,\xii^{d-(n+1)}\,d\xi\lesssim |x|^{-d},\]
whereas, if~$d=1$, we split each integral into two parts (for large~$|x|$):
\[ \int_{\R^n} e^{ix\xi}\,\partial_{\xi_j}\partial_\xi^\gamma\left(\xii\,e^{-\xii^{2\rho}}\right)\,d\xi= I_{1,1}(x)+I_{1,2}(x)= \int_{|x|^{-1}\leq \xii\leq1}\ldots d\xi + \int_{\xii\geq 1}\ldots d\xi,\]
directly estimating~$I_{1,1}$, and performing one additional step of integration by parts in~$I_{1,2}$. This leads to
\[ |I_{1,1}(x)|\lesssim \log (1+|x|), \qquad |I_{1,2}(x)|\leq C.\]
Summarizing, we proved that
\[ |H_{1/\rho,d}(1,x)|\lesssim
\begin{cases}
|x|^{-(n+1)} & \text{if~$d>1$,}\\
|x|^{-(n+1)}\log(1+|x|) & \text{if~$d=1$,}\\
|x|^{-(n+d)} & \text{if~$d\in(0,1)$.}
\end{cases} \]
Recalling that $H_{1/\rho,d}(1,\cdot)\in L^\infty$, we obtained that $H_{1/\rho,d}(1,\cdot)\in L^1$ as well. Therefore, $H_{1/\rho,d}(1,\cdot) \in L^p$, for any~$p\in[1,\infty]$.
\end{proof}
\begin{remark}\label{rem:extrarho}
An additional power~$\xii^{2\rho}$ appears in~\eqref{eq:expcompensate}, when~$d$ is positive and even, and~$|\gamma|\geq d+1$, so that at least one derivative is applied to the exponential term, namely, one has
\[ |\partial_\xi^\gamma\left(\xii^{d}\,e^{-\xii^{2\rho}}\right)|\lesssim \xii^{2\rho+d-|\gamma|}\,(1+\xii^{2\rho})^{|\gamma|-1}\,e^{-\xii^{2\rho}}\lesssim \xii^{2\rho+d-|\gamma|}\,e^{-c\xii^{2\rho}}, \]
However, this improvement in this special case, is not necessary in the proof of Lemma~\ref{lem:H} for~$d>0$.
\end{remark}
With the scheme used for~$H_{1/\rho,d}(1,\cdot)$, we may deal with~$K_{1/d,\rho}$, defined in~\eqref{eq:Kdef}.
\begin{lemma}\label{lem:exp}
Let~$\rho\in(1/2,1)$ and~$d>-n$. Then we may distinguish two cases.
\begin{itemize}
\item If~$d\geq0$, then~$K_{1/\rho,d} \in L^p$, for any~$p\in[1,\infty]$.
\item If~$d\in(-n,0)$, then~$K_{1/\rho,d}\in L^p$, for any~$p\in(1,\infty]$, such that~\eqref{eq:Lpd} holds.
\end{itemize}
\end{lemma}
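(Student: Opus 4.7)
The plan is to adapt the scheme developed in the proof of Lemma~\ref{lem:H}, exploiting the crucial sign condition $\cos(\pi\rho)<0$, which holds exactly because $\rho\in(1/2,1)$. The whole argument hinges on the observation that the symbol
\[ m(\xi) \doteq |\xi|^d \, e^{|\xi|^{2\rho}\cos(\pi\rho)} \, \cos(|\xi|^{2\rho}\sin(\pi\rho)) \]
satisfies, for any multi-index $\gamma$ and some $c'\in(0,-\cos(\pi\rho))$, the pointwise estimate
\[ |\partial_\xi^\gamma m(\xi)| \lesssim |\xi|^{d-|\gamma|}\,(1+|\xi|^{2\rho})^{|\gamma|}\,e^{|\xi|^{2\rho}\cos(\pi\rho)} \lesssim |\xi|^{d-|\gamma|}\,e^{-c'|\xi|^{2\rho}}, \]
which is the precise analogue of~\eqref{eq:expcompensate}. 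Indeed, each derivative of either of the phases $|\xi|^{2\rho}\cos(\pi\rho)$ or $|\xi|^{2\rho}\sin(\pi\rho)$ contributes at most a factor of order $|\xi|^{2\rho-1}$ multiplied by bounded trigonometric terms, and the negative sign of $\cos(\pi\rho)$ allows the exponential to absorb the polynomial growth.

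With this bound in place, I would then rerun the argument of Lemma~\ref{lem:H} essentially verbatim. The $L^\infty$ estimate follows from Riemann--Lebesgue since $m\in L^1$ for $d>-n$. For $d>0$ and $p=1$, the pointwise decay of $K_{1/\rho,d}$ is recovered by integration by parts: I would take $k=n+1$ when $d>1$, and $k=n$ with the near-field/far-field splitting $\{|\xi|\leq|x|^{-1}\}\cup\{|\xi|\geq|x|^{-1}\}$ (plus one additional integration by parts in the far-field) when $d\in(0,1]$, yielding the logarithmic correction exactly at $d=1$. For $d\in(-n,0)$ subject to~\eqref{eq:Lpd}, I would write $K_{1/\rho,d}=(-\Delta)^{d/2}K_{1/\rho,0}$ and invoke Riesz potential mapping properties, reducing everything to the case $d=0$.

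The only step which does not come completely for free from Lemma~\ref{lem:H} is the base case $d=0$, because the reference~\cite{BG} is stated for the real exponential $e^{-|\xi|^{2\rho}}$. I expect this to be the main (though minor) obstacle. It can be settled either by carrying out the integration-by-parts scheme one further time with $k=n+1$ applied to a $C^\infty$ Schwartz symbol, or more conceptually by rewriting the symbol as $\mathrm{Re}\bigl(e^{e^{i\pi\rho}|\xi|^{2\rho}}\bigr)$ and noting that the argument of~\cite{BG} extends to this complex coefficient, since the modulus of the integrand is governed by $e^{|\xi|^{2\rho}\cos(\pi\rho)}$, which has the same decay rate as $e^{-|\xi|^{2\rho}}$. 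The restriction $\rho>1/2$ is essential throughout: if $\rho\leq 1/2$, then $\cos(\pi\rho)\geq 0$, the symbol grows exponentially, and no $L^p$ estimate of the inverse Fourier transform would be available.
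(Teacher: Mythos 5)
Your treatment of the cases $d>0$ and $d\in(-n,0)$ matches the paper's: the analogue of~\eqref{eq:expcompensate} holds there, the integration-by-parts and near-field/far-field splitting scheme of Lemma~\ref{lem:H} carries over, and the negative range reduces to $d=0$ via Riesz potentials. The gap is precisely in the base case $d=0$, which you correctly flag as the obstacle but do not actually close. Your general derivative bound specializes at $d=0$ to $|\partial_\xi^\gamma m(\xi)|\lesssim \xii^{-|\gamma|}e^{-c'\xii^{2\rho}}$, and with this bound the scheme fails: after $k=n+1$ integrations by parts the resulting integral $\int \xii^{-(n+1)}e^{-c'\xii^{2\rho}}\,d\xi$ diverges at the origin, and even the splitting with $k=n$ produces $\int_{\xii\leq|x|^{-1}}\xii^{-n}\,d\xi=\infty$. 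So your option (a) as stated does not work. The point the paper exploits, and which you miss, is that for $d=0$ the symbol is strictly better than the naive bound suggests: an explicit computation gives
\[
\partial_{\xi_j}\bigl(e^{\xii^{2\rho}\cos(\pi\rho)}\cos(\xii^{2\rho}\sin(\pi\rho))\bigr)=2\rho\,\xii^{2\rho-1}\,\frac{\xi_j}{\xii}\,e^{\xii^{2\rho}\cos(\pi\rho)}\,\cos\bigl(\pi\rho+\xii^{2\rho}\sin(\pi\rho)\bigr),
\]
that is, each derivative produces a factor $2\rho\,\xii^{2\rho-1}$ times a function of the \emph{same} form with a shifted phase, so that $\bigl|\partial_\xi^\gamma(\cdot)\bigr|\lesssim\xii^{2\rho-|\gamma|}e^{-c\xii^{2\rho}}$ for $|\gamma|\geq1$; this is~\eqref{eq:goodestimate}. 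Since $2\rho>1$, the $d=0$ case of $K$ then falls into the ``$d>1$'' regime of Lemma~\ref{lem:H}: $n+1$ integrations by parts give $|K_{1/\rho,0}(x)|\lesssim|x|^{-(n+1)}$ with a convergent integral (convergence at the origin requires exactly $2\rho-(n+1)>-n$, i.e.\ $\rho>1/2$), hence $K_{1/\rho,0}\in L^1$. This is where the hypothesis $\rho>1/2$ actually enters the $d=0$ argument, beyond merely ensuring $\cos(\pi\rho)<0$.

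Your option (b), extending the Blumenthal--Getoor asymptotics of~\cite{BG} to the complex exponent $e^{e^{i\pi\rho}\xii^{2\rho}}$, would presumably also work and would even yield the sharper decay $|x|^{-(n+2\rho)}$, but you assert rather than prove the extension, and it replaces a two-line derivative computation by a nontrivial adaptation of a cited theorem. I would recommend the explicit computation above instead; with it, the rest of your argument goes through as written.
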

\begin{proof}
We notice that $\cos(\pi\rho)<0$, due to~$\rho\in(1/2,1)$. It is clear that~$K\in L^\infty$, since~$\hat K\in L^1$, due to~$d>-n$.

First, let us consider the special case~$d=0$. Performing the integration by parts as in the proof of Lemma~\ref{lem:H}, we obtain~$|K(x)|\lesssim |x|^{-(n+1)}$ for large~$|x|$, due to~$2\rho>1$. Indeed, after the first step of integration by parts, we have:
\begin{align*}
& \partial_{\xi_j} \bigl(e^{\xii^{2\rho}\cos(\pi\rho)}\,\cos(\xii^{2\rho}\sin(\pi\rho))\bigr)\\
& \qquad = 2\rho\xii^{2\rho-1}\,\frac{\xi_j}{\xii}\,e^{\xii^{2\rho}\cos(\pi\rho)}\, \left(\cos(\pi\rho)\,\cos(\xii^{2\rho}\sin(\pi\rho))-\sin(\pi\rho)\,\sin(\xii^{2\rho}\sin(\pi\rho))\right)\\
& \qquad = 2\rho\xii^{2\rho-1}\,\frac{\xi_j}{\xii}\,e^{\xii^{2\rho}\cos(\pi\rho)}\,\cos(\pi\rho+\xii^{2\rho}\sin(\pi\rho)),
\end{align*}
so that
\begin{equation}\label{eq:goodestimate}
\bigl|\partial_\xi^\gamma \bigl(e^{\xii^{2\rho}\cos(\pi\rho)}\,\cos(\xii^{2\rho}\sin(\pi\rho))\bigr)\bigr| \lesssim \xii^{2\rho-|\gamma|}\,e^{-c\xii^{2\rho}},
\end{equation}
for some~$c\in(0,-\cos(\pi\rho))$, for any~$|\gamma|\geq1$. We may now follow the steps of the proof of Lemma~\ref{lem:H} for~$d>1$, thanks to the presence of the term~$2\rho>1$. (Incidentally, we notice that the argument may be also refined to prove that~$|K_{1/\rho,0}(x)|\lesssim |x|^{-(n+2\rho)}$ for large~$|x|$, see Remark~\ref{rem:extrarho}). % and, with more easy calculations, that~$|H_{1/\rho,0}(1,x)|\lesssim |x|^{-(n+2\rho)}$ for large~$|x|$.

\bigskip

For~$d\in(-n,0)$ and~$d>0$, the proof is analogous to the proof of Lemma~\ref{lem:H} for~$H_{1/\rho,d}(1,x)$. We notice that, for~$d\neq0$, we only have, in general,
\[ \bigl|\partial_\xi^\gamma \bigl(\xii^d \, e^{\xii^{2\rho}\cos(\pi\rho)}\,\cos(\xii^{2\rho}\sin(\pi\rho))\bigr)\bigr| \lesssim \xii^{d-|\gamma|}\,e^{-c\xii^{2\rho}}, \]
as in~\eqref{eq:expcompensate}, instead of estimate~\eqref{eq:goodestimate}.
\end{proof}
Recalling that~$\rho=1/(1+\alpha)\in(1/2,1)$, we are now ready to estimate~$G_{1/\rho,1}(1,\cdot)$.
\begin{proposition}\label{prop:E1}
For any~$\rho\in(1/2,1)$, it holds
\[ G_{1/\rho,1}(1,\cdot)\in L^p, \]
for any~$p\in[1,\infty]$ such that
\begin{equation}\label{eq:boundLp}
\frac{n}2\left(1-\frac1p\right)<1.
\end{equation}
\end{proposition}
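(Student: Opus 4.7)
The plan is to use Theorem~\ref{thm:ML} with $\beta=1$ and $m=0$, applied after the substitution $z=\xii^{2\rho}$ (so that $z^{1/\rho}=\xii^2$, which matches the argument of $E_{1/\rho,1}$ in the definition~\eqref{eq:Gdef}). The choice $m=0$ is admissible because $m\ge \rho\beta-1=\rho-1<0$ for $\rho\in(1/2,1)$. With $m=0$, the finite sum $\sum_{k=1}^{m}$ in Theorem~\ref{thm:ML} is empty, and furthermore $\sin(\pi(\beta-m/\rho))=\sin(\pi)=0$, so the $I_{2,0}$ contribution to $\Omega_0$ drops out. This leaves the clean decomposition
\[
E_{1/\rho,1}(-\xii^2)=2\rho\,e^{\xii^{2\rho}\cos(\pi\rho)}\cos(\xii^{2\rho}\sin(\pi\rho))+\frac{\sin(\pi(1-1/\rho))}{\pi}\,I_{1,0}(\xii^{2\rho}),
\]
so that $G_{1/\rho,1}(1,\cdot)=2\rho\,K_{1/\rho,0}+R$, where $R$ is the inverse Fourier transform of the second summand.

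The first piece is immediate: Lemma~\ref{lem:exp} with $d=0$ gives $K_{1/\rho,0}\in L^p$ for every $p\in[1,\infty]$ (no restriction at all), so $K_{1/\rho,0}$ contributes nothing to the condition~\eqref{eq:boundLp}.

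For the second piece, I would use the explicit integral expression
\[
I_{1,0}(\xii^{2\rho})=\int_0^\infty \frac{s^{1/\rho-1}}{s^{2/\rho}+2\cos(\pi/\rho)s^{1/\rho}+1}\,e^{-s\xii^{2\rho}}\,ds,
\]
commute $\mathfrak{F}^{-1}$ with the $s$-integral by Fubini, and recognize the inside integrand in $x$ as $H_{1/\rho,0}(s,\cdot)$ from~\eqref{eq:Hdef}. Minkowski's integral inequality then gives
\[
\|R\|_{L^p}\lesssim \int_0^\infty \frac{s^{1/\rho-1}}{s^{2/\rho}+2\cos(\pi/\rho)s^{1/\rho}+1}\,\|H_{1/\rho,0}(s,\cdot)\|_{L^p}\,ds.
\]
By Lemma~\ref{lem:H} with $d=0$ we have $\|H_{1/\rho,0}(1,\cdot)\|_{L^p}<\infty$ for every $p\in[1,\infty]$, and the scaling identity~\eqref{eq:scalings} yields $\|H_{1/\rho,0}(s,\cdot)\|_{L^p}=s^{-a}\|H_{1/\rho,0}(1,\cdot)\|_{L^p}$ with $a=\frac{n}{2\rho}(1-\frac{1}{p})$.

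The remaining task is to show that the scalar integral
\[
\int_0^\infty \frac{s^{1/\rho-1-a}}{s^{2/\rho}+2\cos(\pi/\rho)s^{1/\rho}+1}\,ds
\]
is finite precisely when~\eqref{eq:boundLp} holds. The denominator is a positive polynomial in $s^{1/\rho}$ whose roots are complex (its discriminant is $4(\cos^2(\pi/\rho)-1)<0$, since $\rho\in(1/2,1)$ strictly excludes $\cos(\pi/\rho)=\pm1$), hence it is bounded below away from zero on every compact subset of $(0,\infty)$. Near $s=0$ the integrand behaves like $s^{1/\rho-1-a}$, integrable iff $a<1/\rho$, which is exactly $\frac{n}{2}(1-\frac{1}{p})<1$; near $s=\infty$ the integrand behaves like $s^{-1/\rho-1-a}$, integrable automatically since $a\ge 0$ and $\rho>0$. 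The main subtlety is thus simply tracking how the $s$-scaling in~\eqref{eq:scalings} combines with the $s^{1/\rho-1}$ weight in $I_{1,0}$ so that the threshold $a=1/\rho$ translates cleanly into the dimensional condition~\eqref{eq:boundLp}; no further obstacles arise.
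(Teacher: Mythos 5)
Your proposal is correct and follows essentially the same route as the paper: the same application of Theorem~\ref{thm:ML} with $m=0$ and $\sin\pi=0$ to reduce $G_{1/\rho,1}(1,\cdot)$ to $2\rho K_{1/\rho,0}$ plus an $s$-integral of $H_{1/\rho,0}(s,\cdot)$, then Lemmas~\ref{lem:exp} and~\ref{lem:H} together with the scaling identity~\eqref{eq:scalings} and the convergence of the scalar integral under~\eqref{eq:boundLp}. You merely make explicit the Minkowski-inequality step and the endpoint analysis of the $s$-integral that the paper leaves implicit.
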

\begin{proof}
According to Theorem~\ref{thm:ML},
\[ G_{1/\rho,1}(1,\cdot) = 2\rho\,K_{1/\rho,0} + \pi^{-1}\,\sin(\pi(1-1/\rho))\,\int_0^\infty \frac{s^{1/\rho-1}}{s^{2/\rho}+2\cos(\pi/\rho)\,s^{1/\rho}+1}\,H_{1/\rho,0}(s,x)\,ds, \]
taking~$m=\ceil{\rho-1}=0$, and~$\sin\pi=0$. Therefore, having in mind~\eqref{eq:scalings}, the proof follows from Lemmas~\ref{lem:H} and~\ref{lem:exp} if the integral
\[ \int_0^\infty \frac{s^{1/\rho-1-\frac{n}{2\rho}\left(1-\frac1p\right)}\,}{s^{2/\rho}+2\cos(\pi/\rho)\,s^{1/\rho}+1}\,ds \]
converges, that is, if
\[-1<\frac{n}2\left(1-\frac1p\right)<1.\]
This concludes the proof.
\end{proof}

\subsection{Estimate for~$G_{1/\rho,1/\rho}$}

Proceeding as in Proposition~\ref{prop:E1}, we have the following preliminary result for~$G_{1/\rho,1/\rho}$.
\begin{lemma}\label{lem:E1alphaeasy}
For any~$\rho\in(1/2,1)$, it holds
\[ G_{1/\rho,1/\rho}(1,\cdot)\in L^p, \]
for any~$p\in(1,\infty]$ such that
\begin{equation}\label{eq:preboundLpalpha}
1-\rho < \frac{n}{2}\left(1-\frac1p\right)<2.
\end{equation}
\end{lemma}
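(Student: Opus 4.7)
The plan is to mimic the proof of Proposition~\ref{prop:E1}, adapting it to the case~$\beta=1/\rho$. The minimal admissible choice in Theorem~\ref{thm:ML} is~$m=0$, since we need~$m\geq\rho\beta-1=0$. With~$m=0$, the finite sum~$\sum_{k=1}^m$ is empty, so only the exponential term and~$\Omega_0$ survive. In~$\Omega_0$ the coefficient of~$I_{1,0}$ is~$\sin(\pi(\beta-1/\rho))=\sin 0 =0$, so the~$I_{1,0}$ contribution vanishes and one is left with
\[
\Omega_0(z)=\pi^{-1}\sin(\pi/\rho)\,z^{1-1/\rho}\,I_{2,0}(z),
\qquad
I_{2,0}(z)=\int_0^\infty \frac{s^{1/\rho}}{s^{2/\rho}+2\cos(\pi/\rho)\,s^{1/\rho}+1}\,e^{-zs}\,ds.
\]
Setting~$z=\xii^{2\rho}$, so that~$z^{1-1/\rho}=\xii^{2(\rho-1)}$, and taking the inverse Fourier transform, I would write
\[
G_{1/\rho,1/\rho}(1,x)
= 2\rho\,\mathfrak{F}^{-1}\!\bigl(\xii^{2(\rho-1)}\,e^{\xii^{2\rho}\cos(\pi\rho)}\,\cos(\xii^{2\rho}\sin(\pi\rho)-\pi(1-\rho))\bigr)
+\frac{\sin(\pi/\rho)}{\pi}\int_0^\infty \frac{s^{1/\rho}\,H_{1/\rho,2(\rho-1)}(s,x)}{s^{2/\rho}+2\cos(\pi/\rho)\,s^{1/\rho}+1}\,ds,
\]
where~$\pi\rho(\beta-1)=\pi(1-\rho)$ produces the phase shift in the cosine.

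For the first (exponentially localized) piece, I would expand~$\cos(A-\pi(1-\rho))=\cos(\pi(1-\rho))\cos A+\sin(\pi(1-\rho))\sin A$. The cosine summand is precisely~$K_{1/\rho,2(\rho-1)}$ of Lemma~\ref{lem:exp}; the sine summand is the obvious analog and is controlled by the same argument (the proof of Lemma~\ref{lem:exp} only uses the decay~$e^{\xii^{2\rho}\cos(\pi\rho)}$ and the symbolic estimates on derivatives, which are identical for sine). Since~$d\doteq 2(\rho-1)\in(-1,0)\subset(-n,0)$ (the condition~$d>-n$ holds because~$\rho>1/2$ forces~$2(1-\rho)<1\leq n$), Lemma~\ref{lem:exp} yields membership in~$L^p$ for any~$p\in(1,\infty]$ with~$n(1-1/p)>-d=2(1-\rho)$, which is exactly the lower inequality~$\frac{n}{2}(1-1/p)>1-\rho$.

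For the second (integral) piece, I would apply the scaling identity~\eqref{eq:scalings} with~$d=2(\rho-1)$, which gives
\[
\|H_{1/\rho,2(\rho-1)}(s,\cdot)\|_{L^p}
= s^{-\frac{n}{2\rho}(1-\frac1p)+\frac1\rho-1}\,\|H_{1/\rho,2(\rho-1)}(1,\cdot)\|_{L^p},
\]
and insert this into the Minkowski-type estimate for the integral. The finiteness of~$\|H_{1/\rho,2(\rho-1)}(1,\cdot)\|_{L^p}$ comes from Lemma~\ref{lem:H} under the same lower bound~$\frac{n}{2}(1-1/p)>1-\rho$ already imposed. What remains is convergence of
\[
\int_0^\infty \frac{s^{2/\rho-1-\frac{n}{2\rho}(1-\frac1p)}}{s^{2/\rho}+2\cos(\pi/\rho)\,s^{1/\rho}+1}\,ds.
\]
Since~$\rho>1/2$ gives discriminant~$-4\sin^2(\pi/\rho)<0$, the denominator is bounded below by a positive constant on compact sets and grows like~$s^{2/\rho}$ at infinity. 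Hence integrability near~$s=0$ requires~$2/\rho-1-\frac{n}{2\rho}(1-1/p)>-1$, i.e.~$\frac{n}{2}(1-1/p)<2$ (the upper inequality of~\eqref{eq:preboundLpalpha}), and integrability near~$s=\infty$ requires~$\frac{n}{2\rho}(1-1/p)>0$, which is exactly~$p>1$.

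I do not expect a serious obstacle: the routine is essentially mechanical once the right~$m$, $\beta$, and~$d$ are identified. The only point demanding care is the bookkeeping between the shift~$\beta-1=1/\rho-1$, the factor~$z^{1-1/\rho}$ producing~$d=2(\rho-1)$, and the power~$s^{1/\rho}$ in the numerator of~$I_{2,0}$, whose interplay with the scaling weight~$s^{1/\rho-1-\frac{n}{2\rho}(1-1/p)}$ produces the upper bound~$\frac{n}{2}(1-1/p)<2$ that distinguishes this lemma from Proposition~\ref{prop:E1}.
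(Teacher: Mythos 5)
Your proposal is correct and follows essentially the same route as the paper: apply Theorem~\ref{thm:ML} with $m=0$ and $\beta=1/\rho$, note that the $I_{1,0}$ contribution vanishes since $\sin(\pi(\beta-1/\rho))=0$, identify $d=-2(1-\rho)$, and invoke Lemmas~\ref{lem:exp} and~\ref{lem:H} together with the scaling identity~\eqref{eq:scalings} and the convergence of the resulting $s$-integral, which yields precisely the two bounds in~\eqref{eq:preboundLpalpha}. Your explicit handling of the phase shift $\pi\rho(\beta-1)=\pi(1-\rho)$ in the cosine (by expanding into the $K_{1/\rho,d}$ term plus its sine analog) is a small point the paper glosses over, and it is treated correctly.
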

\begin{proof}
According to Theorem~\ref{thm:ML},
\[ G_{1/\rho,1/\rho}(1,\cdot) = 2\rho\,K_{1/\rho,-2(1-\rho)} + \pi^{-1}\,\sin(\pi/\rho)\,\int_0^\infty \frac{s^{1/\rho}}{s^{2/\rho}+2\cos(\pi/\rho)\,s^{1/\rho}+1}\,H_{1/\rho,-2(1-\rho)}(s,x)\,ds, \]
due to~$m=0$, and~$\sin0=0$. We may apply Lemmas~\ref{lem:H} and~\ref{lem:exp} if~\eqref{eq:Lpd} holds with~$d=-2(1-\rho)$, i.e., if
\begin{equation}\label{eq:preboundLpalphabelow}
\frac{n}2\left(1-\frac1p\right)>1-\rho,
\end{equation}
which is guaranteed by the left-hand side of~\eqref{eq:preboundLpalpha}.

Therefore, having in mind~\eqref{eq:scalings}, the proof follows from Lemmas~\ref{lem:H} and~\ref{lem:exp} if the integral
\[ \int_0^\infty \frac{s^{2/\rho-1-\frac{n}{2\rho}\left(1-\frac1p\right)}\,}{s^{2/\rho}+2\cos(\pi/\rho)\,s^{1/\rho}+1}\,ds \]
converges, that is, if
\[ 0<\frac{n}2\left(1-\frac1p\right)<2, \]
which is guaranteed by the right-hand side of~\eqref{eq:preboundLpalpha}. This concludes the proof.
\end{proof}
In order to relax bound~\eqref{eq:preboundLpalphabelow}, we may modify our approach, relying on the use of the representation in Theorem~\ref{thm:ML} only at large frequencies.
\begin{remark}\label{rem:entire}
The function~$E_{1/\rho,\beta}(-\xii^2)$ is in~$\mathcal C^\infty$ (as a complex-valued function, $E_{1/\rho,\beta}(z)$ is entire). Let~$\chi$ be a~$\mathcal C^\infty$ radial function, vanishing in the ball~$\{\xii\leq1\}$, satisfying~$0\leq\chi\leq1$, and~$\chi=1$ out of some compact set. Then~$(1-\chi(\xi))E_{1/\rho,\beta}$ is in the Schwartz space~$\mathcal S$, in particular
\[ \mathfrak{F}^{-1} \bigl((1-\chi(\xi))E_{1/\rho,\beta}(-\xii^2)\bigr) \]
is in~$L^1\cap L^\infty$. We define
\begin{align}
\label{eq:Ktildedef}
\tilde K_{1/\rho,d}
    & =\mathfrak{F}^{-1}\bigl(\chi\,\hat K_{1/\rho,d}\bigr), \\
\label{eq:Htildedef}
\tilde H_{1/\rho,d}(s,\cdot)
    & =\mathfrak{F}^{-1}\bigl(\chi\,\hat H_{1/\rho,d}(s,\cdot)\bigr),\quad \forall s>0.
\end{align}
It is clear that~$\tilde K_{1/\rho,d} \in L^1\cap L^\infty$ for any~$d\in\R$, since~$\chi\,\hat K_{1/\rho,d}\in\mathcal S$. Similarly, $\tilde H_{1/\rho,d}(s,\cdot) \in L^1\cap L^\infty$ for any~$d\in\R$ and~$s>0$, but its norm depends, in general, on~$s$.
\end{remark}
In view of Remark~\ref{rem:entire}, we consider~$\tilde H_{1/\rho,d}(s,\cdot)$. We first consider the range of exponents~$p$ for which the~$L^p$ norm of~$\tilde H_{1/\rho,d}(s,\cdot)$ is uniformly bounded, with respect to~$s$.
\begin{lemma}\label{lem:Htilde}
Let~$\rho>0$ and~$d<0$. Let~$p\in[1,\infty]$, be such that
\begin{equation}\label{eq:Lpdback}
n\left(1-\frac1p\right)<-d.
\end{equation}
Then
\[ \|\tilde H_{1/\rho,d}(s,\cdot)\|_{L^p} \leq C, \]
uniformly with respect to~$s$.
\end{lemma}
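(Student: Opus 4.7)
The plan is to establish pointwise bounds on~$\tilde H_{1/\rho, d}(s, x)$ which are uniform in~$s \geq 0$, and then to read off~$L^p$-integrability from the condition~\eqref{eq:Lpdback}. The target estimates are~$|\tilde H_{1/\rho, d}(s, x)| \lesssim |x|^{-(n+d)}$ for~$|x| \leq 1$, and~$|\tilde H_{1/\rho, d}(s, x)| \lesssim |x|^{-k}$ for~$|x| \geq 1$ (with~$k$ arbitrarily large), both holding uniformly in~$s$.

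For the large-$|x|$ regime, I would perform integration by parts~$k$ times in the defining Fourier integral of~$\tilde H_{1/\rho,d}(s,\cdot)$ (see~\eqref{eq:Htildedef}), following the same scheme as in the proof of Lemma~\ref{lem:H}. Using Leibniz's rule together with an estimate analogous to~\eqref{eq:expcompensate}, namely
\[
\bigl|\partial_\xi^\gamma\bigl(\chi(\xi)\,\xii^d\,e^{-s\xii^{2\rho}}\bigr)\bigr| \lesssim \xii^{d-|\gamma|}\,e^{-c\,s\xii^{2\rho}},\qquad \xii \geq 1,
\]
for some~$c\in(0,1)$, the resulting integral over~$\{\xii \geq 1\}$ converges uniformly in~$s$ as long as~$k > n+d$. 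Since the hypothesis~\eqref{eq:Lpdback} is equivalent to~$n/p > n+d$, I can pick any~$k>n/p$, which ensures that~$|x|^{-k}$ is~$L^p$-integrable on~$\{|x|\geq 1\}$.

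For the small-$|x|$ regime (and~$d\in(-n,0)$), I would split the Fourier integral at frequency~$\xii \sim |x|^{-1}$ by inserting a smooth cutoff~$\phi(|x|\xi)$, with~$\phi \equiv 1$ near the origin and compactly supported in~$\R^n$. On the low-frequency piece~$I_0$ the exponential is harmless (bounded by~$1$), so a direct estimate gives
\[
|I_0(s,x)| \lesssim \int_{1\leq\xii\leq|x|^{-1}}\xii^{d}\,d\xi \lesssim |x|^{-(n+d)}.
\]
On the high-frequency piece~$I_1$, I would integrate by parts~$k$ times; each derivative falling on~$\phi(|x|\xi)$ produces a factor of~$|x|$, which is compensated by the fact that~$\nabla\phi(|x|\xi)$ is supported in~$\xii\sim|x|^{-1}$. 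A careful accounting of the mixed derivatives yields the same bound~$|I_1(s,x)|\lesssim|x|^{-(n+d)}$ whenever~$k > n+d$.

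The main obstacle is the bookkeeping in the high-frequency piece~$I_1$, where the boundary-like terms from the derivatives of the cutoff~$\phi(|x|\xi)$ must be balanced against the~$\xii^{d-|\gamma|}$ estimates so that all contributions match the target rate~$|x|^{-(n+d)}$. Once the uniform pointwise bound~$|\tilde H_{1/\rho,d}(s,x)|\lesssim \min\{|x|^{-(n+d)},\,|x|^{-k}\}$ is secured, the~$L^p$ norm is finite precisely when~$(n+d)p<n$, that is, when~\eqref{eq:Lpdback} holds, and the resulting bound is uniform in~$s$. The degenerate case~$d\leq-n$ is even easier: then~$\chi(\xi)\,\xii^{d}\in L^1$, so~$\tilde H_{1/\rho,d}(s,\cdot)\in L^\infty$ uniformly in~$s$ by Riemann--Lebesgue, while the large-$|x|$ decay is unaffected.
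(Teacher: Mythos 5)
Your argument is essentially the paper's own proof: both rely on repeated integration by parts in the Fourier integral over $\{\xii\geq1\}$, using the $s$-uniform bound $|\partial_\xi^\gamma(\chi(\xi)\xii^{d}e^{-s\xii^{2\rho}})|\lesssim\xii^{d-|\gamma|}$ to get $|x|^{-(n+1)}$ decay for large $|x|$, and a frequency splitting at $\xii\sim|x|^{-1}$ to get the $|x|^{-(n+d)}$ bound for small $|x|$; your smooth cutoff $\phi(|x|\xi)$ versus the paper's sharp splitting (with one extra integration by parts on the outer piece) is an immaterial variation. The only slip is the endpoint $d=-n$: there $\chi(\xi)\xii^{d}\notin L^1$, so your ``degenerate case $d\leq-n$'' claim fails, and you have excluded $d=-n$ from the small-$|x|$ analysis as well. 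This is easily repaired — the same splitting gives $|\tilde H_{1/\rho,-n}(s,x)|\lesssim-\log|x|$ near the origin (the paper's version of the splitting records exactly such a logarithm when $n+d$ is an integer), which suffices because \eqref{eq:Lpdback} forces $p<\infty$ when $d=-n$ — but as written your case distinction leaves that single value uncovered.
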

\begin{proof}
We use the scheme of integration by parts in the proof of Lemma~\ref{lem:H}. Due to
\[ |\partial_{\xi}^\gamma e^{-s\xii^{2\rho}}| \leq C \xii^{-|\gamma|}, \]
with~$C$ independent of~$s$, we may estimate
\[ |\partial_\xi^\gamma(\chi(\xi)\xii^{d}\,e^{-s\xii^{2\rho}})| \lesssim \xii^{d-|\gamma|}, \]
for any~$\gamma$. For large~$|x|$, after~$n+1$ steps of integration by parts, we derive
\[ |\tilde H_{1/\rho,d}(s,x)| \lesssim |x|^{-(n+1)}\int_{\xii\geq1} \xii^{d-(n+1)}\,d\xi \lesssim |x|^{-(n+1)}. \]
If~$d<-n$, by Riemann-Lebesgue theorem, we directly obtain
\[ \|\tilde H_{1/\rho,d}(s,\cdot)\|_{L^\infty} \lesssim \int_{\xii\geq1} \xii^d\,d\xi \leq C, \]
and the thesis follows. Let~$d\in[-n,0)$; we set~$\kappa = \floor{n+d}$, that is, $\kappa\in(n+d-1,n+d]$, integer. For small~$|x|$, after $\kappa$ steps of integration by parts, we split each integral into two parts:
\begin{multline*}
\int_{\xii\geq1} e^{ix\xi}\,\partial_\xi^\gamma\left(\chi(\xi)\xii^d\,e^{-s\xii^{2\rho}}\right)\,d\xi= I_0(s,x)+I_1(s,x) \\
= \int_{1\leq\xii\leq |x|^{-1}}e^{ix\xi}\,\partial_\xi^\gamma\left(\chi(\xi)\xii^{d}\,e^{-s\xii^{2\rho}}\right)\,d\xi + \int_{\xii\geq |x|^{-1}}e^{ix\xi}\,\partial_\xi^\gamma\left(\chi(\xi)\xii^d\,e^{-s\xii^{2\rho}}\right)\,d\xi.
\end{multline*}
On the one hand,
\[ |I_0(s,x)| \lesssim \int_{1\leq\xii\leq |x|^{-1}} \xii^{d-\kappa}\,d\xi \lesssim \begin{cases}
|x|^{-(d+n-\kappa)} & \text{if~$d\in(\kappa-n,\kappa+1-n)$,}\\
-\log|x| & \text{if~$d=\kappa-n$.}
\end{cases} \]
On the other hand, performing one additional step of integration by parts in~$I_1$, we obtain
\[ |I_1(s,x)| \lesssim |x|^{-1}\,\int_{\xii=|x|^{-1}} \xii^{d-\kappa}\,d\sigma + |x|^{-1}\,\int_{\xii\geq |x|^{-1}}\,\xii^{d-\kappa-1}\,d\xi\lesssim |x|^{-(d+n-\kappa)}.\]
Summarizing, we obtained, for small~$|x|$,
\[ |\tilde H_{1/\rho,d}(s,x)|\lesssim \begin{cases}
|x|^{-(n+d)}(-\log|x|) & \text{if~$d$ is integer,}\\
|x|^{-(n+d)} & \text{if~$d$ is not integer.}
\end{cases} \]
Together with the previous estimate for large~$|x|$, this proves that
\[ \|\tilde H_{1/\rho,d}(s,\cdot) \|_{L^p}\leq C, \]
uniformly with respect to~$s>0$, for any~$p$ such that~\eqref{eq:Lpdback} holds.
\end{proof}
\begin{proposition}\label{prop:Ealpha}
For any~$\rho\in(1/2,1)$, it holds
\[ G_{1/\rho,1/\rho}(1,\cdot)\in L^p, \]
for any~$p\in[1,\infty]$ such that
\begin{equation}\label{eq:boundLpalpha}
\frac{n}2\left(1-\frac1p\right)<2.
\end{equation}
\end{proposition}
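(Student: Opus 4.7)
The plan is to combine Lemma~\ref{lem:E1alphaeasy} with a low/high-frequency cutoff, so as to drop the lower restriction~\eqref{eq:preboundLpalphabelow}. Since Lemma~\ref{lem:E1alphaeasy} already covers every~$p\in(1,\infty]$ for which $1-\rho<\frac{n}{2}(1-\frac1p)<2$, only the regime $\frac{n}{2}(1-\frac1p)\leq 1-\rho$ (which contains the endpoint~$p=1$, since~$\rho<1$) remains.

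For such values of~$p$, I would use the smooth cutoff~$\chi$ introduced in Remark~\ref{rem:entire} to split
\[ G_{1/\rho,1/\rho}(1,\cdot) = \mathfrak{F}^{-1}\bigl((1-\chi)\,E_{1/\rho,1/\rho}(-\xii^2)\bigr) + \mathfrak{F}^{-1}\bigl(\chi\,E_{1/\rho,1/\rho}(-\xii^2)\bigr). \]
The first summand is in~$\mathcal S$ by Remark~\ref{rem:entire}, hence in every~$L^p$. For the second summand I would apply Theorem~\ref{thm:ML} pointwise in~$\xi$ with $\beta=1/\rho$ and $m=0$ (the $k=1$ algebraic term vanishes because $\Gamma(0)^{-1}=0$), multiply by~$\chi(\xi)$, and exchange the inverse Fourier transform with the $s$-integral via Fubini, obtaining the cutoff analogue of the representation used in Lemma~\ref{lem:E1alphaeasy}:
\[ \mathfrak{F}^{-1}\bigl(\chi\,E_{1/\rho,1/\rho}(-\xii^2)\bigr) = 2\rho\,\tilde K_{1/\rho,-2(1-\rho)} + \frac{\sin(\pi/\rho)}{\pi}\int_0^\infty \frac{s^{1/\rho}\,\tilde H_{1/\rho,-2(1-\rho)}(s,\cdot)}{s^{2/\rho}+2\cos(\pi/\rho)\,s^{1/\rho}+1}\,ds. \]

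The leading term~$\tilde K_{1/\rho,-2(1-\rho)}$ is itself in~$\mathcal S$: its Fourier transform $\chi(\xi)\xii^{-2(1-\rho)}e^{\xii^{2\rho}\cos(\pi\rho)}\cos(\xii^{2\rho}\sin(\pi\rho))$ is smooth (the cutoff~$\chi$ removes the origin singularity of $\xii^{-2(1-\rho)}$) and decays exponentially at infinity, since $\cos(\pi\rho)<0$. For the integral term, Lemma~\ref{lem:Htilde} applied with $d=-2(1-\rho)$ yields the uniform bound $\|\tilde H_{1/\rho,-2(1-\rho)}(s,\cdot)\|_{L^p}\leq C$, precisely because the assumption $\frac{n}{2}(1-\frac1p)<1-\rho$ is $n(1-\frac1p)<-d$. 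The scalar weight is non-singular on $(0,\infty)$ (since $|\cos(\pi/\rho)|<1$, the denominator has no real zero), it behaves like~$s^{1/\rho}$ near~$0$ and like~$s^{-1/\rho}$ at infinity, and is therefore integrable because $1/\rho>1$. Minkowski's integral inequality then closes the argument. The borderline $\frac{n}{2}(1-\frac1p)=1-\rho$, if not directly in reach, is recovered by Riesz--Thorin interpolation between two admissible exponents in the two treated ranges.

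The main obstacle is the rigorous Fubini exchange that upgrades the pointwise-in-$\xi$ representation of Theorem~\ref{thm:ML} to an $L^p$-valued Bochner integral in~$s$; this reduces to dominating the double integrand by the product of the scalar weight above and the $s$-uniform $L^p$ bound on $\tilde H_{1/\rho,-2(1-\rho)}(s,\cdot)$ supplied by Lemma~\ref{lem:Htilde}.
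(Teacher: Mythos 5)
Your argument is correct and essentially coincides with the paper's proof: the paper likewise invokes Lemma~\ref{lem:E1alphaeasy} for the range it covers, uses the cutoff $\chi$ of Remark~\ref{rem:entire} together with the Theorem~\ref{thm:ML} representation to write the high-frequency part as $2\rho\,\tilde K_{1/\rho,-2(1-\rho)}$ plus the weighted integral of $\tilde H_{1/\rho,-2(1-\rho)}(s,\cdot)$, and closes via Lemma~\ref{lem:Htilde} and the convergence of the scalar weight. The only cosmetic difference is that the paper establishes just the endpoint $p=1$ by this route and interpolates with Lemma~\ref{lem:E1alphaeasy}, whereas you treat the whole complementary range $\tfrac{n}{2}(1-\tfrac1p)<1-\rho$ directly and interpolate only at the borderline.
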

\begin{proof}
In view of Lemma~\ref{lem:E1alphaeasy}, $G_{1/\rho,1/\rho}(1,\cdot)\in L^p$ for some~$p$, verifying~\eqref{eq:preboundLpalpha}. Indeed, such~$p$ exists, due to~$1-\rho<1/2$. Therefore, it is sufficient to prove our statement for~$p=1$, so that, by interpolation, we conclude the proof.

Due to Remark~\ref{rem:entire} and to the representation in Theorem~\ref{thm:ML}
\begin{align*}
\mathfrak{F}^{-1}\bigl(\chi(\xi)\,E_{1/\rho,1/\rho}(1,-\xii^2)\bigr)
    & = 2\rho\,\tilde K_{1/\rho,-2(1-\rho)} \\
    & \qquad + \pi^{-1}\,\sin(\pi/\rho)\,\int_0^\infty \frac{s^{1/\rho}}{s^{2/\rho}+2\cos(\pi/\rho)\,s^{1/\rho}+1}\,\tilde H_{1/\rho,-2(1-\rho)}(s,\cdot)\,ds,
\end{align*}
the proof of our statement follows if
\[ \int_0^\infty \frac{s^{1/\rho}}{s^{2/\rho}+2\cos(\pi/\rho)\,s^{1/\rho}+1}\,\tilde H_{1/\rho,-2(1-\rho)}(s,x)\,ds \]
belongs to~$L^1$. Using Lemma~\ref{lem:Htilde} with~$p=1$, this latter property holds due to the convergence of the integral (see Remark~\ref{rem:uniformI})
\[ \int_0^\infty \frac{s^{1/\rho}}{s^{2/\rho}+2\cos(\pi/\rho)\,s^{1/\rho}+1}\,ds, \]
and this concludes the proof.
\end{proof}

\subsection{Estimate for~$G_{1/\rho,2}$}

The estimate for~$G_{1/\rho,2}$ is more difficult to be obtained, due to the fact that the representation of $E_{1/\rho,2}(-\xii^2)$ given by Theorem~\ref{thm:ML} contains a Riesz potential term~$(-\Delta)^{-1}$, taking~$m=\ceil{2\rho-1}=1$. For this reason, it is more convenient to rely on Remark~\ref{rem:entire} for any~$p$. Indeed, if we use the representation in Theorem~\ref{thm:ML} only for large~$\xii$, the Riesz potential behaves like a Bessel potential~$(1-\Delta)^{-1}$, whose mapping properties are better.

We conveniently modify Lemma~\ref{lem:H}.
\begin{lemma}\label{lem:Htilde2}
Let~$\rho\in(1/2,1)$ and~$d>-n$. Then we may distinguish two cases.
\begin{itemize}
\item If~$d>0$, then
\begin{equation}\label{eq:Htildes}
\|\tilde H_{1/\rho,d}(s,\cdot)\|_{L^p}\leq C\, s^{-\frac{d}{2\rho}-\frac{n}{2\rho}\left(1-\frac1p\right)},
\end{equation}
for any~$p\in[1,\infty]$, where~$C>0$ does not depend on~$s$.
\item If~$d\in(-n,0]$, then~\eqref{eq:Htildes} holds for any~$p\in(1,\infty]$, such that~\eqref{eq:Lpd} holds.
\end{itemize}
\end{lemma}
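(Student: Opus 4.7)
The plan is to use a scaling argument to reduce the estimate to a uniform-in-$s$ $L^p$ bound on a rescaled kernel, and then to handle that bound by splitting off the cutoff correction and applying the convolution theorem together with Lemma~\ref{lem:H}.

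First, by the change of variables $\eta=s^{1/(2\rho)}\xi$ in the inverse Fourier transform defining $\tilde H_{1/\rho,d}(s,\cdot)$, one obtains
\[ \tilde H_{1/\rho,d}(s,x) = s^{-(n+d)/(2\rho)}\,\Phi_s\bigl(s^{-1/(2\rho)}x\bigr),\qquad \Phi_s\doteq \mathfrak{F}^{-1}\bigl(\chi_s(\eta)\,\etta^d\,e^{-\etta^{2\rho}}\bigr), \]
where $\chi_s(\eta)\doteq\chi(s^{-1/(2\rho)}\eta)$. Taking $L^p$ norms and applying the standard dilation identity gives $\|\tilde H_{1/\rho,d}(s,\cdot)\|_{L^p}=s^{-d/(2\rho)-(n/(2\rho))(1-1/p)}\|\Phi_s\|_{L^p}$, so the statement is equivalent to the uniform bound $\|\Phi_s\|_{L^p}\leq C$ for every $s>0$, under the hypotheses of the lemma.

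To establish that uniform bound, I would write $\chi_s=1-(1-\chi_s)$ and split
\[ \Phi_s=H_{1/\rho,d}(1,\cdot)-\Psi_s,\qquad \Psi_s\doteq \mathfrak{F}^{-1}\bigl((1-\chi_s)\,\etta^d\,e^{-\etta^{2\rho}}\bigr). \]
The first summand lies in $L^p$ by Lemma~\ref{lem:H}, whose hypotheses on $(d,p)$ match exactly those of the present lemma. For $\Psi_s$, the Fourier convolution theorem yields $\Psi_s=\mathfrak{F}^{-1}(1-\chi_s)\ast H_{1/\rho,d}(1,\cdot)$. The key observation is that $1-\chi\in\mathcal{C}_c^\infty\subset\mathcal{S}$, so $\mathfrak{F}^{-1}(1-\chi)\in\mathcal{S}\subset L^1$; moreover, the dilation identity yields $\mathfrak{F}^{-1}(1-\chi_s)(y)=s^{n/(2\rho)}\,\mathfrak{F}^{-1}(1-\chi)(s^{1/(2\rho)}y)$, whose $L^1$ norm equals $\|\mathfrak{F}^{-1}(1-\chi)\|_{L^1}$ and is therefore independent of $s$. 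Young's inequality then delivers $\|\Psi_s\|_{L^p}\leq \|\mathfrak{F}^{-1}(1-\chi)\|_{L^1}\,\|H_{1/\rho,d}(1,\cdot)\|_{L^p}\leq C$, uniformly in $s$.

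Combining the two contributions gives $\|\Phi_s\|_{L^p}\leq C$ uniformly, and the scaling identity of the first step delivers the desired estimate. The conditions on $d$ and $p$ enter only through the single application of Lemma~\ref{lem:H}, so no further case distinction is required. The main conceptual point, rather than a serious technical obstacle, is recognizing that the decomposition $\chi_s=1-(1-\chi_s)$ brings the problem into a form where Young's inequality applies with an $L^1$-kernel whose norm is $s$-independent, a consequence of the fact that $1-\chi$ is smooth and compactly supported.
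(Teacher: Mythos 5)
Your proof is correct, and it takes a genuinely different route from the paper's at the key step. Both arguments begin with the same dilation $\xi\mapsto s^{-1/(2\rho)}\xi$, reducing the claim to a uniform-in-$s$ bound on $\|\mathfrak{F}^{-1}\bigl(\chi(s^{-1/(2\rho)}\cdot)\,|\cdot|^d e^{-|\cdot|^{2\rho}}\bigr)\|_{L^p}$. From there the paper re-runs the integration-by-parts machinery of Lemma~\ref{lem:H} with the rescaled cutoff inserted, the essential observation being that $\supp\partial_\xi^\gamma\chi$ is an annulus, so $|\partial_\xi^\gamma\chi(s^{-1/(2\rho)}\xi)|\lesssim|\xi|^{-|\gamma|}$ uniformly in $s$, after which the pointwise kernel bounds (and, for $d\le 0$, the Riesz potential argument) go through verbatim. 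You instead write $\chi(s^{-1/(2\rho)}\cdot)=1-(1-\chi)(s^{-1/(2\rho)}\cdot)$, observe that $1-\chi\in\mathcal C_c^\infty$ so that $\mathfrak{F}^{-1}\bigl((1-\chi)(s^{-1/(2\rho)}\cdot)\bigr)$ has $s$-independent $L^1$ norm by dilation invariance, and dispatch the correction term by Young's inequality against $H_{1/\rho,d}(1,\cdot)$, using Lemma~\ref{lem:H} as a black box (up to the harmless normalization constant in the convolution theorem). Your version is shorter and cleaner for this lemma, since the hypotheses on $(d,p)$ here coincide exactly with those of Lemma~\ref{lem:H}, so the splitting loses nothing; the paper's more hands-on method has the advantage of serving as a uniform template that also covers Lemmas~\ref{lem:Htilde} and~\ref{lem:Htilde3}, where the uncut kernel $H_{1/\rho,d}(1,\cdot)$ need not belong to $L^p$ and your decomposition would not apply.
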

\begin{proof}
We may follow the proof of Lemma~\ref{lem:H} with two modifications. First of all, we do no longer have a special representation when~$d=0$, so that this case shall be treated together with~$d\in(-n,0)$. Then, after applying~\eqref{eq:scalingt}, we get
\[ \|\tilde H_{1/\rho,d}(s,\cdot)\|_{L^p}= s^{-\frac{d}{2\rho}-\frac{n}{2\rho}\left(1-\frac1p\right)} \|H_{1/\rho,d}^\dagger(s,\cdot)\|_{L^p},\]
where we defined
\[ H_{1/\rho,d}^\dagger(s,\cdot) = \mathfrak{F}^{-1}\bigl(\chi(s^{-\frac1{2\rho}}\xi)\,\hat H_{1/\rho,d}(1,\cdot)\bigr). \]
Therefore, we shall discuss what happens when derivatives are applied to~$\chi(s^{-\frac1{2\rho}}\xi)$. Due to the fact that
\[ \supp \partial_\xi^\gamma\chi \subset \{1 \leq \xii \leq K\}, \]
for any~$\gamma\neq0$, for some~$K>1$ and that~$\chi$ is smooth, we derive that:
\[ |\partial_\xi^\gamma \chi(s^{-\frac1{2\rho}}\xi)| \lesssim \begin{cases}
s^{-\frac{|\gamma|}{2\rho}} & \text{if~$s^{\frac1{2\rho}}\leq \xii\leq Ks^{\frac1{2\rho}}$,}\\
0 & \text{otherwise.}
\end{cases} \]
In particular, $|\partial_\xi^\gamma \chi(s^{-\frac1{2\rho}}\xi)|\lesssim \xii^{-|\gamma|}$. Now, we are able to follow the proof of Lemma~\ref{lem:H} to estimate~$H_{1/\rho,d}^\dagger(s,\cdot)$, uniformly with respect to~$s$.

It is clear that~$H_{1/\rho,d}^\dagger(s,\cdot) \in L^\infty$ by Riemann-Lebesgue theorem, and
\[ \|H_{1/\rho,d}^\dagger(s,\cdot)\|_{L^\infty} \lesssim \int_{\R^n} \xii^d\,e^{-\xii^{2\rho}}\,d\xi \leq C, \]
uniformly with respect to~$s$, for any~$d>-n$. For~$d>0$ and~$p=1$, we proceed as in the proof of Lemma~\ref{lem:H}, in particular, replacing~\eqref{eq:expcompensate} by
\begin{equation}\label{eq:expcompensatedagger}
\bigl|\partial_\xi^\gamma\bigl(\chi(s^{-\frac1{2\rho}}\xi)\xii^{d}\,e^{-\xii^{2\rho}}\bigr)\bigr|\lesssim \xii^{d-|\gamma|}\,(1+\xii^{2\rho})^{|\gamma|}\,e^{-\xii^{2\rho}}\lesssim \xii^{d-|\gamma|}\,e^{-c\xii^{2\rho}},
\end{equation}
for some~$c\in(0,1)$. Finally, for~$d\in(-n,0]$, and~$p\in(1,\infty)$, verifying~\eqref{eq:Lpd}, we use Riesz potential mapping properties, as in the proof of Lemma~\ref{lem:H}.
\end{proof}
Lemmas~\ref{lem:Htilde} and~\ref{lem:Htilde2} are valid for all~$s>0$, but, for large values of~$s$, Lemma~\ref{lem:Htilde} is not useful, since the integral
\[ \int_0^\infty \frac{s^{3/\rho-2}\,}{s^{2/\rho}+2\cos(\pi/\rho)\,s^{1/\rho}+1}\,ds, \]
does not converge (see Remark~\ref{rem:uniformI}).
\begin{lemma}\label{lem:Htilde3}
Let~$\rho\in(1/2,1)$ and~$d\in\R$. Then, for any~$s\geq1$, it holds
\begin{equation}\label{eq:Htildeexps}
\|\tilde H_{1/\rho,d}(s,\cdot)\|_{L^p}\leq C\,e^{-cs},
\end{equation}
for any~$c\in(0,1)$, for some~$C>0$, independent of~$s$, and for any~$p\in[1,\infty]$.
\end{lemma}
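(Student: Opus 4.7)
The plan is to exploit the fact that, on the support of $\chi$ (where $\xii\geq 1$) and for $s\geq 1$, the factor $e^{-s\xii^{2\rho}}$ splits cleanly into an $s$-decaying piece and a $\xi$-decaying piece. The key pointwise inequality I would first establish is
\[ e^{-s\xii^{2\rho}}\leq e^{-cs}\,e^{-(1-c)\xii^{2\rho}},\qquad s\geq 1,\ \xii\geq 1,\ c\in(0,1), \]
which follows from the algebraic identity $s\xii^{2\rho}-cs-(1-c)\xii^{2\rho}=s(\xii^{2\rho}-c)-(1-c)\xii^{2\rho}$ and the bounds $s\geq 1$, $\xii^{2\rho}\geq 1$. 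This inequality is the engine behind every estimate below.

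For the $L^\infty$ bound I would apply the elementary estimate $\|\mathfrak{F}^{-1}\phi\|_{L^\infty}\lesssim\|\phi\|_{L^1}$ directly:
\[ \|\tilde H_{1/\rho,d}(s,\cdot)\|_{L^\infty}\lesssim\int_{\xii\geq 1}\xii^d\,e^{-s\xii^{2\rho}}\,d\xi\leq e^{-cs}\int_{\xii\geq 1}\xii^d\,e^{-(1-c)\xii^{2\rho}}\,d\xi\lesssim e^{-cs}, \]
where the remaining integral is finite for every $d\in\R$ by super-polynomial decay at infinity.

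For the $L^1$ bound I would follow the integration-by-parts scheme already used in Lemma~\ref{lem:H}. On $|x|\geq 1$ I integrate by parts $k=n+1$ times and reduce matters to bounding
\[ |\tilde H_{1/\rho,d}(s,x)|\lesssim |x|^{-(n+1)}\sum_{|\gamma|=n+1}\int\bigl|\partial_\xi^\gamma\bigl(\chi(\xi)\xii^d\,e^{-s\xii^{2\rho}}\bigr)\bigr|\,d\xi. \]
Each $\xi$-derivative of the exponential produces a factor of order $s\xii^{2\rho-1}$, so in the worst case one collects a polynomial $(s\xii^{2\rho})^{|\gamma|}\xii^{-|\gamma|}e^{-s\xii^{2\rho}}$; the polynomial growth is absorbed via the elementary bound $t^\ell e^{-t}\lesssim_\delta e^{-(1-\delta)t}$, and the resulting exponential $e^{-(1-\delta)s\xii^{2\rho}}$ is split once more as in Step~1 to give
\[ \bigl|\partial_\xi^\gamma\bigl(\chi(\xi)\xii^d\,e^{-s\xii^{2\rho}}\bigr)\bigr|\lesssim\mathbf{1}_{\xii\geq 1}\,\xii^{d-|\gamma|}\,e^{-cs}\,e^{-(1-\delta)(1-c)\xii^{2\rho}}, \]
for suitable $\delta>0$ and $c\in(0,1)$. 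Integrating in $\xi$ yields a finite $s$-independent constant multiplying $e^{-cs}$; combining with the $L^\infty$ estimate on the region $|x|\leq 1$, this gives $\|\tilde H_{1/\rho,d}(s,\cdot)\|_{L^1}\lesssim e^{-cs}$. The intermediate cases $p\in(1,\infty)$ then follow by standard interpolation between $L^1$ and $L^\infty$.

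The only genuinely delicate point is bookkeeping the $s$-dependence through the differentiation: each derivative of $e^{-s\xii^{2\rho}}$ spits out an extra factor of $s$ that would naively spoil the exponential decay, but this is neutralized by the polynomial absorption trick $t^\ell e^{-t}\lesssim_\delta e^{-(1-\delta)t}$. By taking $\delta$ small and $c$ close to $1$ one recovers the full range $c\in(0,1)$ claimed in the lemma, and uniformity in $d\in\R$ is automatic since the whole argument takes place away from the origin, so no Riesz-potential issues arise.
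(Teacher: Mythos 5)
Your proposal is correct and follows essentially the same route as the paper's proof: integration by parts on the region $\xii\geq1$ cut out by $\chi$, absorption of the polynomial factors produced by differentiating $e^{-s\xii^{2\rho}}$ (the paper states this as $|\partial_\xi^\gamma e^{-s\xii^{2\rho}}|\leq C\xii^{-|\gamma|}e^{-cs\xii^{2\rho}}$ for any $c\in(0,1)$, with $C$ independent of $s$), and then exactly your splitting $e^{-cs\xii^{2\rho}}\leq e^{-c_1s}e^{-(c-c_1)\xii^{2\rho}}$ valid for $s\geq1$, $\xii\geq1$. The only cosmetic difference is that you obtain the intermediate $p$ by interpolating between $L^1$ and $L^\infty$, while the paper reads off all $p\in[1,\infty]$ directly from the pointwise bound $|\tilde H_{1/\rho,d}(s,x)|\lesssim|x|^{-k}e^{-c_1s}$; these are equivalent.
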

\begin{proof}
We use the scheme of integration by parts in the proof of Lemma~\ref{lem:H}, but now we may restrict to consider~$\xii\geq1$, due to the presence of~$\chi(\xi)$, and we may use the assumption~$s\geq1$ to avoid singularity at~$s=0$. Due to
\[ |\partial_{\xi}^\gamma e^{-s\xii^{2\rho}}| \leq C\,\xii^{-|\gamma|}\,e^{-cs\xii^{2\rho}}, \]
for any~$c\in(0,1)$, for some~$C>0$ independent of~$s$, we may estimate
\[ |\partial_\xi^\gamma(\chi(\xi)\xii^{d}\,e^{-s\xii^{2\rho}})| \lesssim \xii^{d-|\gamma|}\,e^{-cs\xii^{2\rho}}, \]
for any~$\gamma$. After~$k$ steps of integration by parts, we derive
\begin{align*}
|\tilde H_{1/\rho,d}(s,x)|
    & \lesssim |x|^{-k}\int_{\xii\geq1} \xii^{d-k}\,e^{-cs\xii^{2\rho}}\,d\xi \\
    & \lesssim |x|^{-k}\,e^{-c_1s}\,\int_{\xii\geq1} \xii^{d-k}\,e^{-(c-c_1)\xii^{2\rho}}\,d\xi\lesssim |x|^{-k}\,e^{-c_1s}.
\end{align*}
for any~$c_1\in(0,c)$, where we used~$s\geq1$. Therefore,
\[ \|\tilde H_{1/\rho,d}(s,\cdot) \|_{L^p}\leq C\,e^{-cs},\qquad \forall p\in[1,\infty]. \]
\end{proof}
\begin{proposition}\label{prop:E2}
For any~$\rho\in(1/2,1)$, it holds
\[ G_{1/\rho,2}(1,\cdot)\in L^p, \]
for any~$p\in[1,\infty]$ such that
\begin{equation}\label{eq:boundLp2}
\frac{n}{2}\left(1-\frac1p\right)<1.
\end{equation}
\end{proposition}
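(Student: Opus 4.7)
The plan is to adapt the strategy of Propositions~\ref{prop:E1} and~\ref{prop:Ealpha}, but to work throughout on the high-frequency cut-off decomposition from Remark~\ref{rem:entire}. The reason is that Theorem~\ref{thm:ML} with $\beta=2$ forces $m=\ceil{2\rho-1}=1$, and therefore the asymptotic expansion of $E_{1/\rho,2}(-\xii^2)$ now contains a genuine Riesz-potential factor $\xii^{-2}$, which is not integrable near the origin in dimension $n\ge 3$. Concretely, I first split
\[
G_{1/\rho,2}(1,\cdot)=\mathfrak{F}^{-1}\bigl((1-\chi(\xi))\,E_{1/\rho,2}(-\xii^2)\bigr)+\mathfrak{F}^{-1}\bigl(\chi(\xi)\,E_{1/\rho,2}(-\xii^2)\bigr),
\]
the first summand being Schwartz by Remark~\ref{rem:entire}, hence in every $L^p$.

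For the high-frequency piece, I apply Theorem~\ref{thm:ML} with $\beta=2$, $m=1$, $z=\xii^{2\rho}$. This yields three contributions: an oscillating exponential $\chi(\xi)\,\xii^{-2\rho}e^{\xii^{2\rho}\cos(\pi\rho)}\cos(\xii^{2\rho}\sin(\pi\rho)-\pi\rho)$; a Bessel-type multiplier $\chi(\xi)\,\xii^{-2}/\Gamma(2-1/\rho)$; and a remainder $\chi(\xi)\,\Omega_1(\xii^{2\rho})$, which is a weighted $s$-integral of $\tilde H_{1/\rho,-2\rho}(s,\cdot)$ with weight $s^{(1+j)/\rho-2}/(s^{2/\rho}+2\cos(\pi/\rho)s^{1/\rho}+1)$ for $j=1,2$. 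The oscillating exponential, after expanding $\cos(A-\pi\rho)$ into sine and cosine, has exactly the structure of $\tilde K_{1/\rho,-2\rho}$: the cut-off removes the low-frequency singularity, the factor $e^{\xii^{2\rho}\cos(\pi\rho)}$ decays since $\cos(\pi\rho)<0$, and the integration-by-parts scheme of Lemma~\ref{lem:exp} puts this term in $L^1\cap L^\infty$.

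The Bessel-type multiplier is the delicate piece and it is precisely what restricts $p$ to~\eqref{eq:boundLp2}. Modulo a Schwartz perturbation, $\mathfrak{F}^{-1}(\chi(\xi)\xii^{-2})$ coincides with the Bessel potential of order $2$: this decays faster than any polynomial at infinity (by integration by parts on a smooth symbol supported away from the origin) and, near the origin, behaves like $|x|^{-(n-2)}$ for $n\ge3$, like $\log(1/|x|)$ for $n=2$, and is bounded for $n=1$, placing it in $L^p$ exactly when $\tfrac{n}{2}(1-1/p)<1$. Finally, for the $\Omega_1$ remainder I split the $s$-integral at $s=1$: for $s\ge1$, Lemma~\ref{lem:Htilde3} supplies exponential decay of $\|\tilde H_{1/\rho,-2\rho}(s,\cdot)\|_{L^p}$ for every $p\in[1,\infty]$, absorbing the polynomial weight; for $s\in(0,1]$, I combine Lemma~\ref{lem:Htilde} (uniform-in-$s$ bound when $\tfrac{n}{2}(1-1/p)<\rho$) with Lemma~\ref{lem:Htilde2} (bound $s^{1-\frac{n}{2\rho}(1-1/p)}$ when $\tfrac{n}{2}(1-1/p)>\rho$), checking that the resulting $s$-integrals converge at $s=0$ under~\eqref{eq:boundLp2}, with the borderline $\tfrac{n}{2}(1-1/p)=\rho$ recovered by interpolation. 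Summing the four contributions yields the claim; the main obstacle is the Riesz-potential term, which both forces the cut-off approach and pins down the sharp bound~\eqref{eq:boundLp2}.
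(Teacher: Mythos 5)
Your proposal is correct and follows essentially the same route as the paper: the high-frequency cut-off from Remark~\ref{rem:entire}, the expansion of Theorem~\ref{thm:ML} with $m=1$ producing the $\tilde K_{1/\rho,-2\rho}$ term, the localized Riesz potential $\mathfrak{F}^{-1}(\chi(\xi)\xii^{-2})$ treated as a Bessel potential (which pins down the restriction~\eqref{eq:boundLp2}), and the $\Omega_1$ integrals split at $s=1$ with Lemma~\ref{lem:Htilde3} for $s\geq1$ and Lemmas~\ref{lem:Htilde} and~\ref{lem:Htilde2} (plus interpolation at the borderline $\frac{n}{2}(1-\frac1p)=\rho$) for $s\leq1$. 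All convergence checks you indicate match those in the paper's proof.
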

\begin{proof}
According to Theorem~\ref{thm:ML},
\begin{align*}
\mathfrak{F}^{-1} \bigl(\chi(\xi)E_{1/\rho,2}(-\xii^2)\bigr)
    & = 2\rho\,\tilde K_{1/\rho,-2\rho} + \frac1{\Gamma(2-1/\rho)}\,\mathfrak{F}^{-1}(\chi(\xi)\,\xii^{-2}) \\
    & \qquad + \pi^{-1}\,\sin(\pi(2-2/\rho))\,\int_0^\infty \frac{s^{2/\rho-2}}{s^{2/\rho}+2\cos(\pi/\rho)\,s^{1/\rho}+1}\,\tilde H_{1/\rho,-2\rho}(s,\cdot)\,ds \\
    & \qquad + \pi^{-1}\,\sin(\pi(2-1/\rho))\,\int_0^\infty \frac{s^{3/\rho-2}}{s^{2/\rho}+2\cos(\pi/\rho)\,s^{1/\rho}+1}\,\tilde H_{1/\rho,-2\rho}(s,\cdot)\,ds,
\end{align*}
due to~$m=1$ (incidentally, we notice that in the special case~$\rho=3/2$, the first integral in the representation given by Theorem~\ref{thm:ML} disappears, due to $\sin(2-2/\rho)=-\sin\pi=0$).

In view of Remark~\ref{rem:entire}, we have to prove the statement for the localized Riesz potential~$\mathfrak{F}^{-1}(\chi(\xi)\,\xii^{-2})$ and for the integral terms.

\bigskip

We notice that~$\mathfrak{F}^{-1}(\chi(\xi)\,\xii^{-2})$ behaves as the Bessel potential~$(1-\Delta)^{-1}$, due to the presence of the cut-off function, in particular, it belongs to~$L^p$, for any~$p$ such that~\eqref{eq:boundLp2} holds. To manage the two integrals, we split them into two parts:
\begin{align*}
I_j^-(x)
    & =\int_0^1 \frac{s^{(1+j)/\rho-2}}{s^{2/\rho}+2\cos(\pi/\rho)\,s^{1/\rho}+1}\,\tilde H_{1/\rho,-2\rho}(s,x)\,ds \\
I_j^+(x)
    & =\int_1^\infty \frac{s^{(1+j)/\rho-2}}{s^{2/\rho}+2\cos(\pi/\rho)\,s^{1/\rho}+1}\,\tilde H_{1/\rho,-2\rho}(s,x)\,ds
\end{align*}
for~$j=1,2$. Then we use Lemmas~\ref{lem:Htilde} and~\ref{lem:Htilde2} in~$I_j^-$, and Lemma~\ref{lem:Htilde3} in~$I_j^+$.

Let~$p$ be as in~\eqref{eq:boundLp2} and assume that either~\eqref{eq:Lpd} or~\eqref{eq:Lpdback} holds, with~$d=-2\rho$. Indeed, if~$p$ verifies the equality in~\eqref{eq:Lpdback} and~\eqref{eq:Lpd}, the proof will follow by interpolation.

If~\eqref{eq:Lpdback} holds with~$d=-2\rho$, then we may apply Lemma~\ref{lem:Htilde}, obtaining:
\[ \|I_j^-\|_{L^p} \lesssim \int_0^1 s^{(1+j)/\rho-2}\,ds, \]
for~$j=1,2$. Clearly, both the integrals converge. Let~$p$ be as in~\eqref{eq:boundLp2}, and assume that~\eqref{eq:Lpd} holds with~$d=-2\rho$. Then we may apply Lemma~\ref{lem:Htilde2}, obtaining:
\[ \|I_j^-\|_{L^p} \lesssim \int_0^1 \frac{s^{(1+j)/\rho-1-\frac{n}{2\rho}\left(1-\frac1p\right)}\,}{s^{2/\rho}+2\cos(\pi/\rho)\,s^{1/\rho}+1}\,ds, \]
for~$j=1,2$. These integral converge, for any~$p\in(1,\infty]$ such that
\[ \frac{n}{2}\left(1-\frac1p\right)<1+j, \qquad j=1,2. \]
In particular, they converge, since~$p$ verifies~\eqref{eq:boundLp2}.

On the other hand, by Lemma~\ref{lem:Htilde3} with~$d=-2\rho$, it follows that the integrals
\[ \int_1^\infty \frac{s^{(1+j)/\rho}\,e^{-cs}}{s^{2/\rho}+2\cos(\pi/\rho)\,s^{1/\rho}+1}\,ds, \qquad j=1,2, \]
converge, due to the exponential term~$e^{-cs}$.
\end{proof}

%
%When~\eqref{eq:uniformEalphaplus} does not hold, but~\eqref{eq:uniformE1plus} does, we may rely on the following:
%%
%\[ \int_0^t (t-s)^\alpha E_{1+\alpha,1+\alpha}(-(t-s)^{1+\alpha}\xii^2)\,f(s)\,ds = \int_0^t E_{1+\alpha,1}(-(t-s)^{1+\alpha}\xii^2)\,J_{0+}^\alpha f(s)\,ds, \]
%%
%which is a consequence of fractional integration by parts~\cite{Samko}, being
%%
%\[ \int_0^t f(s)\, J_{0+}^{\alpha} g(s)\,ds = \int_0^t g(s)\,J_{t-}^{\alpha} f(s)\, ds, \]
%%
%where
%%
%\[ J_{t-}^{\alpha} h (s) = \frac1{\Gamma(1-\alpha)}\,\int_s^t (t-\tau)^{-(1-\alpha)}\,h(\tau)\,d\tau. \]
%%
%Indeed (see 2.1.53 in~\cite{KST}):
%%
%\[ J_{0+}^\alpha \bigl( t^{\beta-1}\,E_{\gamma,\beta}(\lambda(t-s)^\gamma)\bigr) = t^{\alpha+\beta-1}E_{\gamma,\alpha+\beta}(\lambda(t-s)^\gamma), \]
%%
%for any~$\lambda\in\R$, $\alpha,\beta>0$, $\gamma\geq0$.

\subsection{Proof of Theorem~\ref{thm:linear}}

We are now ready to prove Theorem~\ref{thm:linear}. As it is customary, we rely on the following estimate.
\begin{lemma}\label{lem:integral}
Let~$a<1$ and~$b\in\R$. Then:
\begin{equation}
\label{eq:intgood}
\int_0^t (t-s)^{-a}\,(1+s)^{-b}\,ds \lesssim \begin{cases}
(1+t)^{-a}, &\qquad \text{if~$a<1<b$,}\\
(1+t)^{-1}\,\log(1+t), &\qquad \text{if~$a<1=b$,}\\
(1+t)^{1-a-b}, &\qquad \text{if~$a,b<1$.}
\end{cases}
\end{equation}
\end{lemma}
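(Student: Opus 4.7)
The plan is to prove this via the classical device of splitting the integral at $s = t/2$ and comparing the integrand with its value at the dominant endpoint in each half. Since the three right-hand sides are bounded below by a positive constant on any compact time interval while the integral itself is finite for $t\in[0,1]$ (thanks to $a<1$), I would first dispose of the bounded regime $t\in[0,1]$ by direct estimation. Throughout the rest of the argument I assume $t\geq 1$, so that $t$ and $1+t$ are comparable up to a multiplicative constant.

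For the main regime, I would write
\[
\int_0^t (t-s)^{-a}(1+s)^{-b}\,ds = \int_0^{t/2} (t-s)^{-a}(1+s)^{-b}\,ds + \int_{t/2}^t (t-s)^{-a}(1+s)^{-b}\,ds.
\]
On the first piece, the inequality $t-s\geq t/2$ gives $(t-s)^{-a}\lesssim t^{-a}$, reducing the contribution to $t^{-a}\int_0^{t/2}(1+s)^{-b}\,ds$; this one-dimensional integral evaluates by elementary calculus to a quantity of order $1$, $\log(1+t)$, or $(1+t)^{1-b}$, depending on whether $b>1$, $b=1$, or $b<1$. On the second piece, $1+s\sim 1+t$ yields $(1+s)^{-b}\lesssim (1+t)^{-b}$, while the assumption $a<1$ makes $\int_{t/2}^t(t-s)^{-a}\,ds = (t/2)^{1-a}/(1-a)$ finite, producing a contribution of order $(1+t)^{-b}\,t^{1-a}$.

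It then remains to add the two contributions in each of the three cases and identify the dominant exponent: when $b>1$ the early-interval contribution $t^{-a}$ beats $(1+t)^{1-a-b}$, when $b=1$ the logarithmic factor from the early interval wins, and when $b<1$ the two contributions are of the same order $(1+t)^{1-a-b}$. I do not foresee a real obstacle --- everything is entirely elementary once the split has been made --- beyond the standard bookkeeping of keeping all implicit constants independent of $t$, and being mildly careful when $b$ is negative, in which case $(1+s)^{-b}$ is \emph{increasing} in $s$ but the comparison $(1+s)^{-b}\sim (1+t)^{-b}$ on $[t/2,t]$ still holds with absolute constants.
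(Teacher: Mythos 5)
Your proposal is correct and follows essentially the same route as the paper's proof: a direct bound for $t\in[0,1]$ using $a<1$, followed by the split at $s=t/2$ with $t-s\approx t$ on $[0,t/2]$ and $1+s\approx 1+t$ on $[t/2,t]$, and the same elementary evaluation of the two resulting one-dimensional integrals. One incidental remark: in the case $b=1$ both your argument and the paper's actually produce the bound $(1+t)^{-a}\log(1+t)$, so the exponent $-1$ in the displayed statement appears to be a typo for $-a$, which is also the form used when the lemma is applied in the proof of Theorem~\ref{thm:linear}.
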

For the ease of reading, we provide a proof of Lemma~\ref{lem:integral}, even if it is standard and well-known.
\begin{proof}
For~$t\in[0,1]$, it is sufficient to estimate
\[ \int_0^t (t-s)^{-a}\,(1+s)^{-b}\,ds \leq \int_0^t (t-s)^{-a}\,ds = \frac{t^{1-a}}{1-a} \leq \frac1{1-a},\]
whereas, for~$t\geq1$ we split the integration interval into~$[0,t/2]$ and~$[t/2,t]$, deriving
\[ \int_0^t (t-s)^{-a}\,(1+s)^{-b}\,ds \approx t^{-a} \int_0^{t/2}(1+s)^{-b}\,ds + t^{-b}\int_{t/2}^t (t-s)^{-a}\,ds, \]
thanks to~$t-s\in[t/2,t]$, for any~$s\in[0,t/2]$ and~$s\in[t/2,t]$ for any~$s\in[t/2,t]$. The proof follows from:
\[ \int_{t/2}^t (t-s)^{-a}\,ds = \frac{(t/2)^{1-a}}{1-a}, \]
and
\[ \int_0^{t/2}(1+s)^{-b}\,ds \lesssim \begin{cases}
1 & \text{if~$b>1$,}\\
\log (1+t) & \text{if~$b=1$,}\\
(1+t)^{1-b} & \text{if~$b<1$.}
\end{cases} \]
This concludes the proof of~\eqref{eq:intgood}.
\end{proof}
\begin{proof}[Proof of Theorem~\ref{thm:linear}]
By virtue of~\eqref{eq:scalingt} and Young inequality, we have that:
\begin{equation}
\label{eq:Gest}
\|G_{1/\rho,\beta}(t,\cdot)\ast h\|_{L^{q}} \lesssim t^{-\frac{n}{2\rho}\left(\frac1r-\frac1q\right)}\,\|h\|_{L^r},
\end{equation}
with~$\beta=1,1/\rho,2$, for any~$1\leq r\leq q\leq \infty$, such that
\begin{equation}\label{eq:boundLrq}
\frac{n}{2}\left(\frac1r-\frac1q\right)< \begin{cases}
1 & \text{if~$\beta=1, 2$,}\\
2 & \text{if~$\beta=1/\rho$.}
\end{cases}
\end{equation}
The proof of the estimate for the homogeneous part of the solution~$u^\hom$ immediately follows. For the inhomogeneous part of the solution related to the term~$f(t,x)$, it is sufficient to apply Lemma~\ref{lem:integral}, with
\[ a=\frac{n(1+\alpha)}2\left(\frac1{r_2}-\frac1q\right)-\alpha, \]
and~$b=\eta$.
\end{proof}

%%%%%%%%%%%%%%%%%%%%%%%%%%%%%%%%%%%%%%%%%%%%

\section{Proof of the global existence results}\label{sec:global}

We are now ready to prove Theorems~\ref{thm:10} and~\ref{thm:00}.

By~\eqref{eq:solutiondef}, a function $u\in X$, where~$X$ is a suitable space, is a solution to~\eqref{eq:CP} if, and only if, it satisfies the equality
\begin{equation}\label{eq:fixedpoint}
u(t,x) =  u^\lin(t,x) + Nu(t,x), \qquad \text{in~$X$,}
\end{equation}
where we set $u^\lin=u^\hom$, with $u^\hom$ as in~\eqref{eq:uhom} and~$f(s,x)=|u(s,x)|^p$, so that
\[ Nu(t,x)= \int_0^t (t-s)^\alpha\,G_{1+\alpha,1+\alpha}(t-s,x)\ast_{(x)}|u(s,x)|^p\,ds. \]
We will use the notation~$u^\lin$ instead of~$u^\hom$ in this Section, since this is now the linear part of the solution to the semilinear problem~\eqref{eq:CP}, whereas it was before the homogeneous part of the solution to the linear problem~\eqref{eq:CPlinear}.

The proof of our global existence results is based on the following scheme. We define~$X$ as the subspace of~$\mathcal C([0,\infty),L^1\cap L^p)$ for which a suitable norm~$\|\cdot\|_X$ is finite. This norm is related to the desired decay rates for the solution to~\eqref{eq:CP}. In particular, we show that~$u^\lin\in X$, and that
\begin{equation}\label{eq:estulin}
\|u^\lin\|_X \leq C\,\|u_0\|_{L^1\cap L^p},
\end{equation}
then we prove the estimates
\begin{align}
\label{eq:well}
\|Nu\|_{X}
    & \leq C\|u\|_{X}^p\,, \\
\label{eq:contraction}
\|Nu-Nv\|_{X}
    & \leq C\|u-v\|_{X} \bigl(\|u\|_{X}^{p-1}+\|v\|_{X}^{p-1}\bigr)\,.
\end{align}
By standard arguments, since $u^\lin\in X$ and~$p>1$, from~\eqref{eq:well} it follows that~$u^\lin+Nu$ maps balls of~$X$ into balls of~$X$, for small data in~$L^1\cap L^p$, and that estimates \eqref{eq:well}-\eqref{eq:contraction} lead to the existence of a unique solution~$u$ to~\eqref{eq:fixedpoint}. We simultaneously gain a local and a global existence result.

Our starting point is the use of the linear estimates in Theorem~\ref{thm:linear}. For both Theorems~\ref{thm:10} and~\ref{thm:00}, we prove~\eqref{eq:well}, but we omit the proof of~\eqref{eq:contraction}, since it is analogous to the proof of~\eqref{eq:well}. For the ease of reading, we first prove the simpler Theorem~\ref{thm:00}.
\begin{proof}[Proof of Theorem~\ref{thm:00}]
We define
\[ X= \{ u\in \mathcal C([0,\infty), L^1\cap L^p): \ \|u\|_X<\infty\}, \]
with norm:
\[ \|u\|_X = \sup_{t\geq0} (1+t)^{-\alpha}\,\bigl\{\|u(t,\cdot)\|_{L^1} + (1+t)^{\frac{n}2(1+\alpha)\left(1-\frac1p\right)} \|u(t,\cdot)\|_{L^p} \bigr\}. \]
For any~$q\in[1,p]$, we define
\[ \beta_q = \frac{n}2(1+\alpha)\left(1-\frac1{q}\right), \]
as in~\eqref{eq:betaqgood}. By interpolation, a function~$u\in X$ verifies
\[ \|u(t,\cdot)\|_{L^q}\leq (1+t)^{\alpha-\beta_q}\,\|u\|_X, \qquad \forall q\in[1,p]. \]
Thanks to Theorem~\ref{thm:linear}, the linear part~$u^\lin$ of the solution is in~$X$, and~\eqref{eq:estulin} holds. Indeed, taking~$r_0=q=1$, it holds
\[ \|u^\lin(t,\cdot)\|_{L^1} \lesssim \|u_0\|_{L^1}. \]
On the other hand, if~$q=p$ then we take~$r_0=p$ for~$t\in[0,1]$ and~$r_0=1$ for~$t\geq1$, so that
\[ \|u^\lin(t,\cdot)\|_{L^p} \lesssim \begin{cases}
\|u_0\|_{L^p} & t\in[0,1],\\
t^{-\frac{n}2(1+\alpha)\left(1-\frac1p\right)}\|u_0\|_{L^1} & t\geq1,
\end{cases} \]
and this leads to
\[ \|u^\lin(t,\cdot)\|_{L^p} \lesssim (1+t)^{-\frac{n}2(1+\alpha)\left(1-\frac1p\right)}\bigl(\|u_0\|_{L^1}+\|u_0\|_{L^p}\bigr), \qquad t\geq0. \]
Now we consider the nonlinear part of the solution. For any~$u\in X$, we get
\[ \||u(t,\cdot)|^p\|_{L^1} \lesssim \|u(t,\cdot)\|_{L^p}^p \lesssim (1+t)^{-\frac{n}2(1+\alpha)(p-1)+p\alpha} \|u\|_X^p. \]
In particular,
\[ \eta=\frac{n}2(1+\alpha)(p-1)-p\alpha>1, \]
if, and only if, $p>\tilde p(n,\alpha)$. We now apply Theorem~\ref{thm:linear} to the nonlinear part of the solution, i.e., we set~$f(t,x)=|u(t,x)|^p$ and~$K=c\|u\|_X^p$, for some~$c>0$. Let~$q=1,p$. By taking~$r_2=1$, we derive
\begin{align*}
\|Nu(t,\cdot)\|_{L^1}
    & \lesssim (1+t)^{\alpha}\|u\|_X^p,\\
\|Nu(t,\cdot)\|_{L^p}
    & \lesssim (1+t)^{\alpha-\frac{n}2(1+\alpha)\left(1-\frac1p\right)}\|u\|_X^p,
\end{align*}
so that~$Nu\in X$ and~$\|Nu\|_X \lesssim \|u\|_X^p$, i.e. we obtain~\eqref{eq:well}. This concludes the proof.
\end{proof}
We now prove Theorem~\ref{thm:10}.
\begin{proof}[Proof of Theorem~\ref{thm:10}]
We now define
\[ X= \{ u\in \mathcal C([0,\infty), L^1\cap L^p): \ \|u\|_X<\infty\}, \]
with norm:
\[ \|u\|_X = \sup_{t\geq0} (1+t)^{-1}\,\bigl\{\|u(t,\cdot)\|_{L^1} + (1+t)^{\frac{n}2(1+\alpha)\left(1-\frac1p\right)} \|u(t,\cdot)\|_{L^p} \bigr\}. \]
For any~$q\in[1,p]$, we define
\[ \beta_q = \frac{n}2(1+\alpha)\left(1-\frac1{q}\right), \]
as in~\eqref{eq:betaqgood}. By interpolation, a function~$u\in X$ verifies
\[ \|u(t,\cdot)\|_{L^q}\leq (1+t)^{1-\beta_q}\,\|u\|_X, \qquad \forall q\in[1,p]. \]
Thanks to Theorem~\ref{thm:linear}, the linear part~$u^\lin$ of the solution is in~$X$, and~\eqref{eq:estulin} holds. Indeed, taking~$r_0=q=1$, it holds
\[ \|u^\lin(t,\cdot)\|_{L^1} \lesssim \|u_0\|_{L^1}+t\,\|u_1\|_{L^1}, \]
in particular,
\[ \|u^\lin(t,\cdot)\|_{L^1} \lesssim (1+t)\bigl(\|u_0\|_{L^1}+\|u_1\|_{L^1}\bigr). \]
On the other hand, if~$q=p$ then we take~$r_0=p$ for~$t\in[0,1]$ and~$r_0=1$ for~$t\geq1$, so that
\[ \|u^\lin(t,\cdot)\|_{L^p} \lesssim \begin{cases}
\|u_0\|_{L^p}+t\|u_1\|_{L^p} & t\in[0,1],\\
t^{-\frac{n}2(1+\alpha)\left(1-\frac1p\right)}\bigl(\|u_0\|_{L^1}+t\|u_1\|_{L^1}\bigr) & t\geq1,
\end{cases} \]
and this leads to
\[ \|u^\lin(t,\cdot)\|_{L^p} \lesssim (1+t)^{1-\frac{n}2(1+\alpha)\left(1-\frac1p\right)}\bigl(\|u_0\|_{L^1}+\|u_1\|_{L^1}+\|u_0\|_{L^p}+\|u_1\|_{L^p}\bigr), \qquad t\geq0. \]
Now we consider the nonlinear part of the solution. For any~$u\in X$, we get
\[ \||u(t,\cdot)|^p\|_{L^1} \lesssim \|u(t,\cdot)\|_{L^p}^p \lesssim (1+t)^{-\frac{n}2(1+\alpha)(p-1)+p} \|u\|_X^p. \]
In particular,
\[ \eta=\frac{n}2(1+\alpha)(p-1)-p\geq\alpha, \]
if, and only if, $p\geq \bar p(n,\alpha)$. We now apply Theorem~\ref{thm:linear} to the nonlinear part of the solution, i.e., we set~$f(t,x)=|u(t,x)|^p$ and~$K=c\|u\|_X^p$, for some~$c>0$. By taking~$r_2=1$, we derive
\[
\|Nu(t,\cdot)\|_{L^1} \lesssim \begin{cases}
    (1+t)^{\alpha}\|u\|_X^p & \text{if~$\eta>1$}\\
    (1+t)^\alpha\log (e+t)\|u\|_X^p & \text{if~$\eta=1$}\\
    (1+t)^{\alpha+1-\eta}\|u\|_X^p & \text{if~$\eta<1$}
    \end{cases}
\]
In particular, the assumption~$\eta\geq\alpha$ guarantees that
\[ \|Nu(t,\cdot)\|_{L^1} \lesssim (1+t)\|u\|_X^p. \]
We proceed similarly to derive
\[ \|Nu(t,\cdot)\|_{L^p} \lesssim (1+t)^{1-\frac{n}2(1+\alpha)\left(1-\frac1p\right)}\|u\|_X^p, \]
so that~$Nu\in X$ and~$\|Nu\|_X \lesssim \|u\|_X^p$, i.e. we obtain~\eqref{eq:well}. This concludes the proof.
\end{proof}

%%%%%%%%%%%%%%%%%%%%%%%%%%%%%%%%%%%%%%%%%%%%%%%%%%

\section{Concluding remarks}\label{sec:concluding}

In order to keep the structure of the paper as simpler as possible, we postponed in this section some additional results which are not essential to the main purpose of the paper, but that can be of some interest for the reader.

\subsection{Extending the range for~$p$ beyond~$1+2/(n-2)$}\label{sec:Linf}

If~$n\geq3$ and~$p\geq 1+2/(n-2)$, then one may easily extend Theorems~\ref{thm:10} and~\ref{thm:00}. To avoid formal difficulties, it is convenient to state a result where data are assumed to be small in~$L^1\cap L^\infty$.
\begin{theorem}\label{thm:10inf}
Let~$n\geq2$ and $p\geq\bar p$, in~\eqref{eq:CP}, with~$\bar p$ as in~\eqref{eq:pcrit}. Then there exists~$\eps>0$ such that for any~$u_0,u_1\in L^1\cap L^\infty$, satisfying
\begin{align}
\label{eq:u0epsinf}
\|u_0\|_{L^1\cap L^\infty}
    & \doteq \|u_0\|_{L^1}+\|u_0\|_{L^\infty}\leq \eps,\\
\label{eq:u1epsinf}
\|u_1\|_{L^1\cap L^\infty}
    & \doteq \|u_1\|_{L^1}+\|u_1\|_{L^\infty}\leq \eps,
\end{align}
there exists a unique global solution
\begin{equation}\label{eq:solution0inf}
u\in\mathcal C([0,\infty),L^1\cap L^\infty)
\end{equation}
to~\eqref{eq:CP}. Moreover, for any~$\delta>0$, the solution satisfies
\begin{equation}
\label{eq:decayu1inf}
\|u(t,\cdot)\|_{L^q} \leq C\,(1+t)^{1-\beta_q}\,\bigl(\|u_0\|_{L^1\cap L^\infty}+\|u_1\|_{L^1\cap L^\infty}\bigr), \qquad \forall \, q\in[1,\infty], \ \forall t\geq0,
\end{equation}
where~$\beta_q$ is as in~\eqref{eq:betaq}, and~$C>0$ does not depend on the data.
\end{theorem}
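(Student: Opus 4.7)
The plan is to mimic the proof of Theorem~\ref{thm:10}, replacing $L^1\cap L^p$ by $L^1\cap L^\infty$ and accommodating the case $p\geq 1+2/(n-2)$---where the direct choice $r_2=1$, $q=p$ violates~\eqref{eq:r} in Theorem~\ref{thm:linear}---by introducing an auxiliary intermediate Lebesgue exponent. Fix $\delta>0$ small (to be chosen later) and let $q^{\star}\in\bigl(1,\,1+2/(n-2)\bigr)$ be defined by $\frac{n(1+\alpha)}{2}(1-1/q^{\star})=1+\alpha-\delta$, so that the sharp linear decay rate at $q^{\star}$ coincides with the saturated rate $1+\alpha-\delta$ imposed at infinity. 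Then I would define $X\subset \mathcal C([0,\infty),L^1\cap L^\infty)$ by
\[
\|u\|_X=\sup_{t\geq 0}(1+t)^{-1}\Bigl\{\|u(t,\cdot)\|_{L^1}+(1+t)^{1+\alpha-\delta}\bigl(\|u(t,\cdot)\|_{L^{q^{\star}}}+\|u(t,\cdot)\|_{L^{\infty}}\bigr)\Bigr\}.
\]
Riesz--Thorin interpolation between $L^1$ and $L^{q^{\star}}$ (sharp branch) and between $L^{q^{\star}}$ and $L^{\infty}$ (saturated branch) then shows that any $u\in X$ verifies $\|u(t,\cdot)\|_{L^q}\leq C(1+t)^{1-\beta_q}\|u\|_X$ for every $q\in[1,\infty]$, with $\beta_q$ as in~\eqref{eq:betaq}.

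The first step is to apply Corollary~\ref{cor:linear} (with $f\equiv0$) to prove that $u^{\lin}\in X$ and satisfies~\eqref{eq:estulin}: at $q\in\{1,q^{\star}\}$ one uses Theorem~\ref{thm:linear} with $r_j=1$---admissible because $\frac{n}{2}(1-1/q^{\star})=(1+\alpha-\delta)/(1+\alpha)<1$---while at $q=\infty$ one picks $r_j$ slightly greater than $n/2$, noting that $L^1\cap L^\infty\hookrightarrow L^{r_j}$ by interpolation. The nonlinear estimate~\eqref{eq:well} is then established by applying Theorem~\ref{thm:linear} to $f=|u|^p$ at $q\in\{1,q^{\star},\infty\}$. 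At $q=1$ with $r_2=1$, one has $\|u(t,\cdot)\|_{L^p}^p\lesssim(1+t)^{-\eta}\|u\|_X^p$ with $\eta=\frac{n(1+\alpha)(p-1)}{2}-p$ when $p<q^{\star}$ (sharp regime) and $\eta=p(\alpha-\delta)$ when $p\geq q^{\star}$ (saturated regime): the condition $\eta\geq\alpha$ amounts to $p\geq\bar p$ in the first case and to $p\geq\alpha/(\alpha-\delta)$ in the second, both being secured by the assumption $p\geq\bar p$ once $\delta$ is small enough that $\alpha/(\alpha-\delta)\leq \bar p$. This yields $\|Nu(t,\cdot)\|_{L^1}\lesssim(1+t)^\alpha\|u\|_X^p\leq(1+t)\|u\|_X^p$. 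At $q=q^{\star}$, still with $r_2=1$, one exploits $\alpha-\frac{n(1+\alpha)}{2}(1-1/q^{\star})=\delta-1\leq\delta-\alpha$, while at $q=\infty$ one picks $r_2$ just above $n/2$ so that $\alpha-\frac{n(1+\alpha)}{2r_2}$ lies close to $-1$, hence again $\leq\delta-\alpha$. Combining, $\|Nu(t,\cdot)\|_{L^q}\lesssim(1+t)^{1-\beta_q}\|u\|_X^p$ at $q=1,q^{\star},\infty$, and therefore at all $q\in[1,\infty]$ by interpolation.

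The Lipschitz estimate~\eqref{eq:contraction} follows from the same computations applied via $\abs{\abs{u}^p-\abs{v}^p}\lesssim(\abs u+\abs v)^{p-1}\abs{u-v}$, and a standard contraction argument on a small ball of $X$ then delivers the unique global solution together with~\eqref{eq:decayu1inf}. The main technical obstacle is the careful bookkeeping of admissible pairs $(r_j,q)$ in Theorem~\ref{thm:linear} once $p\geq 1+2/(n-2)$: the auxiliary weight at $L^{q^{\star}}$ must be introduced precisely to bridge the sharp low-$q$ regime with the saturated high-$q$ regime in the definition of $\beta_q$, and $\delta>0$ has to be chosen a priori small enough to guarantee $\alpha/(\alpha-\delta)\leq\bar p$, so that the critical condition $\eta\geq\alpha$ persists in the saturated branch of the nonlinear estimate.
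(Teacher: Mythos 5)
Your proof is correct and follows essentially the same route as the paper: the space $X$ you define (with the auxiliary exponent $q^{\star}$ determined by $\tfrac{n(1+\alpha)}{2}(1-1/q^{\star})=1+\alpha-\delta$) is exactly the space the paper uses in Section~\ref{sec:Linf} for Theorem~\ref{thm:10inf}, with the same weight $(1+t)^{\alpha-\delta}$ on the $L^{q^\star}$ and $L^\infty$ components, and the subsequent bookkeeping of admissible pairs $(r_j,q)$ in Theorem~\ref{thm:linear} matches the paper's indicated strategy. The details you supply (the dichotomy $\eta=\tfrac{n}{2}(1+\alpha)(p-1)-p$ versus $\eta=p(\alpha-\delta)$ and the smallness requirement $\alpha/(\alpha-\delta)\leq\bar p$ on $\delta$) correctly fill in what the paper leaves as ``minor modifications.''
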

\begin{theorem}\label{thm:00inf}
Let~$n\geq1$, $u_1=0$ and~$p>\tilde p$ in~\eqref{eq:CP}, with~$\tilde p$ as in~\eqref{eq:pcrit0}. Then there exists~$\eps>0$ such that for any~$u_0\in L^1\cap L^\infty$, satisfying~\eqref{eq:u0epsinf}, there exists a unique global solution as in~\eqref{eq:solution0inf}, to~\eqref{eq:CP}. Moreover, for any~$\delta>0$, the solution satisfies the following estimates:
\begin{equation}
\label{eq:decayu0inf}
\|u(t,\cdot)\|_{L^q} \leq C\,(1+t)^{\alpha-\beta_q}\,\|u_0\|_{L^1\cap L^\infty}, \qquad \forall \, q\in[1,\infty], \ \forall t\geq0,
\end{equation}
where~$\beta_q$ is as in~\eqref{eq:betaq}, and~$C>0$ does not depend on the data.
\end{theorem}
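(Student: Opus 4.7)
The plan is to mimic the contraction-in-$X$ scheme used for Theorem~\ref{thm:00}, enlarging $X$ to capture the $L^\infty$ norm. Fix a small $\delta>0$ and define the threshold exponent $\bar p=\bar p(\delta)$ by
\[ \frac{n(1+\alpha)}{2}\Bigl(1-\frac{1}{\bar p}\Bigr)=1+\alpha-\delta, \]
so that $\beta_q$ of~\eqref{eq:betaq} equals $\frac{n(1+\alpha)}{2}(1-1/q)$ on $[1,\bar p]$ and the constant $1+\alpha-\delta$ on $[\bar p,\infty]$; when $n\in\{1,2\}$ and $\delta$ is small the cap is inactive, one formally sets $\bar p=\infty$, and the argument only simplifies. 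Introduce
\[ \|u\|_X\doteq\sup_{t\geq 0}(1+t)^{-\alpha}\Bigl\{\|u(t,\cdot)\|_{L^1}+(1+t)^{1+\alpha-\delta}\bigl(\|u(t,\cdot)\|_{L^{\bar p}}+\|u(t,\cdot)\|_{L^\infty}\bigr)\Bigr\}. \]
Riesz--Thorin interpolation on $[1,\bar p]$ and on $[\bar p,\infty]$ then gives $\|u(t,\cdot)\|_{L^q}\leq C(1+t)^{\alpha-\beta_q}\|u\|_X$ uniformly in $q\in[1,\infty]$, recovering~\eqref{eq:decayu0inf} once we control $\|u\|_X$ by $\|u_0\|_{L^1\cap L^\infty}$.

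For the linear bound $\|u^\lin\|_X\lesssim\|u_0\|_{L^1\cap L^\infty}$ I would apply Theorem~\ref{thm:linear} (with $u_1\equiv 0$, $f\equiv 0$) three times. For $q\in\{1,\bar p\}$ take $r_0=1$: the constraint~\eqref{eq:r} reads $\frac{n}{2}(1-1/\bar p)=1-\delta/(1+\alpha)<1$, and the decay produced is exactly the one encoded in $\|\cdot\|_X$. For $q=\infty$ and $n\geq 2$, take instead the conjugate exponent $r_0=r^*\doteq n(1+\alpha)/(2(1+\alpha-\delta))$, for which $\frac{n}{2r^*}=1-\delta/(1+\alpha)<1$; the resulting decay is precisely $(1+t)^{-(1+\alpha-\delta)}$, and the embedding $L^1\cap L^\infty\hookrightarrow L^{r^*}$ converts~\eqref{eq:u0epsinf} into a control on $\|u_0\|_{L^{r^*}}$. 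When $n=1$, $r_0=1$ is already admissible for $q=\infty$ and no $r^*$ is needed.

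For the nonlinear estimate I would use
\[ \||u(t,\cdot)|^p\|_{L^{r_2}}=\|u(t,\cdot)\|_{L^{pr_2}}^p\leq C(1+t)^{-\eta}\|u\|_X^p,\qquad \eta=p\bigl(\beta_{pr_2}-\alpha\bigr), \]
applying Theorem~\ref{thm:linear} with $r_2=1$ and $q\in\{1,\bar p\}$, and with $r_2=r^*$ and $q=\infty$ (if $n\geq 2$). The condition $\eta>1$ splits into two sub-regimes. If $pr_2\leq\bar p$, then $\eta=\frac{n(1+\alpha)}{2}(p-1/r_2)-p\alpha$; for $r_2=1$ this is exactly the quantity appearing in the proof of Theorem~\ref{thm:00}, which equals $1$ at $p=\tilde p$, so $p>\tilde p$ forces $\eta>1$; for $r_2=r^*$ it reduces to $p(n+(n-2)\alpha)/2-(1+\alpha-\delta)$, whose value at $p=\tilde p$ equals $(n+(n-2)\alpha+2\delta)/2$, strictly larger than $1$ for $n\geq 3$ (since $(n-2)(1+\alpha)>0$) or for $n=2$ (since $\delta>0$), whence $p>\tilde p$ yields $\eta>1$. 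If instead $pr_2>\bar p$, the cap is active and $\eta=p(1-\delta)>1$ for $\delta$ small. Lemma~\ref{lem:integral} then turns these bounds into $\|Nu(t,\cdot)\|_{L^q}\lesssim(1+t)^{\alpha-\beta_q}\|u\|_X^p$ for $q\in\{1,\bar p,\infty\}$, giving $\|Nu\|_X\lesssim\|u\|_X^p$. The Lipschitz estimate~\eqref{eq:contraction} follows identically from $\bigl||u|^p-|v|^p\bigr|\lesssim(|u|^{p-1}+|v|^{p-1})|u-v|$ together with H\"older's inequality.

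A standard contraction in a small ball of $X$ then produces the unique global solution, and~\eqref{eq:decayu0inf} is read off from $\|u\|_X\lesssim\|u_0\|_{L^1\cap L^\infty}$ via the interpolation bound recorded above. The main technical obstacle is the simultaneous bookkeeping of the saturated and unsaturated regimes for $\beta_q$: both coexist within the norm on $X$ when $\bar p$ is finite, and the nonlinear exponent $\eta$ takes a different algebraic form in each regime. The most delicate borderline is $n=2$ with $p$ just above $\tilde p$, where it is precisely the cap $1+\alpha-\delta$ built into $\beta_q$ (rather than the unsaturated expression) that keeps $\eta$ strictly above $1$.
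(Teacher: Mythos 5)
Your proposal follows essentially the same route as the paper's (sketched) proof of Theorem~\ref{thm:00inf}: the space $X$ you define coincides with the paper's choice (your $\bar p(\delta)$ is the paper's $q_0$ from~\eqref{eq:q0}, and your weight $(1+t)^{-\alpha}(1+t)^{1+\alpha-\delta}$ equals the paper's $(1+t)^{1-\delta}$), and the application of Theorem~\ref{thm:linear} with $r_0,r_2\in\{1,r^*\}$ chosen so that~\eqref{eq:r} holds is exactly what the paper prescribes; your computations of $\eta$ in the saturated and unsaturated regimes check out. The only blemish is the parenthetical claim that the cap in $\beta_q$ is inactive for $n=2$: at $q=\infty$ one has $\frac{n}{2}(1+\alpha)(1-\frac1q)=1+\alpha>1+\alpha-\delta$, so the cap is active there — but this is harmless, since your subsequent treatment of $n\geq2$ goes through $r^*$ and the capped regime and your closing paragraph correctly identifies the cap as what saves the case $n=2$.
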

In order to prove Theorems~\ref{thm:10inf} and~\ref{thm:00inf}, it is sufficient to follow the proof of Theorems~\ref{thm:10} and~\ref{thm:00}, with minor modifications. One may fix
\[ X= \{ u\in \mathcal C([0,\infty), L^1\cap L^\infty): \ \|u\|_X<\infty\}, \]
with norm:
\[ \|u\|_X = \sup_{t\geq0} \bigl\{(1+t)^{-1}\,\|u(t,\cdot)\|_{L^1} + (1+t)^{\alpha-\delta} (\|u(t,\cdot)\|_{L^{q_0}}+\|u(t,\cdot)\|_{L^\infty}) \bigr\}, \]
in Theorem~\ref{thm:10inf}, and
\[ \|u\|_X = \sup_{t\geq0} \bigl\{(1+t)^{-\alpha}\,\|u(t,\cdot)\|_{L^1} + (1+t)^{1-\delta} (\|u(t,\cdot)\|_{L^{q_0}}+\|u(t,\cdot)\|_{L^\infty}) \bigr\}, \]
in Theorem~\ref{thm:00inf} (with~$n\geq2$), where~$q_0=q_0(\delta)\in(1,\infty)$ verifies
\begin{equation}\label{eq:q0}
\frac{n}2\left(1-\frac1{q_0}\right) = 1-\frac\delta{1+\alpha},
\end{equation}
for a sufficiently small~$\delta>0$. Then one obtains the desired result applying Theorem~\ref{thm:linear} to~$u^\lin$ and to the nonlinear part of the solution, by choosing, time by time, suitable~$r_0,r_1,r_2$, so that~\eqref{eq:r} is verified.

%%%%%%%%%%%%%%%%%%%%%%%%%%%%%%%%%%%%%%%%%%%%%%%%%%

\subsection{Estimates for the spatial derivatives of the solution}\label{sec:spatial}

With minor modifications in Section~\ref{sec:linear}, it is possible to prove that
\[ (-\Delta)^{\frac{\kappa}{2}}\,G_{1/\rho,\beta}(1,\cdot) \in L^p, \]
where~$\kappa>0$, provided that
\[ \frac{n}2\left(1-\frac1p\right) + \frac\kappa2 < \begin{cases}
1 & \text{if~$\beta=1,2$,}\\
2 & \text{if~$\beta=1/\rho$.}
\end{cases} \]
Indeed, it is sufficient to fix~$d=\kappa$ for $\beta=1$  in \eqref{eq:Kdef} {and} \eqref{eq:Hdef}, and $d=\kappa-2(1-\rho),\kappa-2\rho$, respectively, for~$\beta=1/\rho,2$ in~\eqref{eq:Ktildedef} and~\eqref{eq:Htildedef}, and consequently modify Propositions~\ref{prop:E1}, \ref{prop:Ealpha} and~\ref{prop:E2}. Indeed, one may easily prove that
\[ \|(-\Delta)^{{\frac{\kappa}{2}}}\,\partial_x^\gamma\,G_{1/\rho,\beta}(t,\cdot)\|_{L^{p}}\lesssim t^{-\frac{n}{2\rho}\left(1-\frac{1}{p} \right)-\frac{\kappa+|\gamma|}{2\rho}}, \]
where~$\kappa>0$ and~$\gamma\in\N^n$, provided that
\[ \frac{n}2\left(1-\frac1p\right) + \frac{\kappa + |\gamma|}2 < \begin{cases}
1 & \text{if~$\beta=1,2$,}\\
2 & \text{if~$\beta=1/\rho$.}
\end{cases} \]
These results allow to obtain estimates for the spatial derivatives of the solution to~\eqref{eq:CPlinear} and, therefore, to~\eqref{eq:CP}. Also, nonlinearities like~$|\nabla u|^p$, or~$\Delta (|u|^p)$ may be considered. As a mere example, we provide a result for~$\nabla u$. % and a result for~$\Delta u$.
\begin{theorem}\label{thm:nabla}
Let~$n\geq1$ and~$q\in[1,\infty]$. Assume that~$u_0,\nabla u_0\in L^{r_0}$, $u_1\in L^{r_1}$, and that~$f(t,\cdot)\in L^{r_2}$, with~$r_j\in[1,q]$, satisfying
\begin{equation}\label{eq:rnabla}
n\left(\frac1{r_j}-\frac1q\right) < 1,
\end{equation}
for~$j=0,1,2$. Assume that~\eqref{eq:fgood} holds for some~$K>0$ and~$\eta\in\R$. Then the solution to~\eqref{eq:CPlinear} verifies the following estimate:
\begin{align*}
\|\nabla u(t,\cdot)\|_{L^q}
    & \leq C\, t^{-\frac{n(1+\alpha)}2\left(\frac1{r_0}-\frac1q\right)}\, (1+t)^{-\frac{1+\alpha}2}\,\bigl(\|u_0\|_{L^{r_0}}+\|\nabla u_0\|_{L^{r_0}}\bigr)\\
    & \qquad + C\,t^{\frac{1-\alpha}2-\frac{n(1+\alpha)}2\left(\frac1{r_1}-\frac1q\right)}\, \|u_1\|_{L^{r_1}}\\
    & \qquad + \begin{cases}
    CK\,(1+t)^{-\frac{1-\alpha}2-\frac{n(1+\alpha)}2\left(\frac1{r_2}-\frac1q\right)} & \text{if~$\eta>1$,} \\
    CK\,(1+t)^{-\frac{1-\alpha}2-\frac{n(1+\alpha)}2\left(\frac1{r_2}-\frac1q\right)}\,\log(1+t) & \text{if~$\eta=1$,} \\
    CK\,(1+t)^{\frac{1+\alpha}2-\eta-\frac{n(1+\alpha)}2\left(\frac1{r_2}-\frac1q\right)} & \text{if~$\eta<1$,}
    \end{cases}
\end{align*}
for any~$t>0$, where~$C$ does not depend on the data.
\end{theorem}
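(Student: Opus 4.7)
The plan is to apply $\nabla$ to the representation formula~\eqref{eq:solutiondef} and estimate each of the three resulting convolutions separately, combining the gradient analogue of~\eqref{eq:Gest} (already announced in Section~\ref{sec:spatial}) with the same splitting/Lemma~\ref{lem:integral} argument that drives the proof of Theorem~\ref{thm:linear}.

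First, I would establish the kernel estimate
\[ \|\nabla G_{1+\alpha,\beta}(t,\cdot)\ast h\|_{L^{q}} \lesssim t^{-\frac{n(1+\alpha)}{2}\left(\frac1r-\frac1q\right)-\frac{1+\alpha}{2}}\,\|h\|_{L^r},\qquad 1\le r\le q\le\infty, \]
for $\beta\in\{1,2,1+\alpha\}$, under the constraint $\tfrac{n}{2}(\tfrac1r-\tfrac1q)+\tfrac12<1$ in the cases $\beta=1,2$ (and $<2$ for $\beta=1/\rho$); clearing the $\tfrac12$ this is exactly~\eqref{eq:rnabla}. This is obtained by repeating the proofs of Propositions~\ref{prop:E1}, \ref{prop:Ealpha} and~\ref{prop:E2} after incrementing by $1$ the exponent $d$ in the definitions~\eqref{eq:Kdef}, \eqref{eq:Hdef}, \eqref{eq:Ktildedef}, \eqref{eq:Htildedef}, since the extra Fourier factor $\xi_j$ produces a bound of the shape~\eqref{eq:expcompensate} with $d$ replaced by $d+1$; the integration-by-parts scheme of Lemmas~\ref{lem:H}, \ref{lem:exp}, \ref{lem:Htilde}, \ref{lem:Htilde2}, \ref{lem:Htilde3} then transports verbatim, and Young's inequality combined with the scaling~\eqref{eq:scalingt} yields the above $L^r$-$L^q$ estimate.

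Second, the $u_0$ contribution requires a small switching trick, because the raw bound $t^{-\frac{n(1+\alpha)}{2}(1/r_0-1/q)-(1+\alpha)/2}$ is not integrable near $t=0$. For $t\in(0,1]$ I would move the gradient onto the datum, writing $\nabla\bigl(G_{1+\alpha,1}(t,\cdot)\ast u_0\bigr)=G_{1+\alpha,1}(t,\cdot)\ast \nabla u_0$ and invoking~\eqref{eq:Gest} to get $t^{-\frac{n(1+\alpha)}{2}(1/r_0-1/q)}\|\nabla u_0\|_{L^{r_0}}$; on this range the factor $(1+t)^{-(1+\alpha)/2}$ is bounded away from $0$ and $\infty$. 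For $t\geq1$ I leave the gradient on the kernel and apply the new estimate of the previous paragraph, yielding $t^{-\frac{n(1+\alpha)}{2}(1/r_0-1/q)-(1+\alpha)/2}\|u_0\|_{L^{r_0}}$. Gluing the two ranges produces the first term in the statement. The $u_1$ contribution is treated by a direct application of the new gradient estimate: the prefactor $t$ in~\eqref{eq:uhom} combines with the extra decay $t^{-(1+\alpha)/2}$ to give the exponent $(1-\alpha)/2$ displayed in the theorem. For the forcing term I insert~\eqref{eq:fgood} pointwise in $s$ and invoke Lemma~\ref{lem:integral} with
\[ a=\frac{1-\alpha}{2}+\frac{n(1+\alpha)}{2}\left(\frac1{r_2}-\frac1q\right),\qquad b=\eta, \]
so that $a<1$ is equivalent to~\eqref{eq:rnabla} and the three subcases $\eta>1$, $\eta=1$, $\eta<1$ of the lemma reproduce, respectively, the three alternatives in the displayed estimate.

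The only genuinely new ingredient, compared with Theorem~\ref{thm:linear}, is the kernel estimate of the second paragraph; once the shift $d\leadsto d+1$ is absorbed into Lemmas~\ref{lem:H}--\ref{lem:Htilde3} (with some care in the new threshold case for $\beta=1/\rho$, where an extra check that~\eqref{eq:preboundLpalpha} still has room is needed), all remaining steps are routine adaptations of the argument for~\eqref{eq:Gest}, so no further technicalities are expected.
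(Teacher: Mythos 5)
Your proposal is correct and follows essentially the same route as the paper: the paper's proof likewise replaces~\eqref{eq:Gest} by the gradient kernel estimate with the extra factor $t^{-\frac{1+\alpha}{2}}$ (obtained by the shift of $d$ indicated in Section~\ref{sec:spatial}), moves the gradient onto $u_0$ for $t\leq1$ to avoid the singularity at $t=0$, and concludes via Lemma~\ref{lem:integral} exactly as you describe. The only difference is that your write-up supplies more of the intermediate detail than the paper's two-line argument.
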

\begin{remark}
The assumption~$\nabla u_0\in L^{r_0}$ is taken to give a non-singular estimate at~$t=0$, when~$q=r_0$, namely, to guarantee the well-posedness of the homogeneous problem in~$L^q$. Indeed, for~$f\equiv0$, one has
\[ \|\nabla u(t,\cdot)\|_{L^q} \leq C\, (1+t)^{-\frac{1+\alpha}2}\,\bigl(\|u_0\|_{L^q}+\|\nabla u_0\|_{L^q}\bigr) + C\,t^{\frac{1-\alpha}2}\, \|u_1\|_{L^q}.\]
\end{remark}
\begin{proof}
The proof is analogous to the proof of Theorem~\ref{thm:linear}, but~\eqref{eq:Gest} is replaced by
\[ \|\nabla G_{1/\rho,1}(t,\cdot)\ast f\|_{L^{q}} \lesssim t^{-\frac{n}{2\rho}\left(\frac1r-\frac1q\right)-\frac1{2\rho}}\,\|f\|_{L^r}. \]
However, in the estimate with respect to~$u_0$, for~$t\leq1$, the gradient is applied to~$u_0$, i.e., \eqref{eq:Gest} is modified into
\[ \|\nabla G_{1/\rho,1}(t,\cdot)\ast f\|_{L^{q}} \lesssim t^{-\frac{n}{2\rho}\left(\frac1{r_0}-\frac1q\right)}\,\|\nabla f\|_{L^{r_0}}. \]
\end{proof}

\subsection{Estimates for the time derivatives of the solution}\label{sec:time}

By using the following formula for derivatives of the Mittag-Leffler functions (see (1.10.7) in~\cite{KST}):
\[ \partial_z^n \bigl( z^{\beta-1}E_{\alpha+1,\beta}(\lambda z^{\alpha+1})\bigr) = z^{\beta-n-1}\,E_{\alpha+1,\beta-n}(\lambda z^{\alpha+1}). \]
one may derive estimates for the time-derivatives of the solution to~\eqref{eq:CPlinear}. In particular, since~$u$ in~\eqref{eq:solutiondef} solves~\eqref{eq:CPlinear}, then
\begin{equation}\label{eq:solutionderdef}
u_t(t,\cdot)=u_t^\hom(t,\cdot) + \int_0^t (t-s)^{\alpha-1}\,G_{1+\alpha,\alpha}(t-s,\cdot) \ast_{(x)} f(s,\cdot)\,ds,
\end{equation}
where
\begin{equation}\label{eq:uthom}
u_t^\hom(t,\cdot)= t^{-1}\,G_{1+\alpha,0}(t,\cdot) \ast_{(x)} u_0 + G_{1+\alpha,1}(t,\cdot) \ast_{(x)} u_1.
\end{equation}
The mapping properties of~$G_{1+\alpha,1}(1,\cdot)$ are studied in Proposition~\ref{prop:E1}, whereas the mapping properties of~$G_{1+\alpha,0}(1,\cdot)$ and~$G_{1+\alpha,\alpha}(1,\cdot)$ may be easily studied using once again the representation in Theorem~\ref{thm:ML}. In particular,
\[ \|u_t^\hom(t,\cdot)\|_{L^q} \lesssim \|(-\Delta)^{\frac1{1+\alpha}}u_0\|_{L^q} + \|u_1\|_{L^q}. \]
Once linear estimates are obtained, they may be included in the statements of the nonlinear results, and nonlinearities like~$|u_t|^p$ may also be studied.

\subsection{Stronger smallness assumption on~$u_1$}

The global existence exponent in Theorem~\ref{thm:10} may be improved, if stronger smallness assumption are taken for the second data~$u_1$. In particular, if~$(-\Delta)^{-\frac\kappa2}u_1 \in L^1$ for some~$\kappa\in(0,2)$, then
\[ \|G_{1+\alpha,2}(t,\cdot)\ast_{(x)}u_1\|_{L^q} \lesssim t^{1-\frac{n}2(1+\alpha)\left(1-\frac1q\right)-\frac\kappa2\,(1+\alpha)}\|(-\Delta)^{-\frac\kappa2}u_1\|_{L^1}. \]
for any~$q\geq1$, such that
\[ \frac{n}2\left(1-\frac1q\right)+\frac\kappa2 <1, \]
thanks to the mapping properties of~$(-\Delta)^{\frac\kappa2} G_{1+\alpha,2}$ (see Section~\ref{sec:spatial}). We recall that
\[ \|(-\Delta)^{-\frac\kappa2}u_1\|_{L^1} \leq \|(-\Delta)^{-\frac\kappa2}u_1\|_{H^1} \lesssim \|u_1\|_{H^{r_1}}, \]
where~$r_1\in(0,1)$ is defined as
\[ n\left(\frac1{r_1}-1\right)=\kappa, \]
and~$H^{r}$ is the real Hardy space of exponent~$r\in(0,1)$.

\bigskip

In particular, taking~$\kappa=2(1-\alpha)/(1+\alpha)$, the decay rate for the solution to~\eqref{eq:CPlinear}, with respect to the second data, becomes the same one obtained when~$u_1\equiv0$. In turn, this leads to improve the critical exponent for~\eqref{eq:CP} to~$\tilde p(n,\alpha)$, by replacing assumption~\eqref{eq:u1eps} in Theorem~\ref{thm:10} with
\[ \|u_1\|_{H^{r_1}}+\|u_1\|_{L^\infty}\leq \eps. \]
This result does not contradict the nonexistence result in~\cite{DA+} since any function in a real Hardy space~$H^r$, with~$r\in(0,1]$, verifies the moment condition, that is, its integral is zero, so that no sign assumption on~$u_1$ is possible. We address the reader interested in decay estimates in real Hardy spaces for damped evolution equations to~\cite{DAEP16}.

\subsection{Estimates for higher order equations}

If we consider the equation in~\eqref{eq:CPlinear} with
\[ \partial_t^{1+\alpha} u +(-\Delta)^m u = f(t,x), \]
where~$m\in\N\setminus\{0,1\}$ (or even~$m\in\R$, $m>0$), then it is sufficient to modify the definition of~~$G_{1/\rho,\beta}$ in~\eqref{eq:Gdef}, setting
\[ G_{1/\rho,\beta}(t,x)= \mathfrak{F}^{-1} \bigl(E_{1/\rho,\beta}(-t^{1/\rho}\xii^{2m})\bigr). \]
With minor modifications in Section~\ref{sec:linear}, it is possible to prove that now
\[ G_{1/\rho,\beta}(1,\cdot) \in L^p, \]
provided that
\[ \frac{n}{2m}\left(1-\frac1p\right) < \begin{cases}
1 & \text{if~$\beta=1,2$,}\\
2 & \text{if~$\beta=1/\rho$.}
\end{cases} \]
The statements of Theorems~\ref{thm:linear}, \ref{thm:00} and~\ref{thm:10} are consequently modified. In particular, the global existence of small data solutions holds for~$p>\tilde p(n/m,\alpha)$ if~$u_1\equiv0$, and~$p>\bar p(n/m,\alpha)$ otherwise.

\subsection{The Cauchy problem with Riemann-Liouville fractional derivative}

By using the mapping properties for~$G_{1/\rho,1/\rho}$ derived in Proposition~\ref{prop:Ealpha}, it is possible to study the easier problem of the Cauchy problem for the fractional diffusive equation
\begin{equation}\label{eq:CPlinearRL}
\begin{cases}
D^{1+\alpha} u -\Delta u = f(t,x),\qquad t>0,\ x\in\R^n,\\
J^{1-\alpha} u (0,x)=v_{1-\alpha}(x),\\
D^\alpha u(0,x)=u_\alpha(x),
\end{cases}
\end{equation}
where~$D^\beta$ is the Riemann-Liouville fractional derivative defined in~\eqref{eq:RLder}. Indeed, the solution to~\eqref{eq:CPlinearRL}
%
%
%\begin{equation}\label{eq:CPFRL}
%\begin{cases}
%D_t^{1+\alpha} \hat u + \xii^{2} \hat u = \hat f(t,\xi),\qquad t\geq0,\ x\in\R^n,\\
%D_t^\alpha \hat u(0,\xi)=\hat u_\alpha(\xi),
%\end{cases}
%\end{equation}
%
is now given by (see Theorem 4.1 and Example 4.2 in~\cite{KST}):
\begin{align*}%\label{eq:solutiondefRL}
u(t,\cdot)
    & =t^{-(1-\alpha)}\,G_{1+\alpha,\alpha}(t,\cdot) \ast_{(x)} v_{1-\alpha} + t^\alpha\,G_{1+\alpha,1+\alpha}(t,\cdot) \ast_{(x)} u_\alpha \\
    & \qquad + \int_0^t (t-s)^{\alpha}\,G_{1+\alpha,1+\alpha}(t-s,\cdot) \ast_{(x)} f(s,\cdot)\,ds,
\end{align*}
where~$G_{1+\alpha,\beta}$ is as in~\eqref{eq:Gdef}. A result similar to Theorem~\ref{thm:linear} may then be easily obtained when~$v_{1-\alpha}\equiv0$, and applied to study the problem with power nonlinearity~$|u|^p$. For this problem, the critical exponent is the one given by scaling arguments (see Section~\ref{sec:scaling}), i.e. $\tilde p(n,\alpha)$. Indeed,
\[ D^\alpha \bigl( u(\lambda^{\frac2{1+\alpha}}t,\lambda x)\bigr)\bigl|_{t=0}\bigr. = \lambda^{\frac{2\alpha}{1+\alpha}}\,u_\alpha(\lambda x), \]
so that the solution to~$q_\sca=1$ is given by~$p=\tilde p(n,\alpha)$. The fact that the critical exponent is the expected one from scaling arguments, is consistent with the fact that the solution to the nonlinear problem suffers no loss of decay with respect to the solution to the linear one.

\subsection{An extension of Theorem~\ref{thm:linear}}

As~$t\to\infty$, Lemma~\ref{lem:integral} may be extended to cover the case of different pairs of coefficients~$a,b$, in the integration ranges~$[0,t/2]$ and~$[t/2,t]$, in the following way.
\begin{lemma}\label{lem:integral1}
Let~$a_1<1$ and~$a_0, b_0, b_1\in\R$, and assume that~$k(t,s)$ is a nonnegative function, such that
\[ k(t,s)\leq \min\{ (t-s)^{-a_0}\,(1+s)^{-b_0}, \ (t-s)^{-a_1}\,(1+s)^{-b_1}\}. \]
Then, for any~$t\geq0$, it holds:
\begin{equation}
\label{eq:intrefined}
\int_0^t k(t,s)\,ds\lesssim (1+t)^{1-a_1-b_1} + \begin{cases}
(1+t)^{-a_0}, &\qquad \text{if~$b_0>1$,}\\
(1+t)^{-a_0}\,\log(1+t), &\qquad \text{if~$b_0=1$,}\\
(1+t)^{1-a_0-b_0}, &\qquad \text{if~$b_0<1$.}
\end{cases}
\end{equation}
\end{lemma}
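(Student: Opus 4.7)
The proof will mirror the strategy used in Lemma~\ref{lem:integral}, but with the refinement that the two available pointwise bounds on $k(t,s)$ are used on different pieces of the integration interval. The key observation is that the pair $(a_1,b_1)$ (for which $a_1<1$ is required) is tailored to be integrable near the endpoint $s=t$, whereas the pair $(a_0,b_0)$ is tailored for the region where $s$ is small and the factor $(1+s)^{-b_0}$ governs the growth. So I would use the $(a_1,b_1)$ bound on $[t/2,t]$ and the $(a_0,b_0)$ bound on $[0,t/2]$.

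First I would dispose of the case $t\in[0,1]$. Here I use the bound $k(t,s)\leq (t-s)^{-a_1}(1+s)^{-b_1}$; since $(1+s)^{-b_1}$ is bounded on $[0,1]$, a direct integration using $a_1<1$ yields $\int_0^t k(t,s)\,ds \lesssim t^{1-a_1}\leq 1/(1-a_1)$, and the right-hand side of~\eqref{eq:intrefined} is also bounded on $[0,1]$, so the estimate holds trivially.

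For $t\geq1$, I split the integral at $t/2$. On $[t/2,t]$ we have $1+s\in[1+t/2,1+t]$, so regardless of the sign of $b_1$ we obtain $(1+s)^{-b_1}\lesssim (1+t)^{-b_1}$. Using the $(a_1,b_1)$ bound then yields
\[ \int_{t/2}^t k(t,s)\,ds \lesssim (1+t)^{-b_1} \int_{t/2}^t (t-s)^{-a_1}\,ds = (1+t)^{-b_1}\,\frac{(t/2)^{1-a_1}}{1-a_1}\lesssim (1+t)^{1-a_1-b_1}, \]
which is precisely the first term on the right-hand side of~\eqref{eq:intrefined}. On $[0,t/2]$ we have $t-s\in[t/2,t]$, so $(t-s)^{-a_0}\lesssim (1+t)^{-a_0}$ (the constant absorbs the factor $2^{|a_0|}$ irrespective of the sign of $a_0$). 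Using the $(a_0,b_0)$ bound, we find
\[ \int_0^{t/2} k(t,s)\,ds \lesssim (1+t)^{-a_0}\int_0^{t/2}(1+s)^{-b_0}\,ds, \]
and the three cases in~\eqref{eq:intrefined} come out from the elementary estimate of this last integral depending on whether $b_0>1$, $b_0=1$, or $b_0<1$, exactly as in the proof of Lemma~\ref{lem:integral}.

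I do not expect any real obstacle: the whole content of the proof is to recognize which kernel bound is efficient on which half of $[0,t]$. The only mild subtlety is checking that $(t-s)^{-a_0}\lesssim (1+t)^{-a_0}$ on $[0,t/2]$ holds for both signs of $a_0$ (because the function is then respectively maximized at $s=t/2$ or $s=0$, giving either $(t/2)^{-a_0}$ or $t^{-a_0}$), and that $(1+s)^{-b_1}\lesssim (1+t)^{-b_1}$ on $[t/2,t]$ in both signs of $b_1$. Once these two observations are in place, combining the two partial estimates gives~\eqref{eq:intrefined} directly.
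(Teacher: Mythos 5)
Your proof is correct and follows essentially the same route as the paper's: split at $t/2$ for $t\geq1$, apply the $(a_0,b_0)$ bound on $[0,t/2]$ where $t-s\approx t$ and the $(a_1,b_1)$ bound on $[t/2,t]$ where $1+s\approx 1+t$, and treat $t\leq1$ directly via the integrable $(a_1,b_1)$ kernel. No gaps.
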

\begin{proof}
The proof is analogous to the proof of Lemma~\ref{lem:integral}. For~$t\leq1$, we estimate
\[ \int_0^t k(t,s)\,ds \lesssim \int_0^t (t-s)^{-a_1}\,ds \leq C, \]
whereas, for~$t\geq1$, we estimate:
\begin{gather*}
\int_0^{t/2} (t-s)^{-a_0}\,(1+s)^{-b_0}\,ds \approx t^{-a_0} \int_0^{t/2} (1+s)^{-b_0}\,ds\approx \begin{cases}
t^{-a_0}, &\qquad \text{if~$b_0>1$,}\\
t^{-a_0}\,\log(1+t), &\qquad \text{if~$b_0=1$,}\\
t^{1-a_0-b_0}, &\qquad \text{if~$b_0<1$,}
\end{cases} \\
\int_{t/2}^t (t-s)^{-a_1}\,(1+s)^{-b_1}\,ds \approx t^{-b_1}\,\int_{t/2}^t (t-s)^{-a_1}\,ds \approx t^{1-a_1-b_1}.
\end{gather*}
This concludes the proof.
\end{proof}
Theorem~\ref{thm:linear} may then be consequently extended.
\begin{theorem}\label{thm:linear1}
Let~$n\geq1$ and~$q\in[1,\infty]$. Assume that~$u_0\in L^{r_0}$, $u_1\in L^{r_1}$, and that~$f(t,\cdot)\in L^{r_3}\cap L^{r_2}$, with~$r_j\in[1,q]$, satisfying~\eqref{eq:r} for~$j=0,1,2$, and
\begin{equation}\label{eq:r3}
1\leq \frac{n}2\left(\frac1{r_3}-\frac1q\right) < 2,
\end{equation}
for~$j=0,1,2$. Assume that
\begin{equation}\label{eq:fextened}
\|f(t,\cdot)\|_{L^{r_j}}\leq K\,(1+t)^{-\eta_j}, \qquad \forall t\geq0,
\end{equation}
for~$j=2,3$, for some~$K>0$ and~$\eta_2,\eta_3>1$. Moreover, assume that
\[ \frac{n}2(1+\alpha)\left(\frac1{r_3}-\frac1{r_2}\right) \leq \eta_2-1. \]
Then the solution to~\eqref{eq:CPlinear} verifies the following estimates:
\[ \|u(t,\cdot)\|_{L^q}\leq C\, t^{-\frac{n(1+\alpha)}2\left(\frac1{r_0}-\frac1q\right)}\, \|u_0\|_{L^{r_0}}+ C\,t^{1-\frac{n(1+\alpha)}2\left(\frac1{r_1}-\frac1q\right)}\, \|u_1\|_{L^{r_1}}+ CK\,(1+t)^{\alpha-\frac{n(1+\alpha)}2\left(\frac1{r_3}-\frac1q\right)}, \]
for any~$t>0$, where~$C$ does not depend on the data.
\end{theorem}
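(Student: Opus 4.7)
The plan is to follow the skeleton of the proof of Theorem~\ref{thm:linear}, but to exploit the additional integrability of $f$ in $L^{r_3}$ in order to handle the regime where the $L^{r_3}$--$L^q$ mapping property of $G_{1+\alpha,1+\alpha}$ is still available (thanks to Proposition~\ref{prop:Ealpha}, which allows $\tfrac{n}{2}(\tfrac1{r_3}-\tfrac1q)<2$), while reverting to $L^{r_2}$ near the singular endpoint $s=t$.

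First, I would write the solution as in~\eqref{eq:solutiondef}. The estimates for the homogeneous part $u^\hom$ (the $u_0$ and $u_1$ contributions) are identical to those already carried out in the proof of Theorem~\ref{thm:linear}: they use Young inequality together with the scaling property~\eqref{eq:scalingt} and Propositions~\ref{prop:E1} and~\ref{prop:E2}, and condition~\eqref{eq:r} for $j=0,1$ is precisely what is required there. This accounts for the first two terms on the right-hand side of the claimed estimate.

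For the inhomogeneous part, the key observation is that by Young inequality combined with the scaling property~\eqref{eq:scalingt} applied to $G_{1+\alpha,1+\alpha}=G_{1/\rho,1/\rho}$, Proposition~\ref{prop:Ealpha} gives \emph{two} available bounds on the convolution, namely
\[
\bigl\|G_{1+\alpha,1+\alpha}(t-s,\cdot)\ast_{(x)}f(s,\cdot)\bigr\|_{L^q}
\lesssim (t-s)^{-\frac{n(1+\alpha)}2\left(\frac1{r_j}-\frac1q\right)}\,\|f(s,\cdot)\|_{L^{r_j}},
\]
for $j=2$ (valid by \eqref{eq:r}) and for $j=3$ (valid by \eqref{eq:r3}, since $\tfrac{n}{2}(\tfrac1{r_3}-\tfrac1q)<2$). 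Combining with the prefactor $(t-s)^\alpha$ and with the decay hypothesis~\eqref{eq:fextened}, the kernel to integrate in $s$ is bounded by
\[
k(t,s)\leq \min\bigl\{(t-s)^{-a_0}(1+s)^{-b_0},\ (t-s)^{-a_1}(1+s)^{-b_1}\bigr\},
\]
with
\[
a_j=\tfrac{n(1+\alpha)}{2}\bigl(\tfrac1{r_{j+2}}-\tfrac1q\bigr)-\alpha,\qquad b_j=\eta_{j+2},\qquad j=0,1.
\]

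At this point I invoke Lemma~\ref{lem:integral1}. The condition $a_1<1$ is equivalent to $\tfrac{n}{2}(\tfrac1{r_2}-\tfrac1q)<1$, which is exactly~\eqref{eq:r}. Since $b_0=\eta_3>1$, the lemma yields
\[
\int_0^t k(t,s)\,ds\lesssim (1+t)^{1-a_1-b_1}+(1+t)^{-a_0}
=(1+t)^{1-\frac{n(1+\alpha)}2(\frac1{r_2}-\frac1q)+\alpha-\eta_2}+(1+t)^{\alpha-\frac{n(1+\alpha)}2(\frac1{r_3}-\frac1q)}.
\]
The claimed estimate follows once I check that the first summand is dominated by the second, which is the step requiring the hypothesis $\tfrac{n}{2}(1+\alpha)(\tfrac1{r_3}-\tfrac1{r_2})\leq \eta_2-1$: indeed, subtracting exponents gives precisely this inequality.

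The main (and only non-routine) obstacle is the bookkeeping in this last comparison, i.e.\ matching the exponents in the two terms produced by Lemma~\ref{lem:integral1} and verifying that the balance condition stated in the theorem is sharp for the stronger $r_3$-decay to survive. Everything else reduces to the same computations already done for Theorem~\ref{thm:linear}, applied twice with parameters $r_2$ and $r_3$.
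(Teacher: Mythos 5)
Your proposal is correct and follows essentially the same route as the paper: the paper's proof likewise reduces everything to Lemma~\ref{lem:integral1} with exactly your choices $a_0=\frac{n(1+\alpha)}2(\frac1{r_3}-\frac1q)-\alpha$, $a_1=\frac{n(1+\alpha)}2(\frac1{r_2}-\frac1q)-\alpha$, $b_0=\eta_3$, $b_1=\eta_2$, and the same final check $a_1+b_1-1\geq a_0$. Your write-up is in fact more explicit than the paper's (which states only the parameter choices), correctly identifying Proposition~\ref{prop:Ealpha} as the source of the relaxed $L^{r_3}$--$L^q$ bound.
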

\begin{proof}
It is sufficient to follow the proof of Theorem~\ref{thm:linear}, using Lemma~\ref{lem:integral1} with
\begin{align*}
a_0
    & = \frac{n(1+\alpha)}2\left(\frac1{r_3}-\frac1q\right)-\alpha, \\
a_1
    & = \frac{n(1+\alpha)}2\left(\frac1{r_2}-\frac1q\right)-\alpha,
\end{align*}
and~$b_0=\eta_3$, $b_1=\eta_2$. Indeed, $a_1+b_1-1 \geq a_0$.
\end{proof}
\begin{remark}
Thanks to Theorem~\ref{thm:linear1}, it is possible to improve the decay rate of the solution in Theorem~\ref{thm:00} in space dimension~$n\geq2$, for~$q\geq 1+2/(n-2)$, namely, for any small~$\delta>0$, one may prove that
\[ \|u(t,\cdot)\|_{L^q} \lesssim (1+t)^{-\min\left\{\frac{n}2(1+\alpha)\left(1-\frac1q\right)-\alpha,1+\alpha-\delta\right\}}\,\|u_0\|_{L^1\cap L^\infty}. \]
We avoid the details, for the sake of brevity.
\end{remark}

\section*{Acknowledments}

The first author is member of the Gruppo Nazionale per l'Analisi Matematica, la Probabilit\`a e le loro Applicazioni (GNAMPA) of the Istituto Nazionale di Alta Matematica (INdAM). The first and the third authors have been supported by AUCANI/USP - Novas Parcerias Internacionais Programa USP-Santander de Mobilidade Docente and by INdAM - GNAMPA Project 2016 \emph{Profili asintotici per equazioni di tipo dispersivo.} The second and third authors are partially supported by S\~ao Paulo Research Foundation (Fapesp) grants 2015/16038-2 and 2013/17636-5, respectively.


\begin{thebibliography}{9}
%
%\bibitem{AA} K. Assaleh; W.M. Ahmad, \emph{Modeling of speech signals using fractional calculus}, 9th International Symposium on Signal Processing and Its Applications, 2007. ISSPA 2007. 12-15 Feb. 2007 Page(s):1 - 4
%
\bibitem{AMS} O.  Agrawal, J. Machado and  J. Sabatier, \emph{ Advances in Fractional Calculus}, Springer, Dordrecht, (2007).
%
\bibitem{AF} M. F. Almeida, L. Ferreira,  \emph{Self-similarity, symmetries and asymptotic behavior in Morrey spaces for a fractional wave equation} Differential and Integral Equations, \textbf{25} (2012),  957--976.
%
\bibitem{AV} B. Andrade  and A. Viana, \emph{On a fractional reaction–diffusion equation}, Z. Angew. Math. Phys., June 2017, 68:59.
%
\bibitem{AMAG}  J. M. Angulo, M. D. Ruiz-Medina, V. V. Anh and W. Grecksch, \emph{Fractional Diffusion and Fractional Heat Equation}, Advances in Applied Probability \textbf{32} (2000), 4, 1077--1099.
%
\bibitem{BG} R. Blumenthal and R. Getoor,   Some theorems on stable processes,  Trans. Amer. Math. Soc. 95 (1960), 263--273.
%
\bibitem{B} H. Brunner, \emph{Volterra Integral Equations: An Introduction to Theory and Applications}, Cambridge Monographs on Applied and Computational Mathematics, (2017).
%
\bibitem{CDW} T. Cazenave, F. Dickstein, F.B. Weissler, \emph{An equation whose Fujita critical exponent is not given by scaling}, Nonlinear Analysis, \textbf{68} (2008), 862--874.
%\bibitem{Cui} S. Cui, \emph{Local and global existence of solutions to semilinear parabolic initial value problems}, Nonlinear Anal. \textbf{43} (2001) 293--323.
%
%\bibitem{DA14} M. D'Abbicco, \emph{The influence of a nonlinear memory on the damped wave equation}, Nonlinear Analysis \textbf{95} (2014), 130--145, doi:10.1016/j.na.2013.09.006.
%
%\bibitem{DA14NODEA} M. D'Abbicco, \emph{A wave equation with structural damping and nonlinear memory}, Nonlinear Differential Equations and Applications, \textbf{21}, 5 (2014), 751--773, doi:10.1007/s00030-014-0265-2.
%
\bibitem{DA14} M. D'Abbicco, \emph{The influence of a nonlinear memory on the damped wave equation}, Nonlinear Analysis \textbf{95} (2014), 130--145.
%
\bibitem{DA14proc} M. D'Abbicco, \emph{A benefit from the $L^1$ smallness of initial data for the semilinear wave equation with structural damping}, in Current Trends in Analysis and its Applications (series: Trends in Mathematics), Proceedings of the 9th ISAAC Congress, \textbf{2015}, 209--216, Krakow. Eds V. Mityushev and M. Ruzhansky, Birkh\"auser Basel; ISBN 978-3-319-12576-3.
%
\bibitem{DA15} M. D'Abbicco, \emph{The threshold of effective damping for semilinear wave equations}, Math. Methods Appl. Sci., \textbf{38} (2015), 1032--1045, http://dx.doi.org/10.1002/mma.3126.
%
%\bibitem{DA14NODEA} M. D'Abbicco, \emph{A wave equation with structural damping and nonlinear memory}, Nonlinear Differential Equations and Applications, \textbf{21}, 5 (2014), 751--773, doi:10.1007/s00030-014-0265-2.
\bibitem{DA+} M. D'Abbicco, \emph{A test function method for nonexistence of solutions to Cauchy type problems with fractional derivatives}, in preparation.
%
\bibitem{DAEP16} M. D'Abbicco, M.R. Ebert, T. Picon, \emph{Long time decay estimates in real Hardy spaces for evolution equations with structural dissipation}, Journal of Pseudo-differential Operators and Applications \textbf{7} (2016), 2, 261--293.
%
\bibitem{DAEPproc} M. D'Abbicco, M.R. Ebert, T. Picon,  Global Existence of Small Data Solutions to the Semilinear Fractional Wave Equation. Trends in Mathematics. 1ed.: Springer International Publishing, 2017,  465--471.
%
\bibitem{DAL15} M. D'Abbicco, S. Lucente, \emph{NLWE with a special scale-invariant damping in odd space dimension}, Discr. Cont. Dynamical Systems, AIMS Proceedings, 2015, 312--319, http://dx.doi.org/10.3934/proc.2015.0312.
%
\bibitem{DALR13} M. D'Abbicco, S. Lucente, M. Reissig, \emph{Semilinear wave equations with effective damping}, Chinese Ann. Math., \textbf{34B} (2013) 3, 345--380, doi:10.1007/s11401-013-0773-0.
%
\bibitem{DALR15} M. D'Abbicco, S. Lucente, M. Reissig, \emph{A shift in the critical exponent for semilinear wave equations with a not effective damping}, Journal of Differential Equations \textbf{259}, 2015, 5040--5073, http://dx.doi.org/10.1016/j.jde.2015.06.018.
%
\bibitem{DARMMAS} M. D'Abbicco, M. Reissig, \emph{Semilinear structural damped waves}, Math. Methods Appl. Sci. 37 (2014) 1570–1592.
%
%\bibitem{D} J. F. Douglas, \emph{Some applications of fractional calculus to polymer science}, Advances in chemical physics, Vol 102, John Wiley \& Sons, Inc.
%
\bibitem{DKR} Pham Trieu Duong, M. Kainane, M. Reissig, \emph{Global existence for semi-linear structurally
damped $\sigma$-evolution models}, J. Math. Anal. Appl. 431 (2015), 569--596.
%
\bibitem{EKP} M. R. Ebert, R. A. Kapp, T. Picon, \emph{$L^1-L^p$  estimates for radial solutions of the wave equation  and application}, Annali di Matematica Pura ed Applicata 195 (2016), 1081--1091.
%
%\bibitem{FD} Z. E. A. Fellah, C.Depollier, \emph{Application of fractional calculus to the sound waves propagation in rigid porous materials: Validation via ultrasonic measurement}, Acta Acustica vol.88 (2002) 34--39.
%

\bibitem{FDLS}  Z. E. A. Fellah, C. Depollier, W. Lauriks, N. Sebaa,  \emph{Application of fractional calculus to ultrasonic wave propagation in human cancellous bone}, Signal Processing archive Volume 86 , Issue 10 (2006) 2668--2677.
%
\bibitem{Fujita} H. Fujita, \emph{On the blowing-up of solutions of the Cauchy problem for $u_t=\Delta u+u^{1+\alpha}$}, J. Fac. Sci. Univ. of Tokyo, Sect. 1, 13 (1966), 109--124.
%
\bibitem{F} Y. Fujita, \emph{Integrodifferential equation which interpolates the heat equation and the wave equation}, Osaka J. Math. \textbf{27} (1990), 309--321.

\bibitem{GLS} V. Georgiev, H. Lindblad,  C. D. Sogge, \emph{Weighted Strichartz estimates and global existence for semilinear wave equations},
Amer. J. Math., \textbf{119} (1997), 1291--1319.

\bibitem{G} R.T. Glassey, \emph{Finite-time blow-up for solutions of nonlinear wave equations}, Math. Z., \textbf{177} (1981), 323--340.

\bibitem{G2} R.T. Glassey, \emph{Existence in the large for $\Box u= F (u)$ in two space dimensions},  Math Z., \textbf{178} (1981), 233--261.

%\bibitem{GKMR} R. Gorenflo, A. Kilbas, F. Mainardi and S. Rogosin,\emph{ Mittag-Leffler Functions, Related Topics and Applications}, Springer Monographs in Mathematics, (2014)
%
\bibitem{GP} M.E. Gurtin and A.C. Pipkin, \emph{A genreal theory of heat conduction with finite wave speed}, Arch. Rational Mech. Anal. \textbf{31} (1968), 40--50.
%
 \bibitem{Hayakawa} K. Hayakawa, \emph{On nonexistence of global solutions of some semi-linear parabolic differential equations}, Proc. Japan Acad. 49 (1973), 503--505.
 %
\bibitem{HC} H. Hitoshi and M. Changxing, \emph{ Space-time estimates of linear flow and application to some nonlinear integro-differential equations corresponding to fractional-order time derivative}, Adv. Differential Equations \textbf{7} (2002), 217--236.
%
\bibitem{IMN04} R. Ikehata, Y. Mayaoka, T. Nakatake, \emph{Decay estimates of solutions for dissipative wave equations in $\mathbb{R}^N$ with lower power nonlinearities}, J. Math. Soc. Japan, \textbf{56} (2004), 2, 365--373.
%
\bibitem{JZ} H. Jiao, Z. Zhou, \emph{An elementary proof of the blow-up for semilinear wave equation in high space dimensions}, Journal Differential Equations, \textbf{189} (2003), 355--365.
%
\bibitem{J} F. John, \emph{Blow-up of solutions of nonlinear wave equations in three space dimensions}, Manuscripta Math. 28 (1979), 235--268.
%
 \bibitem{K} T. Kato,  \emph{Blow-up of solutions of some nonlinear hyperbolic equations}, Comm. Pure Appl. Math. 33 (1980), 501--505.
%
\bibitem{KST} A. A.  Kilbas, H. M. Srivastava, J. J. Trujillo, \emph{Theory and applications of fractional differential equations}, North-Holland Mathematics Studies, 204. Elsevier Science B.V., Amsterdam, 2006.
%
 \bibitem{Kobayashietal} K. Kobayashi, T. Sirao, H. Tanaka, \emph{On the growing up problem for semi-linear heat equations}, J. Math. Soc. Japan 29 (1977), 407--424.
 %
 \bibitem{KL} V. V. Kulish and Jos\'e L. Lage, \emph{Application of Fractional Calculus to Fluid Mechanics}, J. Fluids Eng.,  Vol, 124, Issue 3, (2002) 803 (4 pages)
%
\bibitem{LNZ12} J. Lin, K. Nishihara, J. Zhai, \emph{Critical exponent for the semilinear wave equation with time-dependent damping}, Discrete Contin. Dyn. Syst., \textbf{32} (2012) 12, 4307--4320, doi:10.3934/dcds.2012.32.4307.
%
%\bibitem{R1} R. Magin, \emph{Fractional Calculus in Bioengineering}, Part 1, Crit Rev Biomed Eng ; 32(1), (2004) 1-104
%
\bibitem{R2} R. Magin, \emph{Modeling the Cardiac Tissue Electrode Interface Using Fractional Calculus}, Journal of Vibration and Control, Vol. 14, No. 9-10, (2008) 1431- 1442.
%
 \bibitem{MT} T. Margulies,  \emph{ Wave propagation in viscoelastic horns using a fractional calculus rheology model}, Acoustical Society of America Journal, Volume 114, Issue 4, (2003) 2442--2442 .
%
\bibitem{MMOC} B. Mathieu, P. Melchior, A. Oustaloup, Ch. Ceyral, \emph{Fractional differentiation for edge detection}, Fractional Signal Processing and Applications Volume 83, Issue 11, (2003) 2285-2480.
%
\bibitem{M} R. K. Miller, \emph{An integrodifferential equation for rigid heat conductors with memory}, J. Math. Anal. Appl. \textbf{66} (1978) 313--332.
%
\bibitem{N04} T. Narazaki, \emph{$L^p-L^q$ estimates for damped wave equations and their applications to semilinear problem}, J. Math. Soc. Japan \textbf{56} (2004), 586--626.
%
\bibitem{PS} A. Yu. Popov and A. M. Sedletskii, \emph{Distribution of roots of Mittag-Leffler functions}, J. Math. Sci., 190 (2013),  209--409.
%
\bibitem{R} M. Riesz, \emph{L-int\`egrale de Riemann-Liouville et le problem de Cauchy}, Acta Math.
%
%\bibitem{PZ} S. Pezat, J. Zabczyk, \emph{Nonlinear stochastic wave and heat equations}, Prob Theory Relat Fields, 116 (2000), 421--443.
%
%\bibitem{Pod} I. Podlubny, \emph{Fractional Differential Equations}, Mathematics in Science and Engineering, \textbf{198}, Academic Press. San Diego, California, USA. ISBN 0-12 55840-2.
%
%\bibitem{SAT} J. Sabatier, O. P. Agrawal and J. T. Machado, \emph{Advances in Fractional Calculus}, Springer, Dordrecht, 2007.
%
\bibitem{Sc}  J. Schaeffer, \emph{The equation $u_{tt}-\Delta u=|u|^p$ for the critical value of $p$}, Proc. Roy. Soc. Edinburgh Sect. A, \textbf{101} (1985), 31--44.
%
\bibitem{SW} W.R. Schneider and W. Wyss, \emph{Fractional diffusion and wave equations}, J. Math. Phys. \textbf{30} (1939), 134--144.
%
\bibitem{Si} T.C. Sideris, \emph{Nonexistence of global solutions to semilinear wave equations in high dimensions}, J. Differential Equations, \textbf{52} (1984), 378--406.
%
%\bibitem{Pod} I. Podlubny, \emph{Fractional Differential Equations}, Mathematics in Science and Engineering, \textbf{198}, Academic Press. San Diego, California, USA. ISBN 0-12 55840-2.
81 (1949), 1--223.
%
%\bibitem{Samko} Samko [729]
%
%\bibitem{SKM} S. G. Samko, A. A. Kilbas, O. I. Marichev, \emph{Fractional integrals and derivatives, Theory and Applications}. Gordon and Breach Science Publishers, 1987.
%
%\bibitem{Stein} Stein 1970 Singular Integrals and Differentiability Properties of Functions (Princeton, 1970)
%
\bibitem{Strauss} W. Strauss, \emph{Nonlinear scattering theory at low energy}, J. Funct. Anal. 41 (1981), 110--133.
%
%\bibitem{Segal} I.E. Segal, \emph{Dispersion for non-linear relativistic equations, II}, Ann. Sci. Ecole Norm. Sup. \textbf{1} (1968), n.4, 459--497.
%
\bibitem{T} D. Tataru, \emph{Strichartz estimates in the hyperbolic space and global existence for the semilinear wave equation}, Trans. Amer. Math. Soc., \textbf{353} (2001), 795--807.
%
\bibitem{TY01} G. Todorova, B. Yordanov, \emph{Critical Exponent for a Nonlinear Wave Equation with Damping}, J.  Differential Equations \textbf{174} (2001), 464--489.
%
\bibitem{VK06} A. A. Voroshilov, A. A. Kilbas, \emph{The Cauchy Problem for the Diffusion-Wave Equation with the Caputo Partial Derivative}, Differential Equations \textbf{42} (2006) 5, 638--649, doi:10.1134/S0012266106050041
%
\bibitem{W14} Y. Wakasugi, \emph{Critical exponent for the semilinear wave equation with scale invariant damping}, Fourier analysis (M. Ruzhansky and V. Turunen eds.), Trends in Mathematics, Springer, Basel, 2014, 375--390.
%
\bibitem{YZ06} B.T. Yordanov, Qi S. Zhang, \emph{Finite time blow up for critical wave equations in high dimensions}, J. Func. Anal., \textbf{231} (2006), 361--374.
%
\bibitem{Z01} Qi S. Zhang, \emph{A blow-up result for a nonlinear wave equation with damping: the critical case}, C. R. Acad. Sci. Paris S\'{e}r. I Math. \textbf{333} (2001), 109--114.
%
\bibitem{Z} Zhou, \emph{Cauchy problem for semilinear wave equations in four space dimensions with small initial data}, J. Differential Equations, \textbf{8} (1995), 135--144.
%
\end{thebibliography}
\end{document}